\newtheorem{theorem}{Theorem}[section]
\newtheorem{proposition}{Proposition}[section]
\newtheorem{lemma}[proposition]{Lemma}
\theoremstyle{definition}
\newtheorem{definition}{Definition}[section]
\newtheorem{remark}{Remark}[section]
\newtheorem{notation}{Notation}[section]
\theoremstyle{plain}
\DeclareMathAlphabet{\mathcalligra}{T1}{calligra}{m}{n}
\DeclareFontShape{T1}{calligra}{m}{n}{<->s*[2.2]callig15}{}
\newcommand{\Trandatasize}[1]{\mathring{\updelta}}
\newcommand{\TrandatasizeWithFactor}{\mathring{\updelta}_*}
\newcommand{\gsphere}{g \mkern-8.5mu / }
\newcommand{\mytr}{{\mbox{\upshape{tr}}_{\mkern-2mu \gsphere}}}
\newcommand{\Flatdiv}{\mbox{\upshape div}\mkern 1mu}
\newcommand{\Flatcurl}{\mbox{\upshape curl}\mkern 1mu}
\newcommand{\D}{\mathscr{D}}
\newcommand{\angLap}{ {\Delta \mkern-12mu / \, } }
\newcommand{\Speed}{c}
\newcommand{\Transport}{B}
\newcommand{\Ent}{s}
\newcommand{\GradEnt}{S}
\newcommand{\Densrenormalized}{\uprho}
\newcommand{\Vortrenormalized}{\varOmega}
\newcommand{\CurlofVortrenormalized}{\mathcal{C}}
\newcommand{\DivofEntrenormalized}{\mathcal{D}}
\newcommand{\Fullset}{\mathscr{Z}}
\newcommand{\Tanset}{\mathscr{P}}
\newcommand{\Singletan}{P}
\newcommand{\Lunit}{L}
\newcommand{\uLunit}{\underline{L}}
\newcommand{\Rad}{\breve{X}}
\newcommand{\CoordAng}{\Theta}
\newcommand{\Mult}{T}
\newcommand{\GeoAng}{Y}
\newcommand{\smoothfunction}{\mathrm{f}}
\numberwithin{equation}{subsection}
\begin{document}
\title{A New Formulation of the $3D$ Compressible Euler Equations With Dynamic Entropy: Remarkable Null Structures and Regularity Properties}
\author{Jared Speck$^{* \dagger}$}

\thanks{$^{\dagger}$JS gratefully acknowledges support from NSF grant \# DMS-1162211,
from NSF CAREER grant \# DMS-1454419,
from a Sloan Research Fellowship provided by the Alfred P. Sloan foundation,
and from a Solomon Buchsbaum grant administered by the Massachusetts Institute of Technology.
}

\thanks{$^{*}$Massachusetts Institute of Technology, Cambridge, MA, USA.
\texttt{jspeck@math.mit.edu}}

\begin{abstract}
We derive a new formulation of the $3D$ compressible Euler equations with dynamic entropy
exhibiting remarkable null structures and regularity properties.
Our results hold for an arbitrary equation of state 
(which yields the pressure in terms of the density and the entropy)
in non-vacuum regions where the speed of sound is positive.
Our work is an extension of our prior joint work with J.~Luk, in which we derived a similar new
formulation in the special case of a barotropic fluid, 
that is, when the equation of state depends only on the density. 
The new formulation comprises covariant wave equations for 
the Cartesian components of the velocity and the logarithmic density
coupled to a transport equation for the specific vorticity
(defined to be vorticity divided by density),
a transport equation for the entropy,
and some additional transport-divergence-curl-type equations involving
special combinations of the derivatives of the solution variables.
The good geometric structures in the equations
allow one to use the full power of the vectorfield method
in treating the ``wave part'' of the system.
In a forthcoming application, we will use the new formulation 
to give a sharp, constructive proof of finite-time
shock formation, tied to the intersection of acoustic ``wave''
characteristics, for solutions with nontrivial vorticity and entropy
at the singularity. In the present article, we derive the new formulation
and overview the central role that it plays in the proof of shock formation.

Although the equations are significantly more complicated than they are
in the barotropic case, they enjoy many of same remarkable features including
\textbf{i)} all derivative-quadratic inhomogeneous terms are null
forms relative to the acoustical metric, which is the Lorentzian metric
driving the propagation of sound waves and
\textbf{ii)} the transport-divergence-curl-type equations allow one
to show that the entropy is one degree more differentiable than the velocity
and that the vorticity is exactly as differentiable as the velocity,
which represents a gain of one derivative compared to standard estimates.
This gain of a derivative, which seems to be new
for the entropy, is essential for closing the energy estimates
in our forthcoming proof of shock formation
since the second derivatives of the entropy 
and the first derivatives of the vorticity
appear as inhomogeneous terms in the 
wave equations.



\bigskip

\noindent \textbf{Keywords}:
characteristics;
eikonal equation;
eikonal function;
genuinely nonlinear hyperbolic systems;
null condition;
null hypersurface;
singularity formation;
strong null condition;
vectorfield method;
vorticity;
wave breaking
\bigskip

\noindent \textbf{Mathematics Subject Classification (2010)} Primary: 35L67; Secondary: 35L05, 35L10, 35L15, 35L72, 35Q31,76N10
\end{abstract}

\maketitle

\centerline{\today}

\tableofcontents
\setcounter{tocdepth}{2}

\section{Introduction and summary of main results}
\label{S:INTRO}
Our main result in this article is Theorem~\ref{T:GEOMETRICWAVETRANSPORTSYSTEM}, 
in which we provide a new formulation of the compressible Euler equations with dynamic entropy
that exhibits astoundingly good null structures and regularity properties.
We consider only the physically relevant case of three spatial dimensions, 
though similar results hold in any number of spatial dimensions.
Our results hold for an arbitrary equation of state
in non-vacuum regions where the speed of sound is positive.
By equation of state, we mean the function yielding
the pressure in terms of the density and the entropy.
Our results are an extension of our previous joint work 
with J.~Luk \cite{jLjS2016a}, 
in which we derived a similar new formulation of the
equations in the special case of a barotropic fluid, 
that is, when the equation of state depends only on the density.
Our work \cite{jLjS2016a} was in turn inspired by Christodoulou's remarkable
proofs \cites{dC2007,dCsM2014} of shock formation for small-data 
solutions to the compressible Euler equations
in irrotational (that is, vorticity-free) regions
as well as our prior work \cite{jS2016b} on shock formation for general classes of wave equations; 
we describe these works in more detail below.

A principal application of the new formulation 
is that it serves as the starting
point for our forthcoming work, in which we plan to give a sharp proof of finite-time
shock formation for an open set of initial conditions
without making any symmetry assumptions, 
irrotationality assumption,
or barotropic equation of state assumption.
The forthcoming work will be an extension of our recent work
with J.~Luk \cite{jLjS2016b}, in which we proved a similar shock formation result 
for barotropic fluids in the case of two spatial dimensions.

Our new formulation of the compressible Euler equations
comprises covariant wave equations, transport equations,
and transport-divergence-curl-type equations involving \emph{special combinations} 
of solution variables (see Def.~\ref{D:RENORMALIZEDCURLOFSPECIFICVORTICITY}). 
As we mentioned earlier, the inhomogeneous terms exhibit
good null structures, which we characterize in our second main result,
Theorem~\ref{T:STRONGNULL}. Its proof is quite simple given Theorem~\ref{T:GEOMETRICWAVETRANSPORTSYSTEM}.
As we mentioned above, in \cite{jLjS2016a}, 
we derived a similar new formulation of the equations
under the assumption that the fluid is barotropic.
The barotropic assumption, though often made in 
astrophysics, cosmology, and meteorology, is generally unjustified 
because it entails neglecting thermal dynamics and their effect on the fluid.
Compressible fluid models that are more physically
realistic feature equations of state that depend on the density 
\emph{and} a second thermodynamic state-space variable, such as the temperature, 
which satisfies an evolution equation that is coupled to the other fluid equations.
In the present article, 
we allow for an arbitrary physical equation of state in which,
for mathematical convenience, we have chosen\footnote{For sufficiently regular solutions, 
there are many equivalently formulations of the compressible Euler equations, 
depending on the state-space variables that one chooses as unknowns in the system.} 
the second thermodynamic variable to be the entropy per unit mass 
(which we refer to as simply the ``entropy'' from now on).

\subsection{Paper outline} 
\label{SS:OUTLINE}
In the remainder of Sect.~\ref{S:INTRO},
we provide some standard background material
on the compressible Euler equations,
define the solution variables 
that we use
in formulating our main results,
roughly summarize our main results,
and provide some preliminary context.
In Sect.~\ref{S:LORENTZIANGEOMETRYBASIC},
we define some geometric objects that we use in formulating
our main results and
provide some basic background on Lorentzian geometry
and null forms.
In Sect.~\ref{S:MAINRESULTS},
we give precise statements of our main results, 
namely Theorems~\ref{T:GEOMETRICWAVETRANSPORTSYSTEM} and \ref{T:STRONGNULL},
and give the simple proof of the latter.
In Sect.~\ref{S:OVERVIEWOFROLESOFTHEOREMS},
we overview our forthcoming proof of shock formation,
highlighting the roles that
Theorems~\ref{T:GEOMETRICWAVETRANSPORTSYSTEM} and \ref{T:STRONGNULL}
will play. In Sect.~\ref{S:PROOFOFMAINTHEOREM}, 
we prove Theorem~\ref{T:GEOMETRICWAVETRANSPORTSYSTEM}
via a series of calculations in which
we observe many important cancellations.

\subsection{Notation}
\label{SS:NOTATION}
Throughout $\lbrace x^{\alpha} \rbrace_{\alpha=0,1,2,3}$ denotes
the usual Cartesian coordinate system
on\footnote{In our forthcoming proof of shock formation, we will, for convenience,
consider spacetimes with topology $\mathbb{R} \times \Sigma$, 
where $\Sigma := \mathbb{R} \times \mathbb{T}^2$ is the space manifold; 
see Sect.~\ref{S:OVERVIEWOFROLESOFTHEOREMS}
for an overview. In that context, 
$\lbrace x^{\alpha} \rbrace_{\alpha=0,1,2,3}$ denotes
the usual Cartesian coordinate system on $\mathbb{R} \times \Sigma$,
where $x^0 \in \mathbb{R}$ is the time coordinate,
$x^1$ is a standard spatial coordinate on $\mathbb{R}$,
and $x^2$ and $x^3$ are standard (locally defined) coordinates
on $\mathbb{T}^2$. 
Note that the vectorfields
$
\displaystyle
\partial_{\alpha} 
:=
\frac{\partial}{\partial x^{\alpha}}
$
on $\mathbb{T}^2$
can be extended so as to be globally defined and smooth. \label{FN:CARTESIANCOORDNATESINT2CASE}}
$\mathbb{R}^{1+3} \simeq \mathbb{R} \times \mathbb{R}^3$.
More precisely, $x^0 \in \mathbb{R}$ is the time coordinate and $(x^1,x^2,x^3) \in \mathbb{R}^3$
are spatial coordinates.
$
\displaystyle
\partial_{\alpha} 
:=
\frac{\partial}{\partial x^{\alpha}}
$
denotes the corresponding Cartesian coordinate partial derivative vectorfields.
We often use the alternate notation $x^0 = t$ and $\partial_0 = \partial_t$.
Greek ``spacetime'' indices such as $\alpha$ vary over $0,1,2,3$ and
Latin ``spatial'' indices such as $a$ vary over $1,2,3$.
We use Einstein's summation convention in that repeated indices are summed
over their respective ranges.
$\Sigma_t$ denotes the usual flat hypersurface of constant Cartesian time $t$.

\subsection{Basic background on the compressible Euler equations}
\label{SS:BACKGROUND}
In this subsection, we provide some basic background on the compressible Euler equations.

\subsubsection{Equations of state}
\label{SSS:EOS}
We study the compressible Euler equations
for a perfect fluid in three spatial dimensions
under any equation of state with positive sound speed (see definition \eqref{E:SOUNDSPEED}). 
The equation of state is 
the function (which we assume to be given) 
that determines the pressure $p$ in terms 
of the density $\varrho \geq 0$ and the entropy $\Ent \in \mathbb{R}$:
\begin{align} \label{E:EOS}
	p = p(\varrho,\Ent).
\end{align}
Given the equation of state,
the compressible Euler equations can be formulated as 
evolution equations for the velocity 
$v:\mathbb{R}^{1+3} \rightarrow \mathbb{R}^3$,
the density $\varrho:\mathbb{R}^{1+3} \rightarrow [0,\infty)$,
and the entropy
$\Ent:\mathbb{R}^{1+3} \rightarrow (-\infty,\infty)$.

\subsubsection{Some definitions}
\label{SSS:SOMEDEFS}
We use the following notation\footnote{See Subsect.~\ref{SS:NOTATION}
regarding our conventions for indices and implied summation.} 
for the Euclidean 
divergence and curl of a $\Sigma_t-$tangent vectorfield $V$:
\begin{align} \label{E:FLATDIVANDCURL}
		\Flatdiv V
		& := \partial_a V^a,
			\qquad
		(\Flatcurl V)^i
		:= \epsilon_{iab} \partial_a V^b.
	\end{align}
	In \eqref{E:FLATDIVANDCURL}, $\epsilon_{ijk}$ is the fully antisymmetric
	symbol normalized by
	\begin{align} \label{E:EPSILONNORMALIZATION}
	\epsilon_{123} = 1.
\end{align}
The vorticity $\omega: \mathbb{R}^{1+3} \rightarrow \mathbb{R}^3$ 
is the vectorfield
\begin{align} \label{E:VORTICITYDEFINITION}
	\omega^i 
	& := (\Flatcurl v)^i.
\end{align}

Rather than formulating the equations in terms of the
density and the vorticity,
we find it convenient to use the
\emph{logarithmic density} $\Densrenormalized$
and the \emph{specific vorticity} $\Vortrenormalized$;
some of the equations that we study take a simpler
form when expressed in terms of these variables.

To define these quantities, we first fix a constant background density $\bar{\varrho}$ such that
\begin{align}  \label{E:BACKGROUNDDENSITY}
\bar{\varrho} > 0.
\end{align}
In applications, one may choose any
convenient value\footnote{For example, when studying solutions
that are perturbations of non-vacuum constant states,
one may choose $\bar{\varrho}$ so that 
in terms of the variable $\Densrenormalized$ from \eqref{E:RESCALEDVARIABLES},
the constant state corresponds to $\Densrenormalized \equiv 0$.} 
of $\bar{\varrho}$.

\begin{definition}[\textbf{Logarithmic density and specific vorticity}]
\label{D:RESCALEDVARIABLES}
\begin{align} \label{E:RESCALEDVARIABLES}
	\Densrenormalized
	& := \ln \left(\frac{\varrho}{\bar{\varrho}} \right),
		\qquad
	\Vortrenormalized
	:= \frac{\omega}{(\varrho/\bar{\varrho})}
	= \frac{\omega}{\exp \Densrenormalized}.
\end{align}
\end{definition}
We assume throughout that\footnote{We avoid discussing fluid dynamics 
in regions with vanishing density. The reason is that
the compressible Euler equations become degenerate along fluid-vacuum boundaries 
and not much is known about compressible fluid flow in this context; see, for example \cite{dCsS2012}
for more information.}
\begin{align} \label{E:DENSITYPOSITIVE}
	\varrho > 0.
\end{align}
In particular, the variable $\Densrenormalized$
is finite assuming \eqref{E:DENSITYPOSITIVE}.

In the study of shock formation,
to obtain sufficient top-order regularity for the
entropy, it is important to work with 
the $\Sigma_t$-tangent vectorfield
$\GradEnt$ provided by the next definition;
see Remark~\ref{R:NEEDFORGRADENTANDDIVCURL} for further discussion.

\begin{definition}[\textbf{Entropy gradient vectorfield}]
	\label{D:ENTROPYGRADIENT}
	We define the Cartesian components of the $\Sigma_t$-tangent
	\emph{entropy gradient vectorfield} $\GradEnt$
	as follows, $(i=1,2,3)$:
	\begin{align} \label{E:ENTROPYGRADIENT}
		\GradEnt^i
		& := \delta^{ia} \partial_a \Ent
			= \partial_i \Ent.
	\end{align}
\end{definition}

\begin{remark}[\textbf{The need for $\GradEnt$ and transport-$\Flatdiv$-$\Flatcurl$ estimates in controlling} $\Ent$]
	\label{R:NEEDFORGRADENTANDDIVCURL}
	In our forthcoming proof of shock formation,
	we will control the top-order derivatives of 
	$\Ent$ by combining estimates for transport equations
	with $\Flatdiv$-$\Flatcurl$-type elliptic estimates for $\GradEnt$
	and its higher derivatives. 
	At first glance, it might seem like the $\Flatdiv$-$\Flatcurl$ elliptic estimates
	could be replaced with simpler elliptic estimates for $\Delta \Ent$, in view of
	the simple identity $\Delta \Ent = \Flatdiv \GradEnt$.
	Although this is true for $\Delta \Ent$ itself,
	in our proof of shock formation, 
	the Euclidean Laplacian $\Delta$ is not compatible with the differential operators
	that we must use to commute the equations
	when obtaining estimates for the solution's higher derivatives.
	Specifically, like all prior works on shock formation in
	more than one spatial dimension, 
	our forthcoming proof is based on commuting 
	the equations with geometric vectorfields
	(see Subsect.~\ref{SS:SUMMARYOFPROOF} for an overview) 
	that are adapted to the acoustic 
	wave characteristics of the compressible Euler equations,\footnote{We define these wave characteristics,
	denoted by $\mathcal{P}_u$, in Subsect.~\ref{SS:SHOCKFORMATIONPROOFSUMMARY}.}
	which have essentially nothing to with the operator $\Delta$.
	Therefore, the geometric vectorfields exhibit very poor commutation properties with $\Delta$
	and in fact would generate uncontrollable error terms if commuted with it.
	In contrast, in carrying out our transport-divergence-curl-type estimates, 
	\emph{we only have to commute the geometric vectorfields through first-order operators}
	including the transport operator, $\Flatdiv$, and $\Flatcurl$;
	it turns out that commuting the geometric vectorfields 
	through first-order operators, as long as they are weighted with an appropriate geometric weight,\footnote{Specifically,
	the weight is the inverse foliation density $\upmu$ of the acoustic characteristics; see Def.~\ref{D:UPMUDEF}.} 
	leads to controllable error terms, compatible with following the solution all the way to the singularity.
	We explain this issue in more detail in Steps (1) and (2) of Subsect.~\ref{SS:SUMMARYOFPROOF}.
\end{remark}

\begin{notation}[\textbf{State-space variable differentiation via semicolons}]
	\label{N:STATESPACEDIFFERENTIATION}
	If $f = (\Densrenormalized,\Ent)$ is a scalar function, then
	we use the following notation to denote partial differentiation with respect to
	$\Densrenormalized$ and $\Ent$:
	$
	\displaystyle
	f_{;\Densrenormalized} 
	:= \frac{\partial f}{\partial \Densrenormalized}
	$
	and
	$
	\displaystyle
	f_{;\Ent} 
	:= \frac{\partial f}{\partial \Ent}
	$.
	Moreover, 
	$
	\displaystyle
	f_{;\Densrenormalized;\Ent} 
	:= \frac{\partial^2 f}{\partial \Ent \partial \Densrenormalized}
	$,
	and we use similar notation for other higher partial derivatives of $f$
	with respect to $\Densrenormalized$ and $\Ent$.
\end{notation}

\subsubsection{Speed of sound}
\label{SSS:SPEEDOFSOUND}
The scalar function $\Speed \geq 0$ defined by\footnote{On RHS \eqref{E:SOUNDSPEED},
$
\displaystyle
\frac{\partial p}{\partial \varrho}\left|\right._{\Ent}
$
denotes the derivative of the equation of state with respect to the (non-logarithmic) density $\varrho$
at fixed entropy.}
\begin{align} \label{E:SOUNDSPEED}
	\Speed 
	& := 
	\sqrt{\frac{\partial p}{\partial \varrho}\left|\right._{\Ent}}
		= \sqrt{\frac{1}{\bar{\varrho}}\exp(-\Densrenormalized) p_{;\Densrenormalized}}
\end{align}
is a fundamental quantity known as the \emph{speed of sound}.
To obtain the last equality in \eqref{E:SOUNDSPEED},
we used the chain rule identity
$
\displaystyle
\frac{\partial}{\partial \varrho} \left|\right._{\Ent}
=
\frac{1}{\bar{\varrho}}
\exp(-\Densrenormalized)
\frac{\partial}{\partial \Densrenormalized} \left|\right._{\Ent}
$.
From now on, we view
\begin{align} \label{E:FUNCNSOFDENSANDENT}
	\Speed 
	& = \Speed(\Densrenormalized,\Ent).
\end{align}

\begin{center}
\underline{\textbf{\large Assumptions on the equation of state}}
\end{center}
We make the following physical assumptions,
which ensure the hyperbolicity of the system when
$\varrho > 0$:
\begin{itemize}
	\item $\Speed \geq 0$.
	\item $\Speed > 0$ when $\varrho > 0$. Equivalently, $\Speed > 0$ whenever $\Densrenormalized \in (-\infty,\infty)$.
\end{itemize}

\subsubsection{A standard first-order formulation of the compressible Euler equations}
\label{SSS:FIRSTORDERFORMULATIONOFEULER}
We now state a standard first-order formulation of the compressible Euler equations;
these equations are the starting point of our new formulation.
Specifically, relative to Cartesian coordinates, the compressible Euler equations
can be expressed\footnote{Throughout, if $V$ is a vectorfield
and $f$ is a function, then $Vf := V^{\alpha} \partial_{\alpha} f$
denotes the derivative of $f$ in the direction $V$.
\label{F:VECTORFIELDSACTONFUNCTIONS}} 
as follows:
\begin{subequations}
\begin{align} \label{E:TRANSPORTDENSRENORMALIZEDRELATIVETORECTANGULAR}
	\Transport \Densrenormalized
	& = - \Flatdiv v,
		\\
	\Transport v^i 
	& = 
	- \Speed^2 \delta^{ia} \partial_a \Densrenormalized
	- \exp(-\Densrenormalized) \frac{p_{;\Ent}}{\bar{\varrho}} \delta^{ia} \partial_a \Ent,
	\label{E:TRANSPORTVELOCITYRELATIVETORECTANGULAR}
		\\
	\Transport \Ent
	& = 0.
	\label{E:ENTROPYTRANSPORT}
\end{align}
\end{subequations}
Above and throughout, $\delta^{ab}$ is the standard Kronecker delta
and
\begin{align} \label{E:MATERIALVECTORVIELDRELATIVETORECTANGULAR}
	\Transport 
	& := 
	\partial_t 
	+ 
	v^a \partial_a
\end{align}
is the material derivative vectorfield.
We note that $\Transport$ plays a critical role in the ensuing discussion.
Readers may consult, for example, \cite{dCsM2014} for discussion behind the physics of the equations
and for a first-order formulation of them in terms of $\varrho$, $\lbrace v^i \rbrace_{i=1,2,3}$, and $\Ent$,
which can easily seen to be equivalent to 
\eqref{E:TRANSPORTDENSRENORMALIZEDRELATIVETORECTANGULAR}-\eqref{E:ENTROPYTRANSPORT}.

\subsubsection{Modified fluid variables}
\label{SSS:MODIFIEDFLUIDVARIABLES}
Although it is not obvious, the quantities provided in the following definition
satisfy transport equations with a good structure;
see \eqref{E:EVOLUTIONEQUATIONFLATCURLRENORMALIZEDVORTICITY}
and \eqref{E:TRANSPORTFLATDIVGRADENT}.
When combined with elliptic estimates, 
the transport equations allow one to prove that 
the specific vorticity and entropy are one degree more
differentiable than naive estimates would yield.
This gain of regularity is essential 
in our forthcoming proof of shock formation since it is needed to control 
some of the source terms in the wave equations for the velocity and density,
specifically, the first products on RHSs \eqref{E:VELOCITYWAVEEQUATION}-\eqref{E:RENORMALIZEDDENSITYWAVEEQUATION}.
In addition, the source terms in the transport equations 
have a good null structure, which is also 
essential in the study of shock formation.
We discuss these issues in more detail in Sect.~\ref{S:OVERVIEWOFROLESOFTHEOREMS}.

\begin{definition}[\textbf{Modified fluid variables}]
	\label{D:RENORMALIZEDCURLOFSPECIFICVORTICITY}
	We define the Cartesian components of the $\Sigma_t$-tangent vectorfield $\CurlofVortrenormalized$ 
	and the scalar function $\DivofEntrenormalized$ as follows,
	($i=1,2,3$):
	\begin{subequations}
	\begin{align} \label{E:RENORMALIZEDCURLOFSPECIFICVORTICITY}
		\CurlofVortrenormalized^i
		& :=
			\exp(-\Densrenormalized) (\Flatcurl \Vortrenormalized)^i
			+
			\exp(-3\Densrenormalized) \Speed^{-2} \frac{p_{;\Ent}}{\bar{\varrho}} \GradEnt^a \partial_a v^i
			-
			\exp(-3\Densrenormalized) \Speed^{-2} \frac{p_{;\Ent}}{\bar{\varrho}} (\partial_a v^a) \GradEnt^i,
				\\
		\DivofEntrenormalized
		& := 
			\exp(-2 \Densrenormalized) \Flatdiv \GradEnt 
			-
			\exp(-2 \Densrenormalized) \GradEnt^a \partial_a \Densrenormalized.
			\label{E:RENORMALIZEDDIVOFENTROPY}
	\end{align}
	\end{subequations}
\end{definition}

\subsection{A brief summary of our main results}
\label{SS:BRIEFSUMMARYOFMAINRESULTS}

\begin{changemargin}{.25in}{.25in} 
\textbf{Summary of the main results.}
	The compressible Euler equations
	can be reformulated as a system of covariant wave equations for 
	the Cartesian components $\lbrace v^i \rbrace_{i=1,2,3}$ of the velocity 
	and the logarithmic density $\Densrenormalized$
	coupled to a transport equation for the entropy $\Ent$,
	transport equations for the Cartesian components 
	$\lbrace \GradEnt^i \rbrace_{i=1,2,3}$ of the entropy gradient,
	transport equations for the Cartesian components $\lbrace \Vortrenormalized^i \rbrace_{i=1,2,3}$ 
	of the specific vorticity,
	transport equations for the modified fluid variables of Def.~\ref{D:RENORMALIZEDCURLOFSPECIFICVORTICITY},
	and identities for $\Flatdiv \Vortrenormalized$ and $(\Flatcurl \GradEnt)^i$;
	see Theorem~\ref{T:GEOMETRICWAVETRANSPORTSYSTEM} on pg.~\pageref{T:GEOMETRICWAVETRANSPORTSYSTEM}
	for the equations.
	Moreover, the inhomogeneous terms exhibit remarkable structures, 
	\emph{including good null form structures tied to the acoustical metric $g$}
	(which is the Lorentzian metric corresponding to the propagation of sound waves, see Def.~\ref{D:ACOUSTICALMETRIC}); 
	see Theorem~\ref{T:STRONGNULL} on pg.~\pageref{T:STRONGNULL}
	for the precise statement.
\end{changemargin}

\subsection{Some preliminary context for the main results}
\label{SS:CONTEXT}
In this subsection, we provide some preliminary context for our main results,
with a focus on the special null structures
exhibited by the inhomogeneous terms in our new formulation of the compressible Euler equations
and their relevance for our forthcoming proof of shock formation.
The presence of special null structures in the equations
might seem surprising since they are often
associated with equations that admit global solutions. However, 
as we explain below, 
the good null structures are in fact key to proving that the shock forms.
Several works have contributed to our understanding of the important role
that the null structures play in the proof of shock formation,
including \cites{dC2007,gHsKjSwW2016,jS2016b,jLjS2016a,jLjS2016b}.
Here we review these works and some related ones
and, for the results in more than one spatial dimension, 
we highlight the role that the presence of good geo-analytic 
and null structures played in the proofs.


The famous work of Riemann \cite{bR1860}, in which he invented the Riemann invariants,
yielded the first general proof of shock formation for solutions to 
the compressible Euler equations in one
spatial dimension. More precisely, for such solutions,
the velocity and density remain bounded even though their first-order
Cartesian coordinate partial derivatives blow up in finite time.
The most standard proof of this phenomenon
is elementary and is essentially based on identifying a
Riccati-type blowup mechanism for the solution's first derivatives;
see Subsect.~\ref{SS:SIMPLEPLANEWAVESBLOWUP} for a review of these
ideas in the context of simple plane wave solutions.

In all prior proofs of shock formation in more than one spatial dimension,
there also was a Riccati-type mechanism that drives the blowup.
However, in the analysis, the authors encountered many new kinds of error terms that
are much more complicated than the ones encountered by Riemann.
A key aspect of the proofs was showing that the additional error terms
do not interfere with the Riccati-type blowup mechanism.
This is where the special null structure mentioned above enters into play: 
terms that enjoy the special null structure are weak compared to the Riccati-type terms that drive the singularity,
at least near the shock. In order to explain this in more detail,
we now review some prior works on shock formation in more than one spatial dimension.

Alinhac was the first \cites{sA1999a,sA1999b,sA2001b,sA2002}
to prove shock formation results for quasilinear hyperbolic PDEs
in more than one spatial dimension. Specifically, in two and three spatial dimensions,
he proved shock formation results for scalar
quasilinear wave equations of the form\footnote{Alinhac's equations were perturbations
of the linear wave equation in the sense that $g_{\alpha \beta}(\partial \Phi = 0) = m_{\alpha \beta}$, 
where $m$ is the Minkowski metric, e.g.
$m_{\alpha \beta} = \mbox{\upshape diag}(-1,1,1,1)$ three spatial dimensions.}
\begin{align} \label{E:IRROTATIONALFLUIDEQUATIONFIRSTFORM}
(g^{-1})^{\alpha \beta}(\partial \Phi) \partial_{\alpha} \partial_{\beta} \Phi = 0
\end{align}
whenever the nonlinearities fail to satisfy the null condition\footnote{Klainerman formulated the null condition
in three spatial dimensions \cite{sK1984}, while Alinhac formulated it in two spatial dimensions \cite{sA2001b}. 
For equations of type
\eqref{E:IRROTATIONALFLUIDEQUATIONFIRSTFORM}, the difference is that 
in three spatial dimensions,
the definition of the null condition involves only the structure of the 
quadratic part $\partial \Phi \cdot \partial^2 \Phi$ of the nonlinearities
(obtained by Taylor expansion)
while in two spatial dimensions,
it also involves the cubic part
$\partial \Phi \cdot \partial \Phi \cdot \partial^2 \Phi$.}
and the data are small, smooth, compactly supported, 
and verify a non-degeneracy condition.
Although Alinhac's work significantly advanced our understanding
of singularity formation in solutions to quasilinear wave equations,
the most robust and precise framework for proving shock formation
in solutions to quasilinear wave equations 
was developed by Christodoulou in his groundbreaking work \cite{dC2007}.
More precisely, in \cite{dC2007}, Christodoulou proved a small-data shock formation
result for irrotational solutions to the equations of compressible relativistic
fluid mechanics. In the irrotational case, the equations are equivalent to an
Euler-Lagrange equation for a potential function $\Phi$,
which can be expressed in the form \eqref{E:IRROTATIONALFLUIDEQUATIONFIRSTFORM}.
Christodoulou's sharp geometric framework relied on a reformulation of 
the wave equation \eqref{E:IRROTATIONALFLUIDEQUATIONFIRSTFORM}
that exhibits good geo-analytic structures (see equation \eqref{E:CHRISTODOULOUSHOMOGENEOUSCOVARIANTWAVEEQUATION}),
and his approach yielded information that is not accessible via
Alinhac's approach. In particular, Christodoulou's framework
is able to reveal information about the structure of the maximal
classical development\footnote{Roughly, the maximal classical development 
		is the largest possible classical solution that is uniquely determined by the data; see, for example, 
		\cites{jSb2016,wW2013} for further discussion.}  
of the initial data, all the way up to the boundary,
information that is essential for properly setting up the \emph{shock development problem} in compressible fluid mechanics. 
Roughly, the shock development problem is the problem of weakly continuing the solution past the singularity
under suitable jump conditions. We note that even if the data are irrotational,
vorticity can be generated to the future of the first singularity.
Thus, in the study of the shock development problem, one must consider the full compressible Euler equations.
The shock development problem remains open in full generality and is expected
to be very difficult. However, Christodoulou--Lisibach recently made important progress \cite{dCaL2016}: 
they solved it in spherical symmetry in the relativistic case.

Christodoulou's shock formation results 
for the irrotational relativistic compressible Euler equations
were extended to the non-relativistic irrotational compressible Euler equations by Christodoulou--Miao in \cite{dCsM2014},
to general classes of wave equations 
\cite{jS2016b} by the author, 
and to other solution regimes in
\cites{jSgHjLwW2016,sMpY2017,sM2016}.
Readers may consult the survey article \cite{gHsKjSwW2016} for an extended overview of 
some of these works.
Of the above works, the ones \cites{dC2007,dCsM2014} are most relevant for the present article.
In those works, the authors proved 
\emph{small-data} shock formation results for the 
compressible Euler equations in irrotational regions
by studying the wave equation for the potential function $\Phi$.
The wave equation can be written in the (non-Euler-Lagrange) form \eqref{E:IRROTATIONALFLUIDEQUATIONFIRSTFORM}
relative to Cartesian coordinates,\footnote{In discussing \cite{dC2007},
it would be better for us to call them ``rectangular coordinates'' 
since the equations there are introduced in the context of special relativity, and
the Minkowski metric takes the ``rectangular'' form $\mbox{diag}(-1,1,1,1)$ relative to these coordinates.}
where the Cartesian components
$g_{\alpha \beta} = g_{\alpha \beta}(\partial \Phi)$ are determined by the fluid equation of state.
In the context of fluid mechanics, the Lorentzian metric $g$ in \eqref{E:IRROTATIONALFLUIDEQUATIONFIRSTFORM}
is known as the acoustical metric (since it drives the propagation of sound waves).
We note that the acoustical metric also plays a fundamental role in the main results of this article
(see Def.~\ref{D:ACOUSTICALMETRIC}), even when the vorticity is non-zero.

A simple but essential step in Christodoulou's proof of shock formation
was to differentiate the wave equation \eqref{E:IRROTATIONALFLUIDEQUATIONFIRSTFORM}
with the Cartesian coordinate partial derivative vectorfields $\partial_{\nu}$,
which led to the following system of \emph{covariant} wave equations:
\begin{align} \label{E:CHRISTODOULOUSHOMOGENEOUSCOVARIANTWAVEEQUATION}
	\square_{\widetilde{g}(\vec{\Psi})} \Psi_{\nu}
	& = 0.
\end{align}
In \eqref{E:CHRISTODOULOUSHOMOGENEOUSCOVARIANTWAVEEQUATION},
$\vec{\Psi} := (\Psi_0,\Psi_1,\Psi_2,\Psi_3)$ is an array of scalar functions, 
$\Psi_{\nu} := \partial_{\nu} \Phi$ (with $\partial_{\nu}$ denoting a Cartesian coordinate partial derivative), 
$\widetilde{g}$ is a Lorentzian metric conformal to $g$,
$\square_{\widetilde{g}(\vec{\Psi})}$ is the covariant wave operator of $\widetilde{g}$
(see Def.~\ref{D:COVWAVEOP}), and $\Psi_{\nu}$ is treated 
as a scalar function under covariant differentiation in \eqref{E:CHRISTODOULOUSHOMOGENEOUSCOVARIANTWAVEEQUATION}.
A key feature of the system \eqref{E:CHRISTODOULOUSHOMOGENEOUSCOVARIANTWAVEEQUATION}
is that all of the terms that drive the shock formation are on the left-hand side, 
hidden in the lower-order terms generated by the operator $\square_{\widetilde{g}(\vec{\Psi})}$.
That is, if one expands $\square_{\widetilde{g}(\vec{\Psi})} \Psi_{\nu}$ relative to the 
standard Cartesian coordinates, one encounters Riccati-type terms\footnote{In reality, what blows up
is a specific tensorial component of $\partial \vec{\Psi}$; the tensorial structure in the problem
is rather intricate.}
of the schematic form
$\partial \vec{\Psi} \cdot \partial \vec{\Psi}$ that drive the blowup of a certain tensorial component of
$\partial \vec{\Psi}$, while $\vec{\Psi}$ itself remains uniformly bounded up to the singularity;
roughly, this is what it means for solutions to \eqref{E:CHRISTODOULOUSHOMOGENEOUSCOVARIANTWAVEEQUATION} 
to form a shock. Readers may consult Subsect.~\ref{SS:SIMPLEPLANEWAVESBLOWUP}
for a more detailed description of how the Riccati-type terms lead to blowup for simple isentropic
plane wave solutions to the compressible Euler equations.

The presence of a covariant wave operator on LHS~\eqref{E:CHRISTODOULOUSHOMOGENEOUSCOVARIANTWAVEEQUATION}
was crucial for Christodoulou's analysis. The reason is that he was able to construct,
with the help of an eikonal function (see Subsect.~\ref{SS:SHOCKFORMATIONPROOFSUMMARY}), 
a collection of geometric, \emph{solution-dependent} 
vectorfields that enjoy good commutation properties with $\square_{\widetilde{g}(\vec{\Psi})}$.
He then used the vectorfields to differentiate the wave equations and to obtain estimates for the
solution's higher derivatives, much like in his celebrated proof \cite{dCsK1993}, joint with Klainerman,
of the dynamic stability of Minkowski spacetime as a solution to the Einstein-vacuum equations.
Indeed, in more than one spatial dimension, 
the main technical challenge in the proof of shock formation 
is to derive sufficient energy estimates for the geometric vectorfield derivatives of the solution
that hold all the way up to the singularity.
In the context of shock formation, this step is exceptionally technical 
and we discuss it in more detail in Sect.~\ref{S:OVERVIEWOFROLESOFTHEOREMS}.
It is important to note that the standard Cartesian coordinate partial derivatives
$\partial_{\nu}$
generate uncontrollable error terms when commuted through $\square_{\widetilde{g}(\vec{\Psi})}$
and thus the geometric vectorfields and the operator $\square_{\widetilde{g}(\vec{\Psi})}$
are essential ingredients in the proof.

In \cite{jS2016b}, we showed that if one considers a general wave equation of type
\eqref{E:IRROTATIONALFLUIDEQUATIONFIRSTFORM},
not necessarily of the Euler-Lagrange type considered by Christodoulou \cite{dC2007} and Christodoulou--Miao \cite{dCsM2014},
then upon differentiating it with $\partial_{\nu}$,
one does not generate a system of type \eqref{E:CHRISTODOULOUSHOMOGENEOUSCOVARIANTWAVEEQUATION},
but rather an inhomogeneous system of the form
\begin{align} \label{E:JAREDSINHOMOGENEOUSCOVARIANTWAVEEQUATION}
	\square_{g(\vec{\Psi})} \Psi_{\nu}
	& = \smoothfunction(\vec{\Psi})\mathfrak{Q}(\partial \vec{\Psi},\partial \Psi_{\nu}),
\end{align}
where 
$\smoothfunction$ is smooth and
$\mathfrak{Q}$ is a \emph{standard null form relative to the acoustical metric $g$}; see Def.~\ref{D:NULLFORMS}.
We then showed that the null forms relative to $g$ have precisely the right structure such that
they do not interfere with or prevent the shock formation processes, at least for suitable data.
The $\mathfrak{Q}$ are canonical examples of terms that enjoy the good null structure 
that we mentioned at the beginning of this subsection.
More generally, we refer to the good null structure as the \emph{strong null condition}; see Def.~\ref{D:STRONGNULLCONDITION} and
Prop.~\ref{P:STANDARDNULLFORMSSATISFYSTRONGNULL}.
We stress that the full nonlinear structure of the null forms $\mathfrak{Q}$ is critically important.
This is quite different from the famous null condition identified by Klainerman \cite{sK1984}
in his study of wave equations in three spatial dimensions
that enjoy small-data global existence; 
in Klainerman's formulation of the null condition,
the structure of cubic and higher order terms is not even taken into consideration 
since, in the small-data regime that he studied,
wave dispersion causes the cubic terms to decay fast enough 
that their precise structure is typically not important.
The reason that the full nonlinear structure of the null forms $\mathfrak{Q}$
is of critical importance in the study of shock formation is that
they are adapted to the acoustical metric $g$ and enjoy the following
key property: each $\mathfrak{Q}$ is \emph{linear} in the tensorial component of $\partial \vec{\Psi}$
that blows up. Therefore, near the singularity, $\mathfrak{Q}$ 
is small relative to the quadratic terms $\partial \vec{\Psi} \cdot \partial \vec{\Psi}$
that drive the singularity formation (which we again stress are hidden in the definition of 
$\square_{g(\vec{\Psi})} \Psi_{\nu}$).
Roughly, this linear dependence is the crux of the strong null condition.
In contrast, a typical quadratic inhomogeneous term $\partial \vec{\Psi} \cdot \partial \vec{\Psi}$,
if present on RHS~\eqref{E:JAREDSINHOMOGENEOUSCOVARIANTWAVEEQUATION}, would distort the dynamics
near the singularity and could in principle prevent it from forming or change its nature.
Moreover, in the context of shock formation,
cubic or higher-order terms such as 
$\partial \vec{\Psi} \cdot \partial \vec{\Psi} \cdot \partial \vec{\Psi}$ 
are expected to become dominant in regions where $\partial \vec{\Psi}$ is large
and it is therefore critically important that there are no such terms on RHS~\eqref{E:JAREDSINHOMOGENEOUSCOVARIANTWAVEEQUATION}.
These observations suggest that proofs of shock formation are less stable under perturbations
of the equations compared to more familiar perturbative proofs of global existence.

The equations in our new formulation of the compressible Euler equations (see Theorem~\ref{T:GEOMETRICWAVETRANSPORTSYSTEM})
are drastically more complicated than the homogeneous wave equations \eqref{E:CHRISTODOULOUSHOMOGENEOUSCOVARIANTWAVEEQUATION}
that Christodoulou encountered in his study of irrotational compressible fluid mechanics
and the inhomogeneous equations \eqref{E:JAREDSINHOMOGENEOUSCOVARIANTWAVEEQUATION} that we encountered in
\cite{jS2016b}. The equations of Theorem~\ref{T:GEOMETRICWAVETRANSPORTSYSTEM} 
are even considerably more complicated than the equations we derived in
\cite{jLjS2016a} in our study of the barotropic fluids with vorticity.
However, they exhibit many of the same good structures as the equations of \cite{jLjS2016a}
as well as some remarkable new ones. Specifically, in the present article,
we derive geometric equations whose inhomogeneous terms are either null forms
similar to the ones on RHS~\eqref{E:JAREDSINHOMOGENEOUSCOVARIANTWAVEEQUATION}
or less dangerous terms that are at most linear in the solution's derivatives.
We find this presence of this null structure to be somewhat miraculous in view of the 
sensitivity of proofs of shock formation under perturbations of the equations
that we described in the previous paragraph.
Moreover, in Theorem~\ref{T:GEOMETRICWAVETRANSPORTSYSTEM},
we also exhibit special combinations of the solution variables 
that solve equations with good source terms, 
which, when combined with elliptic estimates,
can be used show that the vorticity is
\emph{one degree more differentiable than one might expect};\footnote{To show the gain in regularity, 
one must use a combination of energy estimates and elliptic estimates 
along hypersurfaces of constant time.
In the present article, we do not actually derive 
energy estimates and elliptic estimates, 
but rather only equations that one can use to derive them.} see Def.~\ref{D:RENORMALIZEDCURLOFSPECIFICVORTICITY} for the special combinations, which we refer to as ``modified fluid variables.''
The gain in differentiability for the vorticity has long been known relative 
to Lagrangian coordinates, in particular because 
it has played an important role in proofs of local well-posedness 
\cites{dChLsS2010,dCsS2011,dCsS2012,jJnM2011,jJnM2009}
for the compressible Euler equations for data featuring a physical vacuum-fluid boundary.
However, the gain in differentiability for the vorticity
with respect to arbitrary vectorfield differential operators
(with coefficients of sufficient regularity relative to the solution)
seems to originate in \cite{jLjS2016a}.
The freedom to gain the derivative relative to general vectorfield differential operators
is important because Lagrangian coordinates are
not adapted to the wave characteristics, whose intersection 
corresponds to the formation of a shock.
Therefore, Lagrangian coordinates are not suitable
for following the solution all the way to the shock; instead, 
as we describe in Subsects.~\ref{SS:SHOCKFORMATIONPROOFSUMMARY} and \ref{SS:SUMMARYOFPROOF},
one needs a system of geometric coordinates constructed 
with the help of an eikonal function as well the aforementioned geometric vectorfields,
which are closely related to the geometric coordinates.
We remark that in the barotropic case \cite{jLjS2016a},
the ``special combinations'' of solution variables were simpler than they are
in the present article.
Specifically, in the barotropic case, the specific vorticity
and its curl satisfied good
transport equations; compare with \eqref{E:RENORMALIZEDCURLOFSPECIFICVORTICITY}. 
Similarly, we can prove that the entropy is one degree more differentiable 
than one might expect by studying a rescaled version of its Laplacian;\footnote{Actually, with
our future study of shock formation in mind, we formulate a div-curl-transport equation system for the
gradient of the entropy; see equations \eqref{E:TRANSPORTFLATDIVGRADENT}-\eqref{E:DIVGRADENT} and Remark~\ref{R:NEEDFORGRADENTANDDIVCURL}.}
see \eqref{E:RENORMALIZEDDIVOFENTROPY}. To the best of our knowledge, the
gain in regularity for the entropy is a new observation.

As we mentioned above, we exhibit the special null structure of the inhomogeneous terms
in Theorem~\ref{T:STRONGNULL}. Given Theorem~\ref{T:GEOMETRICWAVETRANSPORTSYSTEM},
the proof of Theorem~\ref{T:STRONGNULL} is simple and is essentially by observation.
However, it is difficult to overstate its profound importance in the study of shock formation
since, as we described above, the good null structures are essential for showing that the inhomogeneous terms are
not strong enough to interfere with the shock formation processes (at least for suitable data).
The gain of differentiability mentioned in the previous paragraph is also essential for our
forthcoming work on shock formation since we need it in order to control
some of the source terms in the wave equations.

\section{Geometric background and the strong null condition}
\label{S:LORENTZIANGEOMETRYBASIC}
In this section, we define some geometric objects and concepts that we need in order
to state our main results.

\subsection{Geometric tensorfields associated to the flow}
\label{SS:GEOMETRICTENSORFIELDS}
Roughly, there are two kinds of motion associated 
to compressible Euler flow: the transporting of vorticity and the
propagation of sound waves. We now discuss the tensorfields associated 
to these phenomena.

The material derivative vectorfield $\Transport$, defined in \eqref{E:MATERIALVECTORVIELDRELATIVETORECTANGULAR}, 
is associated to the transporting of vorticity. We now define the Lorentzian metric $g$ corresponding to the propagation of sound waves.

\begin{definition}[\textbf{The acoustical metric\footnote{Other authors have defined the acoustical metric to be $\Speed^2 g$.
	We prefer our definition because it implies that $(g^{-1})^{00} = - 1$,
	which simplifies the presentation of many formulas.} 
and its inverse}] 
\label{D:ACOUSTICALMETRIC}
We define the \emph{acoustical metric} $g$ and the 
\emph{inverse acoustical metric} $g^{-1}$ relative
to the Cartesian coordinates as follows:
\begin{subequations}
	\begin{align}
		g 
		& := 
		-  dt \otimes dt
			+ 
			\Speed^{-2} \sum_{a=1}^3(dx^a - v^a dt) \otimes (dx^a - v^a dt),
				\label{E:ACOUSTICALMETRIC} \\
		g^{-1} 
		& := 
			- \Transport \otimes \Transport
			+ \Speed^2 \sum_{a=1}^3 \partial_a \otimes \partial_a.
			\label{E:INVERSEACOUSTICALMETRIC}
	\end{align}
\end{subequations}
\end{definition}

\begin{remark}
	\label{R:GINVERSEISTHEINVERSE}
	One can easily check that
	$g^{-1}$ is the matrix inverse of $g$, that
	is, we have $(g^{-1})^{\mu \alpha} g_{\alpha \nu} = \delta_{\nu}^{\mu}$,
	where $\delta_{\nu}^{\mu}$ is the standard Kronecker delta.
\end{remark}

The vectorfield $\Transport$ enjoys some simple but important
geometric properties, which we provide in the next lemma.
We repeat the simple proof from \cite{jLjS2016a} for convenience.
\begin{lemma}[\textbf{Basic geometric properties of} $\Transport$]
	\label{L:BASICPROPERTIESOFTRANSPORT}
	$\Transport$ is $g$-timelike,\footnote{$g$-timelike vectorfields $V$ are such that $g(V,V) < 0$.}
	future-directed,\footnote{A vectorfield $V$ is future directed if 
	$V^0 > 0$, where $V^0$ is the $0$ Cartesian component.
	\label{FN:FUTUREDIRECTED}} 
	$g-$orthogonal to $\Sigma_t$,
	and unit-length:\footnote{Throughout we use the notation $g(V,W) := g_{\alpha \beta}V^{\alpha} W^{\beta}$.}
	\begin{align} \label{E:TRANSPORTISUNITLENGTH}
		g(\Transport,\Transport)
		& = - 1.
	\end{align}
\end{lemma}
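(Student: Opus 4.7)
The plan is to compute everything directly from the definition \eqref{E:ACOUSTICALMETRIC} of $g$, exploiting the fact that the one-forms $dx^a - v^a dt$ annihilate $\Transport$. All four claims follow from essentially one observation.

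First I would read off the Cartesian components of $\Transport = \partial_t + v^a \partial_a$, namely $\Transport^0 = 1$ and $\Transport^i = v^i$ for $i=1,2,3$. The key computation is then
\begin{align*}
(dx^a - v^a\, dt)(\Transport) = \Transport^a - v^a \Transport^0 = v^a - v^a = 0,
\end{align*}
for each $a=1,2,3$. This kills the entire ``spatial'' part of $g$ when either slot is filled with $\Transport$.

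Given that, I would compute $g(\Transport,\Transport)$ by plugging into \eqref{E:ACOUSTICALMETRIC}: the sum over $a$ vanishes by the display above, and the first term contributes $-(dt(\Transport))^2 = -1$, yielding \eqref{E:TRANSPORTISUNITLENGTH}. This simultaneously shows that $\Transport$ is $g$-timelike (since $g(\Transport,\Transport) = -1 < 0$) and unit-length. Future-directedness is immediate from $\Transport^0 = 1 > 0$ per the convention in footnote~\ref{FN:FUTUREDIRECTED}.

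For $g$-orthogonality to $\Sigma_t$, I would let $X$ be an arbitrary $\Sigma_t$-tangent vectorfield, so $X^0 = 0$ and hence $dt(X) = 0$. Plugging $(\Transport, X)$ into \eqref{E:ACOUSTICALMETRIC}, the first term vanishes because $dt(X) = 0$, while each term in the sum vanishes because $(dx^a - v^a dt)(\Transport) = 0$ by the key computation above. Hence $g(\Transport, X) = 0$, completing the proof. There is no real obstacle here; the only thing to be careful about is making clear that the vanishing of the spatial part is built into the definition of the acoustical metric, which is precisely why $\Transport$ plays such a distinguished geometric role in the ensuing analysis.
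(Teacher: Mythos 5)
Your proposal is correct and follows essentially the same direct-computation approach as the paper's proof, merely spelling out the ``simple calculation'' the paper leaves implicit. The organizing observation that $(dx^a - v^a\,dt)(\Transport) = 0$ is a clean way to package the same verification.
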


\begin{proof}
	Clearly $\Transport$ is future-directed.
	The identity \eqref{E:TRANSPORTISUNITLENGTH}
	(which also implies that $\Transport$ is $g$-timelike)
	follows from a simple calculation
	based on \eqref{E:MATERIALVECTORVIELDRELATIVETORECTANGULAR} and \eqref{E:ACOUSTICALMETRIC}.
	Similarly, we compute that
	$
	\displaystyle
	g(\Transport,\partial_i)
	:= g_{\alpha i}\Transport^{\alpha}
	= 0
	$
	for $i=1,2,3$, from which it follows that
	$\Transport$
	is $g-$orthogonal to $\Sigma_t$.
\end{proof}

\subsection{Decompositions relative to null frames}
\label{SS:NULLDECOMPOSITIONS}
The special null structure of our new formulation of the compressible Euler equations, 
which we briefly described in Subsect.~\ref{SS:CONTEXT}, 
is intimately connected to the notion of a null frame.

\begin{definition}[\textbf{$g$-Null frame}]
	\label{D:NULLFRAME}
	Let $g$ be a Lorentzian metric on\footnote{The topology of the spacetime manifold is not relevant for our discussion here.} 
	$\mathbb{R}^{1+3}$.
	A $g$-\emph{null frame} (``null frame'' for short, when the metric is clear)
	at a point $q$ is a set of vectors 
	\begin{align} \label{E:NULLFRAME}
		\mathscr{N} := \lbrace \Lunit,\uLunit,e_1,e_2 \rbrace
	\end{align}
	belonging to the tangent space of $\mathbb{R}^{1+3}$ at $q$
	with
	\begin{subequations}
	\begin{align}
	g(\Lunit,\Lunit) 
	& = g(\uLunit,\uLunit) = 0,
		&& 
		\label{E:NULLFRAMEVECTORFIELDSLENGTHZERO} \\
	g(\Lunit,\uLunit) &= -2,
		&& 
		\label{E:NULLPAIRINNERPRODUCTMINUS2} \\
	g(\Lunit,e_A) &= g(\uLunit,e_A) = 0, 
	&& (A=1,2),
		\label{E:NULLVECTORFIELDSORTHOGONALTOSPACELIKEONES} \\
	g(e_A,e_B) &= \delta_{AB},
	&& (A,B=1,2),
	\label{E:SPACELIKEVECTORFIELDSORTHONORMAL}
	\end{align}
	\end{subequations}
	where $\delta_{AB}$ is the standard 
	Kronecker delta.
\end{definition}

The following lemma is a simple consequence of Def.~\ref{D:NULLFRAME};
we omit the simple proof.

\begin{lemma}[\textbf{Decomposition of $g^{-1}$ relative to a null frame}]
	\label{L:DECOMPOFGINVERSERELATIVETONULLFRAME}
	Relative to an arbitrary $g-$null frame, we have
	\begin{align} \label{E:DECOMPOFGINVERSERELATIVETONULLFRAME}
		g^{-1} 
		& = - \frac{1}{2} \Lunit \otimes \uLunit
				- \frac{1}{2} \uLunit \otimes \Lunit
				+ \sum_{A=1}^2 e_A \otimes e_A.
	\end{align}
\end{lemma}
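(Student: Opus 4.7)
The plan is a short linear-algebra verification carried out in the frame basis. The approach is to write out the Gram matrix of $g$ with respect to $\mathscr{N}$, invert it block-wise, and interpret the result as the coefficients of $g^{-1}$ when the latter is expanded in the frame.

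First, I would verify that the four vectors in $\mathscr{N}$ form a basis of $T_q(\mathbb{R}^{1+3})$. If $a\Lunit + b\uLunit + c^1 e_1 + c^2 e_2 = 0$, then pairing this relation against $\Lunit$, $\uLunit$, $e_1$, and $e_2$ in turn, and invoking \eqref{E:NULLFRAMEVECTORFIELDSLENGTHZERO}--\eqref{E:SPACELIKEVECTORFIELDSORTHONORMAL}, forces the four coefficients to vanish successively. Hence the dual coframe $\{\theta^{\Lunit}, \theta^{\uLunit}, \theta^1, \theta^2\}$ is well-defined, and $g$ admits the representation $g = G_{ij}\,\theta^i \otimes \theta^j$, where the Gram matrix $G_{ij} := g(X_i, X_j)$ is read off directly from Def.~\ref{D:NULLFRAME} (with $X_1 = \Lunit$, $X_2 = \uLunit$, $X_3 = e_1$, $X_4 = e_2$).

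Second, I would invert $G$. By construction $G$ is block-diagonal: a $2 \times 2$ null block on $\{\Lunit, \uLunit\}$ with vanishing diagonal and off-diagonal entries equal to $-2$, together with the $2 \times 2$ identity block on $\{e_1, e_2\}$. The null block inverts to the analogous antidiagonal matrix with off-diagonal entries $-1/2$, while the identity block is unchanged. Writing $g^{-1} = H^{ij}\,X_i \otimes X_j$ and contracting the identity $(g^{-1})^{\mu\alpha} g_{\alpha\nu} = \delta^\mu_\nu$ of Remark~\ref{R:GINVERSEISTHEINVERSE} against $\theta^k$ and $X_l$ yields $H^{ij} G_{jk} = \delta^i_k$, that is, $H = G^{-1}$. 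Substituting the four nonzero entries of $H$ into $H^{ij}\,X_i \otimes X_j$ produces precisely the right-hand side of \eqref{E:DECOMPOFGINVERSERELATIVETONULLFRAME}.

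There is no serious obstacle; the entire argument is bookkeeping in the null frame. The only point that deserves care is the sign and the factor $-1/2$ arising from inverting the null block, which is what distinguishes the formula from the familiar orthonormal-frame decomposition and which is responsible for the symmetrized cross-terms $-\tfrac{1}{2}\Lunit \otimes \uLunit - \tfrac{1}{2}\uLunit \otimes \Lunit$ in \eqref{E:DECOMPOFGINVERSERELATIVETONULLFRAME}.
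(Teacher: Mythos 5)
Your argument is correct and is precisely the ``simple proof'' that the paper omits. Expanding $g^{-1}$ in the frame basis, identifying the coefficient matrix with the inverse of the Gram matrix $G$ via the contraction $H^{kj}G_{jl}=\delta^k_l$, and inverting the block-diagonal $G$ (the $2\times2$ null block with off-diagonal entries $-2$ giving $-1/2$, the identity block unchanged) yields exactly the right-hand side of \eqref{E:DECOMPOFGINVERSERELATIVETONULLFRAME}.
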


\begin{definition}[\textbf{Decomposition of a derivative-quadratic nonlinear term relative to a null frame}]
	\label{D:DECOMPOSINGNONLINEARTERMRELATIVETONULLFRAME}
	Let 
	\begin{align} \label{E:SOLUTIONARRAY}
	\vec{V} 
	& := 
		(\Densrenormalized,v^1,v^2,v^3,\Ent,\Vortrenormalized^1,\Vortrenormalized^2,\Vortrenormalized^3,\GradEnt^1,\GradEnt^2,\GradEnt^3)
	\end{align}
	be the array of unknowns in the system
	\eqref{E:VELOCITYWAVEEQUATION}-\eqref{E:DIVGRADENT}.
	We label the components of $\vec{V}$ as follows:
	\[
	V^0=\Densrenormalized, 
	V^i=v^i, 
	V^4 = \Ent,
	V^{i+4}=\Vortrenormalized^i, 
	\mbox{ and }
	V^{i+7}=\GradEnt^i
	\mbox{ for } 
	i=1,2,3.
	\]
	Let $\mathcal{N}(\vec{V},\partial \vec{V})$
	be a smooth nonlinear term
	that is quadratically nonlinear in $\partial \vec{V}$.
	That is, we assume that 
	$\mathcal{N}(\vec{V},\partial \vec{V})
	=\smoothfunction(\vec V)_{\Theta\Gamma}^{\alpha \beta}\partial_{\alpha} V^\Theta \partial_{\beta} V^\Gamma$, 
	where $\smoothfunction(\vec V)_{\Theta\Gamma}^{\alpha \beta}$ is symmetric in $\Theta$ and $\Gamma$ 
	and is a smooth function of $\vec V$ 	(\emph{not} necessarily vanishing at $0$) 
	for $\alpha,\beta=0,1,2,3$ and $\Theta,\Gamma=0,1,\cdots,10$.
	Given a null frame $\mathscr{N}$ as defined in Def.~\ref{D:NULLFRAME}, 
	we denote
	\[
\mathscr{N} := \lbrace e_1, e_2, e_3:= \uLunit, e_4 := \Lunit \rbrace.
\]
Moreover, we let $M_{\alpha}^A$ be the scalar functions
corresponding to expanding the Cartesian coordinate partial derivative vectorfield
$\partial_{\alpha}$ at $q$ relative to the null frame, that is,
\[
\partial_{\alpha} = \sum_{A=1}^4 M_{\alpha}^A e_A. 
\]
	Then\footnote{Here and below, we use Einstein's summation convention, where uppercase Latin indices 
	such as $A$ and $B$
	vary over $1,2,3,4$, 
	lowercase Latin ``spatial'' indices such as $a$ and $b$ vary over $1,2,3$,
	uppercase Greek indices such as $\Theta$ and $\Gamma$
	vary over $0,1,\dots,10$, and lowercase Greek ``spacetime'' such as $\alpha$ and $\beta$
	indices vary over $0,1,2,3$. \label{FN:INDEXCONVENTIONS}}
	\begin{align} \label{E:NONLINEARTERMDECOMPOSEDRELATIVETONULLFRAME}
		\mathcal{N}_{\mathscr{N}} :=\smoothfunction(\vec V)_{\Theta\Gamma}^{\alpha \beta} 
			M_{\alpha}^A M_{\beta}^B (e_A V^\Theta)(e_B V^\Gamma)
	\end{align}
	denotes the nonlinear term
	obtained by expressing $\mathcal{N}(\vec{V},\partial \vec{V})$ in terms of the derivatives
	of $\vec{V}$ with respect to the elements of $\mathscr{N}$, that is, by
	expanding $\partial \vec{V}$ as a linear combination of 
	the derivatives of $\vec{V}$
	with respect to the elements
	of $\mathscr{N}$ and substituting the expression for the factor $\partial \vec{V}$
	in $\mathcal{N}(\vec{V},\partial \vec{V})$.
\end{definition}

\subsection{Strong null condition and standard null forms}
\label{SS:STRONGNULLANDNULLFORMS}
In Subsect.~\ref{SS:CONTEXT}, we roughly described 
the special null structure enjoyed by the inhomogeneous terms
in our new formulation of the compressible Euler equations.
We precisely define the special null structure
in the next definition, which we recall from \cite{jLjS2016a}.

\begin{definition}[\textbf{Strong null condition}]
	\label{D:STRONGNULLCONDITION}
	Let $\mathcal{N}(\vec{V},\partial \vec{V})$ 
	be as in Def. \ref{D:DECOMPOSINGNONLINEARTERMRELATIVETONULLFRAME}. We say that $\mathcal{N}(\vec{V},\partial \vec{V})$ 
	verifies the \emph{strong null condition} relative to $g$ if
	the following condition holds: for \emph{every} $g$-null frame $\mathscr{N}$,
	$\mathcal{N}_{\mathscr{N}}$ can be expressed in a form that depends linearly 
	(or not at all)
	on $\Lunit \vec{V}$ and $\uLunit \vec{V}$. 
	That is, there exist scalars 
	$\overline{\smoothfunction}_{\Theta \Gamma}^{AB}(\vec V)$ 
	and 
	$\underline{\smoothfunction}_{\Theta\Gamma}^{AB}(\vec V)$ 
	such that
	\begin{align} \label{E:KEYCOMPONENTSVANISH}
	\overline{\smoothfunction}_{\Theta\Gamma}^{33}(\vec V)
	& = 
	\overline{\smoothfunction}_{\Theta\Gamma}^{44}(\vec V)=0,
	&&
	\underline{\smoothfunction}_{\Theta\Gamma}^{33}(\vec V)
	=
	\underline{\smoothfunction}_{\Theta\Gamma}^{44}(\vec V)
	 	= 0
	\end{align}
	and such that the following hold for $\Theta,\Gamma = 0,1,\cdots,10$:
	\begin{equation}\label{strong.null.def}
	\begin{split}
	\smoothfunction(\vec V)_{\Theta\Gamma}^{\alpha \beta} M_{\alpha}^3 M_{\beta}^3 
	(e_3 V^\Theta)(e_3 V^\Gamma)=&\overline{\smoothfunction}_{\Theta\Gamma}^{AB}(\vec V)(e_A V^\Theta)(e_B V^\Gamma),\\
	\smoothfunction(\vec V)_{\Theta\Gamma}^{\alpha \beta} M_{\alpha}^4 M_{\beta}^4 
	(e_4 V^\Theta)(e_4 V^\Gamma)
	= & \underline{\smoothfunction}_{\Theta\Gamma}^{AB}(\vec V)(e_A V^\Theta)(e_B V^\Gamma).  
	\end{split}
	\end{equation}
	\end{definition}
	
	\begin{remark}[\textbf{Some comments on the strong null condition}]
		\label{R:WHYCOMPLICATEDSTRONGNULL}
		Equation \eqref{strong.null.def} allows for the possibility that one uses 
		evolution equations to algebraically substitute for terms
		on LHS~\eqref{strong.null.def}, thereby generating
		the good terms on RHS~\eqref{strong.null.def}, which verify 
		the essential condition \eqref{E:KEYCOMPONENTSVANISH}. 
		 As our proof of Prop.~\ref{P:STANDARDNULLFORMSSATISFYSTRONGNULL}
		below shows, this kind of substitution is not needed for null form nonlinearities,
		which can directly be shown to exhibit the desired structure without the help of external evolution equations.
		That is, for null forms $\mathfrak{Q}$, one can directly show that 
		$
		\smoothfunction(\vec V)_{\Theta\Gamma}^{\alpha \beta} M_{\alpha}^3 M_{\beta}^3 
		= 
		\smoothfunction(\vec V)_{\Theta\Gamma}^{\alpha \beta} M_{\alpha}^4 M_{\beta}^4
		=
		0
		$.
		In the present article, the formulation of the equations that
		we provide (see Theorem~\ref{T:GEOMETRICWAVETRANSPORTSYSTEM}) is such that all derivative-quadratic
		terms are null forms. Readers might then wonder why our definition of the strong null condition
		allows for the more complicated scenario in which one uses external evolution equations for algebraic substitution
		to detect the good null structure.
		The reason is that in our work \cite{jLjS2016a} on the barotropic case, 
		we encountered the inhomogeneous terms
		$
		\displaystyle
		\epsilon_{iab} 
			 \left\lbrace
				(\partial_a \Vortrenormalized^d) \partial_d v^b
				- 
				(\partial_a v^d) \partial_d \Vortrenormalized^b
			 \right\rbrace
		$,
		which are not null forms. To show that this term had the desired null structure, 
		we used evolution equations for substitution and therefore relied on the full scope of Def.~\ref{D:STRONGNULLCONDITION}.
		In the present article, we encounter the same term,
		but we treat it in a different way 
		and show that in fact,
		$
		\displaystyle
		\epsilon_{iab} 
			 \left\lbrace
				(\partial_a \Vortrenormalized^d) \partial_d v^b
				- 
				(\partial_a v^d) \partial_d \Vortrenormalized^b
			 \right\rbrace
		$
		is equal to a null form plus other terms that are either harmless or 
		that can be incorporated
		into our definition of the modified fluid variables from Def.~\ref{D:RENORMALIZEDCURLOFSPECIFICVORTICITY};
		see the identity \eqref{E:REWRITINGOFCURLVORTICITYNULLFORM}
		and the calculations below it.
	\end{remark}
	
A key feature of our new formulation of the compressible
Euler equations is that all derivative-quadratic
inhomogeneous terms are linear combinations of
the standard null forms relative to the acoustical metric $g$,
which verify the strong null condition relative to $g$
(see Prop.~\ref{P:STANDARDNULLFORMSSATISFYSTRONGNULL}).
We now recall their standard definition.

\begin{definition}[\textbf{Standard null forms}]
	\label{D:NULLFORMS}
	The standard null forms $\mathfrak{Q}^g(\cdot,\cdot)$
	(relative to $g$)
	and
	$\mathfrak{Q}_{(\alpha \beta)}(\cdot,\cdot)$
	act on pairs $(\phi,\widetilde{\phi})$
	of scalar-valued functions as follows:
	\begin{subequations}
		\begin{align}
		\mathfrak{Q}^g(\partial \phi, \partial \widetilde{\phi})
		&:= (g^{-1})^{\alpha \beta} \partial_{\alpha} \phi \partial_{\beta} \widetilde{\phi},
			\label{E:Q0NULLFORM} \\
		\mathfrak{Q}_{(\alpha \beta)}(\partial \phi, \partial \widetilde{\phi})
		& = \partial_{\alpha} \phi \partial_{\beta} \widetilde{\phi}
			-
			 \partial_{\alpha} \widetilde{\phi} \partial_{\beta} \phi.
			\label{E:QALPHABETANULLFORM}
		\end{align}
	\end{subequations}
\end{definition}

\begin{proposition}[\textbf{The standard null forms satisfy the strong null condition}]
	\label{P:STANDARDNULLFORMSSATISFYSTRONGNULL}
	Let $\mathfrak{Q}$ be a standard null form relative to $g$ and let
	$\phi$ and $\widetilde{\phi}$ be any two entries of the array $\vec{V}$ 
	from Def.~\ref{D:DECOMPOSINGNONLINEARTERMRELATIVETONULLFRAME}.
	Let $\smoothfunction = \smoothfunction(\vec{V})$	
	be a smooth scalar-valued function of the entries of $\vec{V}$.
	Then $\smoothfunction(\vec{V}) \mathfrak{Q}(\partial \phi, \partial \widetilde{\phi})$
	verifies the strong null condition relative to $g$ from Def.~\ref{D:STRONGNULLCONDITION}.
\end{proposition}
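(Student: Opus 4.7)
The plan is to verify the strong null condition from Definition~\ref{D:STRONGNULLCONDITION} separately for each of the two families $\mathfrak{Q}^g$ and $\mathfrak{Q}_{(\alpha \beta)}$. In both cases I aim to prove the stronger statement that the left-hand sides of \eqref{strong.null.def} vanish identically, so that one may trivially take $\overline{\smoothfunction}_{\Theta \Gamma}^{AB}(\vec V) \equiv \underline{\smoothfunction}_{\Theta \Gamma}^{AB}(\vec V) \equiv 0$; in particular no use of external evolution equations is required, in agreement with the discussion in Remark~\ref{R:WHYCOMPLICATEDSTRONGNULL}. The setup is uniform: write $\phi = V^{\Theta_0}$ and $\widetilde{\phi} = V^{\Gamma_0}$, then identify the symbol $\smoothfunction_{\Theta \Gamma}^{\alpha \beta}(\vec V)$ (symmetric in $\Theta, \Gamma$) that presents $\smoothfunction(\vec V)\mathfrak{Q}(\partial \phi, \partial \widetilde{\phi})$ in the form demanded by Def.~\ref{D:DECOMPOSINGNONLINEARTERMRELATIVETONULLFRAME}, and check that the contractions $\smoothfunction_{\Theta \Gamma}^{\alpha \beta} M_{\alpha}^{3} M_{\beta}^{3}$ and $\smoothfunction_{\Theta \Gamma}^{\alpha \beta} M_{\alpha}^{4} M_{\beta}^{4}$ vanish for purely algebraic reasons.

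First, for $\mathfrak{Q}^g(\partial \phi, \partial \widetilde{\phi}) = (g^{-1})^{\alpha \beta} \partial_{\alpha} \phi \partial_{\beta} \widetilde{\phi}$, the symmetrized symbol is proportional to $\smoothfunction(\vec V) (g^{-1})^{\alpha \beta}$ times a symmetric tensor in $\Theta, \Gamma$. I would then invoke Lemma~\ref{L:DECOMPOFGINVERSERELATIVETONULLFRAME}, which gives
\[
g^{-1} = -\tfrac{1}{2}\Lunit \otimes \uLunit - \tfrac{1}{2}\uLunit \otimes \Lunit + \sum_{A=1}^{2} e_A \otimes e_A.
\]
Since $e_3 = \uLunit$ and $e_4 = \Lunit$, the $\uLunit \otimes \uLunit$ and $\Lunit \otimes \Lunit$ components of $g^{-1}$ in the null frame are absent, hence $(g^{-1})^{\alpha \beta} M_{\alpha}^{3} M_{\beta}^{3} = 0$ and $(g^{-1})^{\alpha \beta} M_{\alpha}^{4} M_{\beta}^{4} = 0$. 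Both left-hand sides of \eqref{strong.null.def} therefore vanish, and \eqref{E:KEYCOMPONENTSVANISH} holds with $\overline{\smoothfunction}, \underline{\smoothfunction}$ identically zero.

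Next, for the fixed-index null form $\mathfrak{Q}_{(\alpha_0 \beta_0)}(\partial \phi, \partial \widetilde{\phi}) = \partial_{\alpha_0} \phi \partial_{\beta_0} \widetilde{\phi} - \partial_{\alpha_0} \widetilde{\phi} \partial_{\beta_0} \phi$, relabelling dummy indices in the second summand and symmetrizing over $\Theta, \Gamma$ as required by Def.~\ref{D:DECOMPOSINGNONLINEARTERMRELATIVETONULLFRAME} yields a symbol $\smoothfunction_{\Theta \Gamma}^{\alpha \beta}(\vec V)$ proportional (by a tensor symmetric in $\Theta, \Gamma$) to
\[
\delta^{\alpha}_{\alpha_0} \delta^{\beta}_{\beta_0} - \delta^{\alpha}_{\beta_0} \delta^{\beta}_{\alpha_0},
\]
which is manifestly antisymmetric in the Cartesian pair $(\alpha, \beta)$. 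The frame-expansion coefficients $M_{\alpha}^{3} M_{\beta}^{3}$ and $M_{\alpha}^{4} M_{\beta}^{4}$ are symmetric in $(\alpha, \beta)$, so their contraction against an antisymmetric object produces zero. Again both equations in \eqref{strong.null.def} reduce to $0 = 0$, and the strong null condition holds trivially. Linearity of the decomposition in the nonlinearity then handles arbitrary linear combinations of the two types of null forms, completing the proof.

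I do not foresee any serious obstacle; the entire argument reduces to two algebraic facts, namely the vanishing of the $\Lunit \otimes \Lunit$ and $\uLunit \otimes \uLunit$ components of $g^{-1}$ provided by Lemma~\ref{L:DECOMPOFGINVERSERELATIVETONULLFRAME}, and the antisymmetry of the $\mathfrak{Q}_{(\alpha \beta)}$ symbol in its Cartesian indices. The only minor bookkeeping point is to keep the $\Theta, \Gamma$ symmetrization convention of Def.~\ref{D:DECOMPOSINGNONLINEARTERMRELATIVETONULLFRAME} disentangled from the spacetime-index contractions, but this poses no real difficulty because the cancellation lives entirely in the Cartesian indices $\alpha, \beta$.
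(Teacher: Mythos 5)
Your proposal follows the same two-pronged strategy as the paper's proof, and the underlying algebraic mechanisms are identical. For $\mathfrak{Q}^g$, both arguments read off the absence of $\Lunit\otimes\Lunit$ and $\uLunit\otimes\uLunit$ terms directly from Lemma~\ref{L:DECOMPOFGINVERSERELATIVETONULLFRAME}. For $\mathfrak{Q}_{(\alpha_0\beta_0)}$, the paper substitutes $\partial_{\gamma} = \sum_A M_{\gamma}^A e_A$ into the null form, relabels the dummy frame indices, and observes that the resulting coefficient $M_{\alpha_0}^A M_{\beta_0}^B - M_{\alpha_0}^B M_{\beta_0}^A$ is antisymmetric in $A, B$, so no diagonal term survives; you instead observe that the Cartesian-index structure of $\mathfrak{Q}_{(\alpha_0\beta_0)}$ is antisymmetric in $(\alpha,\beta)$ and that $M_{\alpha}^3 M_{\beta}^3$ and $M_{\alpha}^4 M_{\beta}^4$ are symmetric in $(\alpha,\beta)$. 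These are the same cancellation, expressed before versus after the change of basis, so the approach is genuinely the same.

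One technical wrinkle in your write-up is worth flagging, though it does not undermine the core observation. You assert that after symmetrizing over $\Theta,\Gamma$ (as the framework of Def.~\ref{D:DECOMPOSINGNONLINEARTERMRELATIVETONULLFRAME} nominally requires), the symbol factors as a $\Theta\Gamma$-symmetric tensor times $\delta^{\alpha}_{\alpha_0}\delta^{\beta}_{\beta_0} - \delta^{\alpha}_{\beta_0}\delta^{\beta}_{\alpha_0}$. But a coefficient that is simultaneously symmetric in $\Theta,\Gamma$ and antisymmetric in $\alpha,\beta$ would contract to zero against $\partial_{\alpha}V^{\Theta}\partial_{\beta}V^{\Gamma}$ (this quadratic array is symmetric under the simultaneous swap $(\alpha,\Theta)\leftrightarrow(\beta,\Gamma)$, so the relabelling $\alpha\leftrightarrow\beta$, $\Theta\leftrightarrow\Gamma$ yields a sign), so such a factored representation of a nonvanishing $\mathfrak{Q}_{(\alpha_0\beta_0)}$ cannot exist. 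The paper sidesteps this entirely by never routing the $\mathfrak{Q}_{(\alpha\beta)}$ argument through the $\smoothfunction_{\Theta\Gamma}^{\alpha\beta}$ symbol: it expands the null form in the frame and reads off the vanishing of the $(e_3\phi)(e_3\widetilde{\phi})$ and $(e_4\phi)(e_4\widetilde{\phi})$ coefficients directly. Replacing your symbol-level justification with that direct frame expansion (or, equivalently, observing that $\mathfrak{Q}_{(\alpha_0\beta_0)}(\partial\phi,\partial\widetilde{\phi})$, viewed as a bilinear form in $\partial\phi,\partial\widetilde{\phi}$ with fixed Cartesian indices $\alpha_0,\beta_0$, is antisymmetric in $\alpha_0,\beta_0$ and is being contracted with the symmetric $M_{\alpha_0}^3 M_{\beta_0}^3$) makes the step airtight while preserving your intent.
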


\begin{proof}
	In the case of the null form $\mathfrak{Q}^g$,
	the proof is a direct consequence of 
	the identity \eqref{E:DECOMPOFGINVERSERELATIVETONULLFRAME}.
	
	In the case of the null form $\mathfrak{Q}_{(\alpha \beta)}$
	defined in \eqref{E:QALPHABETANULLFORM},
we consider any $g$-null frame \eqref{E:NULLFRAME}, and we label its elements as follows:
$\mathscr{N} := \lbrace e_1, e_2, e_3:= \uLunit, e_4 := \Lunit \rbrace$.
Since $\mathscr{N}$ spans the tangent space at each point where it is defined, 
there exist scalar functions
$M_{\alpha}^A$ such that the following identity holds 
for $\alpha = 0,1,2,3$:
\begin{equation}\label{M.def}
\partial_{\alpha} = \sum_1^4 M_{\alpha}^A e_A.
\end{equation}
From \eqref{E:QALPHABETANULLFORM}
and \eqref{M.def},
we deduce
\[
\mathfrak{Q}_{(\alpha \beta)}(\partial \phi,\partial \widetilde{\phi})
= \sum_{A,B=1}^4
	\left\lbrace 
		M_{\alpha}^A M_{\beta}^B 
		-
		M_{\alpha}^B M_{\beta}^A
	\right\rbrace
	(e_A \phi) 
	e_B \widetilde{\phi}.
\]
The key point is that the terms in braces are antisymmetric in $A$ and $B$.
It follows that the sum does not contain any diagonal terms, that is, terms
proportional to $(e_A \phi) e_A \widetilde{\phi}$.
In particular, terms proportional to $(\uLunit \phi) \uLunit \widetilde{\phi}$
and
$(\Lunit \phi) \Lunit \widetilde{\phi}$ are not present,
which is the desired result.
\end{proof}

\section{Precise statement of the main results}
\label{S:MAINRESULTS}
\setcounter{equation}{0}
In this section, we precisely state our two main theorems and give the simple proof of the second one.
We start recalling the standard definition of the covariant wave operator $\square_g$.

\begin{definition}[\textbf{Covariant wave operator}]
\label{D:COVWAVEOP}
Let $g$ be a Lorentzian metric.
Relative to arbitrary coordinates,
the covariant wave operator $\square_g$ 
acts on scalar-valued functions $\phi$ as follows:
\begin{align} \label{E:WAVEOPERATORARBITRARYCOORDINATES}
\square_g \phi
	= \frac{1}{\sqrt{|\mbox{\upshape det} g|}}
	\partial_{\alpha}
	\left\lbrace
			\sqrt{|\mbox{\upshape det} g|} (g^{-1})^{\alpha \beta}
			\partial_{\beta} \phi
	\right\rbrace.
\end{align}
\end{definition}

\subsection{The new formulation of the compressible Euler equations with entropy}
\label{SS:NEWFORMULATION}
Our first main result is Theorem~\ref{T:GEOMETRICWAVETRANSPORTSYSTEM}, which provides the new formulation
of the compressible Euler equations. 
We postpone its lengthy proof until Sect.~\ref{S:PROOFOFMAINTHEOREM}.

\begin{remark}[\textbf{Explanation of the different kinds of inhomogeneous terms}]
	In the equations of Theorem~\ref{T:GEOMETRICWAVETRANSPORTSYSTEM},
	there are many inhomogeneous terms that are denoted by decorated versions of
	$\mathfrak{Q}$. These terms are linear combinations of $g$-null forms that, 
	in our forthcoming proof of shock formation, 
	can be controlled in the energy estimates without elliptic estimates.
	Similarly, in the equations of Theorem~\ref{T:GEOMETRICWAVETRANSPORTSYSTEM},
	decorated versions of the symbol $\mathfrak{L}$ 
	denote terms that are at most linear in the derivatives of the solution
	and that can be controlled in the energy estimates without elliptic estimates. 
	In our forthcoming proof of proving shock formation,
	the $\mathfrak{Q}$'s and $\mathfrak{L}$'s will be simple error terms.
	The equations of Theorem~\ref{T:GEOMETRICWAVETRANSPORTSYSTEM}
	also feature additional null form inhomogeneous terms 
	depending on 
	$\partial \Vortrenormalized$
	and
	$\partial \GradEnt$,
	which we explicitly display because one needs elliptic estimates
	along $\Sigma_t$ to control them in the energy estimates. 
	For this reason, in the proof of shock formation,
	these terms are substantially more difficult to 
	bound compared to the $\mathfrak{Q}$'s and $\mathfrak{L}$'s.
	Similarly, terms that are linear in
	$\partial \Vortrenormalized$, 
	$\partial \GradEnt$,
	$\CurlofVortrenormalized$,
	or
	$\DivofEntrenormalized$
	can be controlled only with the help 
	of elliptic estimates along $\Sigma_t$.
\end{remark}

\begin{theorem}[\textbf{The geometric wave-transport-divergence-curl formulation of the compressible Euler equations}]
	\label{T:GEOMETRICWAVETRANSPORTSYSTEM}
	Let $\bar{\varrho} > 0$ be any constant background density (see \eqref{E:BACKGROUNDDENSITY}),
	let $\Transport$ be the material derivative vectorfield defined in \eqref{E:MATERIALVECTORVIELDRELATIVETORECTANGULAR},
	let $g$ be the acoustical metric from Def.~\ref{D:NULLFORMS},
	and let $\CurlofVortrenormalized$ and $\DivofEntrenormalized$
	be the modified fluid variables from Def.~\ref{D:RENORMALIZEDCURLOFSPECIFICVORTICITY}.
	In three spatial dimensions under an arbitrary equation of state \eqref{E:EOS} with positive sound speed 
	$\Speed$ (see \eqref{E:SOUNDSPEED}),
	the compressible Euler equations 
	\eqref{E:TRANSPORTDENSRENORMALIZEDRELATIVETORECTANGULAR}-\eqref{E:ENTROPYTRANSPORT}
	imply the following equations
	(see Footnote~\ref{F:VECTORFIELDSACTONFUNCTIONS})
	for the scalar-valued functions
	$\Densrenormalized$
	and
	$v^i$,
	$\Vortrenormalized^i$,
$\Ent$,
$\GradEnt^i$,
$\Flatdiv \Vortrenormalized$,
$\CurlofVortrenormalized^i$,
$\DivofEntrenormalized$,
and
$(\Flatcurl \GradEnt)^i$,
	$(i=1,2,3)$,
	where the Cartesian component functions $v^i$ are 
	\textbf{treated as scalar-valued functions
	under covariant differentiation} on LHS~\eqref{E:VELOCITYWAVEEQUATION}:

\medskip

\noindent \underline{\textbf{\upshape Covariant wave equations}}:
	\begin{subequations}
	\begin{align}
		\square_g v^i
		& = 
			- 
			\Speed^2 \exp(2 \Densrenormalized) \CurlofVortrenormalized^i
			+ 
			\mathfrak{Q}_{(v)}^i
			+ 
			\mathfrak{L}_{(v)}^i,
			\label{E:VELOCITYWAVEEQUATION}	\\
	\square_g \Densrenormalized
	& = 
		-
		\exp(\Densrenormalized) \frac{p_{;\Ent}}{\bar{\varrho}} \DivofEntrenormalized
		+
		\mathfrak{Q}_{(\Densrenormalized)}
		+
		\mathfrak{L}_{(\Densrenormalized)}.
			\label{E:RENORMALIZEDDENSITYWAVEEQUATION} 
\end{align}

\medskip

\noindent \underline{\textbf{\upshape Transport equations}}:
\begin{align}	\Transport \Vortrenormalized^i
	& = \mathfrak{L}_{(\Vortrenormalized)}^i,
		\label{E:RENORMALIZEDVORTICTITYTRANSPORTEQUATION}
		\\
	\Transport \Ent
	& = 0,	
		\label{E:ENTROPYTRANSPORTMAINSYSTEM}
			\\
	\Transport \GradEnt^i
	& = \mathfrak{L}_{(\GradEnt)}^i.
		\label{E:GRADENTROPYTRANSPORT}
	\end{align}
	\end{subequations}

\medskip	
	
\noindent \underline{\textbf{\upshape Transport-divergence-curl system for the specific vorticity}}
\begin{subequations}
\begin{align} \label{E:FLATDIVOFRENORMALIZEDVORTICITY}
	\Flatdiv \Vortrenormalized
	& = 
		\mathfrak{L}_{(\Flatdiv \Vortrenormalized)},
		\\
\Transport 
\CurlofVortrenormalized^i
& = 
		- 
		2 \delta_{jk} \epsilon_{iab} \exp(-\Densrenormalized) (\partial_a v^j) \partial_b \Vortrenormalized^k
		+
		\epsilon_{ajk}
		\exp(-\Densrenormalized)
		(\partial_a v^i) 
		\partial_j \Vortrenormalized^k
		\label{E:EVOLUTIONEQUATIONFLATCURLRENORMALIZEDVORTICITY} 
			\\
& \ \
		+ 
		\exp(-3 \Densrenormalized) \Speed^{-2} \frac{p_{;\Ent}}{\bar{\varrho}} 
		\left\lbrace
			(\Transport \GradEnt^a) \partial_a v^i
			-
			(\Transport v^i) \partial_a \GradEnt^a
		\right\rbrace
		\notag \\
	& \ \
		+
		\exp(-3 \Densrenormalized) \Speed^{-2} \frac{p_{;\Ent}}{\bar{\varrho}}  
		\left\lbrace
			(\Transport v^a) \partial_a \GradEnt^i
			- 
			(\partial_a v^a) \Transport \GradEnt^i
		\right\rbrace
		\notag \\
& \ \
	+
	\mathfrak{Q}_{(\CurlofVortrenormalized)}^i
	+	
	\mathfrak{L}_{(\CurlofVortrenormalized)}^i.
	\notag 
\end{align}	
\end{subequations}

\medskip

\noindent \underline{\textbf{\upshape Transport-divergence-curl system for the entropy gradient}}
\begin{subequations}
\begin{align} 	
\Transport \DivofEntrenormalized
	& =  
			2 \exp(-2 \Densrenormalized) 
			\left\lbrace
				(\partial_a v^a) \partial_b \GradEnt^b
				-
				(\partial_a v^b) \partial_b \GradEnt^a
			\right\rbrace
			+
			\exp(-\Densrenormalized) \delta_{ab} (\Flatcurl \Vortrenormalized)^a \GradEnt^b
			\label{E:TRANSPORTFLATDIVGRADENT}
				\\
	& \ \
			+
			\mathfrak{Q}_{(\DivofEntrenormalized)},
			\notag 
			\\
	(\Flatcurl \GradEnt)^i & = 0.
	\label{E:DIVGRADENT}
\end{align}
\end{subequations}

	Above, 
	$\mathfrak{Q}_{(v)}^i$,
	$\mathfrak{Q}_{(\Densrenormalized)}$, 
	$\mathfrak{Q}_{(\CurlofVortrenormalized)}^i$,
	and
	$\mathfrak{Q}_{(\DivofEntrenormalized)}$
	are the \textbf{null forms relative to} $g$ defined by
	\begin{subequations}
		\begin{align}
		\mathfrak{Q}_{(v)}^i	
		& := 	-
					\left\lbrace
						1
						+
						\Speed^{-1} \Speed_{;\Densrenormalized}
					\right\rbrace
					(g^{-1})^{\alpha \beta} \partial_{\alpha} \Densrenormalized \partial_{\beta} v^i,
			\label{E:VELOCITYNULLFORM} \\
		\mathfrak{Q}_{(\Densrenormalized)}
		& := 
		- 
		3 \Speed^{-1} \Speed_{;\Densrenormalized} 
		(g^{-1})^{\alpha \beta} \partial_{\alpha} \Densrenormalized \partial_{\beta} \Densrenormalized
		+ 
		\left\lbrace
			(\partial_a v^a) (\partial_b v^b)
			-
			(\partial_a v^b) \partial_b v^a
		\right\rbrace,
			\label{E:DENSITYNULLFORM}
				\\
	\mathfrak{Q}_{(\CurlofVortrenormalized)}^i
	& :=
		\exp(-3 \Densrenormalized) \Speed^{-2} \frac{p_{;\Ent}}{\bar{\varrho}}  \GradEnt^i
		\left\lbrace
			(\partial_a v^b) \partial_b v^a
			-
			(\partial_a v^a) \partial_b v^b
		\right\rbrace
			\label{E:RENORMALIZEDVORTICITYCURLNULLFORM} \\
& \ \
		+ 
		\exp(-3 \Densrenormalized) \Speed^{-2} \frac{p_{;\Ent}}{\bar{\varrho}}
		\left\lbrace
			(\partial_a v^a) \GradEnt^b \partial_b v^i 
			- 
			(\GradEnt^a \partial_a v^b) \partial_b v^i 
		\right\rbrace
		\notag \\
& \ \
		+ 
		2 \exp(-3 \Densrenormalized) \Speed^{-2} \frac{p_{;\Ent}}{\bar{\varrho}}
		\left\lbrace
			(\GradEnt^a \partial_a \Densrenormalized)  \Transport v^i
			 - 
		  (\Transport \Densrenormalized) \GradEnt^a \partial_a v^i
		\right\rbrace
		\notag \\
	&  \ \
			+ 
			2 \exp(-3 \Densrenormalized) \Speed^{-3} \Speed_{;\Densrenormalized} \frac{p_{;\Ent}}{\bar{\varrho}}
			\left\lbrace
				(\GradEnt^a \partial_a \Densrenormalized)  \Transport v^i
				- 
				(\Transport \Densrenormalized) \GradEnt^a \partial_a v^i
			\right\rbrace
		\notag \\
	& \ \
		+ 
		\exp(-3 \Densrenormalized) \Speed^{-2} \frac{p_{;\Ent;\Densrenormalized}}{\bar{\varrho}}
		\left\lbrace
			(\Transport \Densrenormalized) \GradEnt^a \partial_a v^i
			- 
			(\GradEnt^a \partial_a \Densrenormalized) \Transport v^i
		\right\rbrace
		\notag \\
	& \ \
		+
		\exp(-3 \Densrenormalized) \Speed^{-2} \frac{p_{;\Ent;\Densrenormalized}}{\bar{\varrho}} \GradEnt^i
		\left\lbrace
			(\Transport v^a) \partial_a \Densrenormalized
			-
			 (\partial_a v^a) \Transport \Densrenormalized
		\right\rbrace
		\notag \\
	& \ \
		+
		2 \exp(-3 \Densrenormalized) \Speed^{-2} \frac{p_{;\Ent}}{\bar{\varrho}} \GradEnt^i
		\left\lbrace
			(\partial_a v^a) \Transport \Densrenormalized
			- 
			(\Transport v^a) \partial_a \Densrenormalized
		\right\rbrace
		\notag \\
	& \ \
		 	+
			2 \exp(-3 \Densrenormalized) \Speed^{-3} \Speed_{;\Densrenormalized} \frac{p_{;\Ent}}{\bar{\varrho}} \GradEnt^i
			\left\lbrace
				(\partial_a v^a) \Transport \Densrenormalized
		 		- 
		 		(\Transport v^a) \partial_a \Densrenormalized
		 	\right\rbrace,
		 		\notag
		 		\\
\label{E:DIVENTROPYGRADIENTNULLFORM}
	\mathfrak{Q}_{(\DivofEntrenormalized)}
	& :=
		2 \exp(-2 \Densrenormalized) 
		\left\lbrace
			(\GradEnt^a \partial_a v^b) \partial_b \Densrenormalized
			-
			(\partial_a v^a) \GradEnt^b \partial_b \Densrenormalized
		\right\rbrace.
\end{align}
\end{subequations}
	In addition, the terms
	$\mathfrak{L}_{(v)}^i$,
	$\mathfrak{L}_{(\Densrenormalized)}$,
	$\mathfrak{L}_{(\Vortrenormalized)}^i$,
	$\mathfrak{L}_{(\GradEnt)}^i$,
		$\mathfrak{L}_{(\Flatdiv \Vortrenormalized)}$,
		and
	$\mathfrak{L}_{(\CurlofVortrenormalized)}^i$,
	which are at most linear in the derivatives of the unknowns, are defined as follows:
	\begin{subequations}
	\begin{align} \label{E:VELOCITYILINEARORBETTER} 
		\mathfrak{L}_{(v)}^i
		& := 
		2 \exp(\Densrenormalized) \epsilon_{iab} (\Transport v^a) \Vortrenormalized^b
		-
		\frac{p_{;\Ent}}{\bar{\varrho}} \epsilon_{iab} \Vortrenormalized^a \GradEnt^b
			\\
	& \ \
		- 
		\exp(-\Densrenormalized) \frac{p_{;\Ent}}{\bar{\varrho}} \GradEnt^a \partial_a v^i
		- 
		\frac{1}{2} \exp(-\Densrenormalized) \frac{p_{;\Densrenormalized;\Ent}}{\bar{\varrho}} \GradEnt^a \partial_a v^i
			\notag \\
		& \ \
		- 
		2 \exp(-\Densrenormalized) \Speed^{-1} \Speed_{;\Densrenormalized} \frac{p_{;\Ent}}{\bar{\varrho}} 
		(\Transport \Densrenormalized) \GradEnt^i
		+
		\exp(-\Densrenormalized) \frac{p_{;\Ent;\Densrenormalized}}{\bar{\varrho}} (\Transport \Densrenormalized) \GradEnt^i,
			\notag  \\
		\mathfrak{L}_{(\Densrenormalized)}
		& :=
		-
		2 \exp(-\Densrenormalized) \frac{p_{;\Ent;\Densrenormalized}}{\bar{\varrho}} \GradEnt^a \partial_a \Densrenormalized 
		-
		\exp(-\Densrenormalized) \frac{p_{;\Ent;\Ent}}{\bar{\varrho}} \delta_{ab} \GradEnt^a \GradEnt^b,
		 \label{E:DENSITYLINEARORBETTER}
		 	\\
		\mathfrak{L}_{(\Vortrenormalized)}^i
		& := 
		\Vortrenormalized^a \partial_a v^i
		-
		\exp(-2 \Densrenormalized) \Speed^{-2} \frac{p_{;\Ent}}{\bar{\varrho}} \epsilon_{iab} (\Transport v^a) \GradEnt^b,
		\label{E:SPECIFICVORTICITYLINEARORBETTER}
			\\
		\mathfrak{L}_{(\GradEnt)}^i
		& :=
			- \GradEnt^a \partial_a v^i
			+ 
			\epsilon_{iab} \exp(\Densrenormalized) \Vortrenormalized^a \GradEnt^b,
			\label{E:ENTROPYGRADIENTLINEARORBETTER}
				\\
	\mathfrak{L}_{(\Flatdiv \Vortrenormalized)}
		& := - \Vortrenormalized^a \partial_a \Densrenormalized,
		\label{E:RENORMALIZEDVORTICITYDIVLINEARORBETTER} \\
	\mathfrak{L}_{(\CurlofVortrenormalized)}^i	 		
	& :=
		 		2 \exp(-3 \Densrenormalized) \Speed^{-3} \Speed_{;\Ent} \frac{p_{;\Ent}}{\bar{\varrho}} 
				(\Transport v^i) \delta_{ab} \GradEnt^a \GradEnt^b
				-
				2 \exp(-3 \Densrenormalized) \Speed^{-3} \Speed_{;\Ent} \frac{p_{;\Ent}}{\bar{\varrho}} 
				\delta_{ab} \GradEnt^a (\Transport v^b) \GradEnt^i
			\label{E:RENORMALIZEDVORTICITYCURLLINEARORBETTER} \\
		& \ \
			+ 
			\exp(-3 \Densrenormalized) \Speed^{-2} \frac{p_{;\Ent;\Ent}}{\bar{\varrho}} (\Transport v^i) \delta_{ab} \GradEnt^a \GradEnt^b
			- 
			\exp(-3 \Densrenormalized) \Speed^{-2} \frac{p_{;\Ent;\Ent}}{\bar{\varrho}} \delta_{ab} (\Transport v^a) \GradEnt^b \GradEnt^i.
			\notag 
	\end{align}
	\end{subequations}
\end{theorem}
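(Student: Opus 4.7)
The plan is to derive the system in four blocks, starting from the first-order formulation \eqref{E:TRANSPORTDENSRENORMALIZEDRELATIVETORECTANGULAR}--\eqref{E:ENTROPYTRANSPORT} and relying throughout on two elementary identities. The first is the commutator identity $[\Transport,\partial_a] f = - (\partial_a v^b) \partial_b f$, which quantifies how the material derivative fails to commute with Cartesian partials. The second is the unpacking of \eqref{E:WAVEOPERATORARBITRARYCOORDINATES} using \eqref{E:INVERSEACOUSTICALMETRIC}, which gives $\square_g \phi = - \Transport^2 \phi + \Speed^2 \Delta \phi + (\text{first-order in }\partial \phi)$, where the first-order terms come both from the volume factor $\sqrt{|\det g|}$ (a function of $\Speed$) and from the fact that $(g^{-1})^{\alpha\beta} \partial_\alpha \partial_\beta \phi$ differs from $-\Transport(\Transport \phi) + \Speed^2 \Delta \phi$ by $(\Transport v^b) \partial_b \phi$. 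Every wave equation in the theorem is then obtained by computing $\Transport^2$ of a solution variable using the first-order equations and then re-assembling the result as $\square_g(\cdot) + (\text{remainders})$.

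For the wave equation for $\Densrenormalized$, I would apply $\Transport$ to \eqref{E:TRANSPORTDENSRENORMALIZEDRELATIVETORECTANGULAR} and, after commuting, substitute \eqref{E:TRANSPORTVELOCITYRELATIVETORECTANGULAR} for $\Transport v^a$. This produces $\Speed^2 \Delta \Densrenormalized$ (from $-\partial_a(-\Speed^2 \partial_a \Densrenormalized)$), a term proportional to $\Delta \Ent$ that is recognized as $\exp(\Densrenormalized) \frac{p_{;\Ent}}{\bar{\varrho}} \DivofEntrenormalized$ modulo the correction $\exp(-2\Densrenormalized) \GradEnt^a \partial_a \Densrenormalized$ built into Def.~\ref{D:RENORMALIZEDCURLOFSPECIFICVORTICITY}, the null form $(\partial_a v^a)(\partial_b v^b) - (\partial_a v^b)(\partial_b v^a)$ generated by the commutator (this is a linear combination of $\mathfrak{Q}_{(ab)}(\partial v^a, \partial v^b)$), and a $\mathfrak{Q}^g$-type contribution from differentiating $\Speed^2 = \Speed^2(\Densrenormalized,\Ent)$. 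The wave equation for $v^i$ is handled analogously: after $\Transport$ and commutation one obtains $\Speed^2 \partial_i \partial_a v^a$, which differs from $\Speed^2 \Delta v^i$ by a curl-of-curl term $\Speed^2 \{\partial_i \partial_a v^a - \Delta v^i\}$; rewriting this via $\Flatcurl v = \omega = \exp(\Densrenormalized) \Vortrenormalized$ and inserting the entropy-gradient corrections in \eqref{E:RENORMALIZEDCURLOFSPECIFICVORTICITY} produces precisely $-\Speed^2 \exp(2 \Densrenormalized) \CurlofVortrenormalized^i$ plus the null form $\mathfrak{Q}_{(v)}^i$ and the linear remainder $\mathfrak{L}_{(v)}^i$.

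Next I would derive the transport equations. The entropy equation is \eqref{E:ENTROPYTRANSPORT} verbatim; for $\GradEnt^i = \partial_i \Ent$ one applies $\partial_i$ to $\Transport \Ent = 0$ and uses the commutator identity. For $\Vortrenormalized^i$, I would take $\Flatcurl$ of \eqref{E:TRANSPORTVELOCITYRELATIVETORECTANGULAR}: the term $\Speed^2 \partial_i \Densrenormalized$ contributes only through its $\Ent$-dependence (yielding $\epsilon_{iab} \GradEnt^a \partial_b \Densrenormalized$, which the velocity equation converts into a $\Transport v \wedge \GradEnt$ expression, the baroclinic source), and the entropy piece contributes similarly; the resulting equation for $\omega^i$ is then divided by $\exp(\Densrenormalized)$ using $\Transport \Densrenormalized = - \Flatdiv v$ to produce the equation \eqref{E:RENORMALIZEDVORTICTITYTRANSPORTEQUATION} for $\Vortrenormalized^i$ with source $\mathfrak{L}_{(\Vortrenormalized)}^i$. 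The identity $\Flatdiv \Vortrenormalized = - \Vortrenormalized^a \partial_a \Densrenormalized$ follows directly from $\Flatdiv \omega = 0$ and the product rule applied to $\Vortrenormalized = \exp(-\Densrenormalized) \omega$, while $(\Flatcurl \GradEnt)^i = 0$ is immediate since $\GradEnt$ is a pure gradient.

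The main obstacle, and the genuinely delicate part of the proof, is the transport system for the modified variables $\CurlofVortrenormalized^i$ and $\DivofEntrenormalized$. If one naively takes $\Flatcurl$ of \eqref{E:RENORMALIZEDVORTICTITYTRANSPORTEQUATION} and $\Flatdiv$ of the equation for $\GradEnt^i$, then commutes $\Transport$ through, one generates source terms of the schematic form $\epsilon_{iab}\{(\partial_a \Vortrenormalized^d) \partial_d v^b - (\partial_a v^d) \partial_d \Vortrenormalized^b\}$ and $\{(\partial_a v^a)(\partial_b \GradEnt^b) - (\partial_a v^b)(\partial_b \GradEnt^a)\}$, neither of which manifestly has the null structure of Def.~\ref{D:STRONGNULLCONDITION}. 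The key observation, already present in the barotropic case \cite{jLjS2016a} but now augmented by a new entropy-driven correction, is that by adding to $\exp(-\Densrenormalized) \Flatcurl \Vortrenormalized$ and to $\exp(-2\Densrenormalized) \Flatdiv \GradEnt$ the precise quadratic correction terms built into Def.~\ref{D:RENORMALIZEDCURLOFSPECIFICVORTICITY}, and then applying $\Transport$ to these combinations, the non-null pieces in the raw curl/divergence calculation are exactly cancelled by the $\Transport$ derivatives of the corrections, once one systematically substitutes the velocity equation \eqref{E:TRANSPORTVELOCITYRELATIVETORECTANGULAR} and the entropy-gradient transport equation \eqref{E:GRADENTROPYTRANSPORT} for the $\Transport v^i$ and $\Transport \GradEnt^i$ factors that appear. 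What remains is a combination of null forms of types $\mathfrak{Q}^g$ and $\mathfrak{Q}_{(\alpha\beta)}$ together with terms at most linear in $\partial \vec{V}$. Carrying out this cancellation term by term, while keeping careful track of the many factors of $\Speed$, $\exp(\Densrenormalized)$, and partial derivatives of the equation of state, is the bulk of the calculation in Sect.~\ref{S:PROOFOFMAINTHEOREM} and constitutes the essential new content of the theorem beyond the barotropic analysis.
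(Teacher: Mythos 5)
Your outline follows essentially the same route as the paper's proof in Sect.~\ref{S:PROOFOFMAINTHEOREM}: expand $\square_g$ relative to Cartesian coordinates via \eqref{E:INVERSEACOUSTICALMETRIC} so that $\square_g \phi = -\Transport \Transport \phi + \Speed^2 \Delta \phi + (\text{first-order})$, then compute $\Transport\Transport\Densrenormalized$ and $\Transport\Transport v^i$ by differentiating the first-order equations, re-assemble as $\square_g$ plus remainders, and recognize the curl-of-curl term via $\Flatcurl v = \exp(\Densrenormalized)\Vortrenormalized$. Your derivation of the transport equations for $\Vortrenormalized$, $\GradEnt$, $\Flatdiv\Vortrenormalized$, and $\Flatcurl\GradEnt$ also matches the paper.

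However, your explanation of \emph{why} the modified fluid variables of Def.~\ref{D:RENORMALIZEDCURLOFSPECIFICVORTICITY} are defined as they are is not accurate, and it is worth correcting because this is the genuinely delicate point of the theorem. You attribute the corrections to cancelling the ``non-null'' source terms $\epsilon_{iab}\{(\partial_a \Vortrenormalized^d)\partial_d v^b - (\partial_a v^d)\partial_d \Vortrenormalized^b\}$ and $\{(\partial_a v^a)\partial_b\GradEnt^b - (\partial_a v^b)\partial_b\GradEnt^a\}$. That is not what happens. The second expression is \emph{already} a standard $\mathfrak{Q}_{(\alpha\beta)}$ null form and needs no help. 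The first expression is shown in \eqref{E:REWRITINGOFCURLVORTICITYNULLFORM} to equal a null form plus terms linear in $\partial\Vortrenormalized$ or in $\Vortrenormalized$, and this rewriting uses only antisymmetry of $\epsilon$ and the definition of vorticity — it does not involve the modified-variable corrections at all. The actual purpose of the quadratic correction terms in $\CurlofVortrenormalized$ and the linear correction in $\DivofEntrenormalized$ is \emph{regularity}, not null structure: upon taking $\Flatcurl$ of the baroclinic source in \eqref{E:RESTATEDRENORMALIZEDVORTICITYEVOLUTIONEQUATION}, one generates the second-derivative terms $\exp(-2\Densrenormalized)\Speed^{-2}\frac{p_{;\Ent}}{\bar{\varrho}}(\Transport\partial_a v^j)\GradEnt^k$, and upon taking $\Flatdiv$ of \eqref{E:PROOFGRADENTROPYTRANSPORT} one generates $\GradEnt^a\partial_a\Flatdiv v$; if these were left on the right-hand side, the transport equations would fail to gain a derivative for $\Vortrenormalized$ and $\Ent$, which is the whole point of the construction. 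The corrections are designed so that, after substituting $\Flatdiv v = -\Transport\Densrenormalized$ and commuting, these dangerous second-derivative terms become perfect $\Transport$-derivatives and can be absorbed into the left-hand side. Since you would likely discover this in carrying out the computation, this is a conceptual imprecision rather than a fatal gap — but the rationale you give for the key definitions is wrong, and since that rationale is the paper's central new observation, it deserves to be stated correctly.
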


\begin{remark}[\textbf{Comparison to the results of} \cite{jLjS2016a}]
	For barotropic fluids, we have $p_{;\Ent} \equiv 0$,
	and the variables $\Ent$ and $\GradEnt^i$
	do not influence the dynamics of the remaining solution variables.
	For such fluids, one can check that
	equations \eqref{E:VELOCITYWAVEEQUATION}-\eqref{E:RENORMALIZEDDENSITYWAVEEQUATION},
	\eqref{E:RENORMALIZEDVORTICTITYTRANSPORTEQUATION},
	and
	\eqref{E:FLATDIVOFRENORMALIZEDVORTICITY}-\eqref{E:EVOLUTIONEQUATIONFLATCURLRENORMALIZEDVORTICITY}
	are equivalent to the equations that we derived in
	\cite{jLjS2016a}. However, one needs some observations described in Remark~\ref{R:WHYCOMPLICATEDSTRONGNULL}
	in order to see the equivalence.
\end{remark}

\begin{remark}[\textbf{The data for the system} \eqref{E:VELOCITYWAVEEQUATION}-\eqref{E:DIVGRADENT}]
	\label{R:REMARKSONDATA}
	The fundamental initial data for the compressible Euler equations 	
	\eqref{E:TRANSPORTDENSRENORMALIZEDRELATIVETORECTANGULAR}-\eqref{E:ENTROPYTRANSPORT}
	are 
	$\Densrenormalized|_{t=0}$,
	$\lbrace v^i|_{t=0} \rbrace_{i=1,2,3}$,
	and $\Ent|_{t=0}$. 
	On the other hand, to solve the Cauchy problem for the system
	\eqref{E:VELOCITYWAVEEQUATION}-\eqref{E:DIVGRADENT},
	one also needs the data
	$\partial_t \Densrenormalized|_{t=0}$,
	$\lbrace \partial_t v^i|_{t=0} \rbrace_{i=1,2,3}$
	$\lbrace \Vortrenormalized^i|_{t=0} \rbrace_{i=1,2,3}$,
	and
	$\lbrace \GradEnt^i|_{t=0} \rbrace|_{i=1,2,3}$.
	These data can be obtained by differentiating the fundamental data
	and using equations \eqref{E:TRANSPORTDENSRENORMALIZEDRELATIVETORECTANGULAR}-\eqref{E:ENTROPYTRANSPORT}
	for substitution.\footnote{One could also consider more general
	data in which 
	$\partial_t \Densrenormalized|_{t=0}$,
	$\lbrace \partial_t v^i|_{t=0} \rbrace_{i=1,2,3}$,
	$\lbrace \Vortrenormalized^i|_{t=0} \rbrace_{i=1,2,3}$,
	and
	$\lbrace \GradEnt^i|_{t=0} \rbrace|_{i=1,2,3}$
	are chosen independently of the fundamental data. 
	However, the corresponding solution
	to the system \eqref{E:VELOCITYWAVEEQUATION}-\eqref{E:DIVGRADENT}
	would not generally be a solution to the compressible Euler equations
	\eqref{E:TRANSPORTDENSRENORMALIZEDRELATIVETORECTANGULAR}-\eqref{E:ENTROPYTRANSPORT}.}	
\end{remark}

\subsection{The structure of the inhomogeneous terms}
\label{SS:STRUCTUREOFINHOMOGENEOUSTERMS}
The next theorem is our second main result. 
In the theorem, we characterize the structure of the 
inhomogeneous terms in the equations Theorem~\ref{T:GEOMETRICWAVETRANSPORTSYSTEM}.
The most important part of the theorem is the null structure of the
type $\textbf{(iii)}$ terms.

\begin{theorem}[\textbf{The structure of the inhomogeneous terms}]
	\label{T:STRONGNULL}
	Let
	\[\vec{V}
	:=
	(\Densrenormalized,v^1,v^2,v^3,\Ent,\Vortrenormalized^1,\Vortrenormalized^2,\Vortrenormalized^3,\GradEnt^1,\GradEnt^2,\GradEnt^3)
	\]
	denote the array of unknowns.
	The inhomogeneous terms on the right-hand sides of equations 
	\eqref{E:VELOCITYWAVEEQUATION}-\eqref{E:DIVGRADENT}
	consist of three types: 
	\renewcommand{\theenumi}{\textbf{\roman{enumi}}}
	\begin{enumerate}
	\item Terms of the form $\smoothfunction(\vec{V})$, where $\smoothfunction$ is smooth and vanishes when 
	$\GradEnt = \Vortrenormalized \equiv 0$
	\item Terms of the form $\smoothfunction(\vec{V}) \cdot \partial \vec{V}$ where $\smoothfunction$ is smooth,
	that is, terms that depend linearly on $\partial \vec{V}$
	\item Terms of the form $\smoothfunction(\vec{V}) \mathfrak{Q}(\partial \vec{V},\partial \vec{V})$,
	where $\smoothfunction$ is smooth and 
	$\mathfrak{Q}$ is a \textbf{standard null form relative to $g$} 
	from Def.~\ref{D:NULLFORMS}
	\end{enumerate}
\end{theorem}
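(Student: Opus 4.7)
The plan is to proceed by direct inspection of each inhomogeneous term appearing on the right-hand sides of equations \eqref{E:VELOCITYWAVEEQUATION}--\eqref{E:DIVGRADENT} from Theorem~\ref{T:GEOMETRICWAVETRANSPORTSYSTEM}, classifying each into one of the three types \textbf{(i)}--\textbf{(iii)}. Since every term is written out explicitly in that theorem, no new computation is needed; the proof reduces to recognizing algebraic structures and, in a handful of places, performing a dummy-index relabeling that exhibits a hidden null-form antisymmetry.

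First I would dispatch the linear and derivative-free terms. The factors $-\Speed^2 \exp(2\Densrenormalized)\CurlofVortrenormalized^i$ in \eqref{E:VELOCITYWAVEEQUATION} and $-\exp(\Densrenormalized)(p_{;\Ent}/\bar{\varrho})\DivofEntrenormalized$ in \eqref{E:RENORMALIZEDDENSITYWAVEEQUATION} expand, via Def.~\ref{D:RENORMALIZEDCURLOFSPECIFICVORTICITY}, into sums of terms of the form $\smoothfunction(\vec V)\partial V^{\Theta}$, and similarly for the explicit $(\Flatcurl \Vortrenormalized)^a\GradEnt^b$ term in \eqref{E:TRANSPORTFLATDIVGRADENT}, giving type \textbf{(ii)}. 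Each summand of the $\mathfrak{L}$-terms listed in \eqref{E:VELOCITYILINEARORBETTER}--\eqref{E:RENORMALIZEDVORTICITYCURLLINEARORBETTER} is by inspection either a product of smooth coefficients with two factors chosen from $\Vortrenormalized^a$ and $\GradEnt^b$ (no derivatives, type \textbf{(i)}) or a product of a smooth coefficient with a single factor $\Transport V^{\Theta}$ or $\partial_a V^{\Theta}$ (type \textbf{(ii)}).

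Next I would treat the derivative-quadratic terms, which all belong to type \textbf{(iii)}. The $(g^{-1})^{\alpha\beta}\partial_{\alpha}\Densrenormalized\,\partial_{\beta}\cdot$ factors in \eqref{E:VELOCITYNULLFORM}--\eqref{E:DENSITYNULLFORM} are immediately of the form $\smoothfunction(\vec V)\mathfrak{Q}^g$. The remaining quadratic combinations fall into two families. The ``spatial antisymmetric'' combinations, namely $(\partial_a v^a)(\partial_b v^b) - (\partial_a v^b)\partial_b v^a$ in \eqref{E:DENSITYNULLFORM}, the analogous cluster in \eqref{E:RENORMALIZEDVORTICITYCURLNULLFORM}, the $\epsilon_{iab}$-prefixed term $-2\delta_{jk}\epsilon_{iab}\exp(-\Densrenormalized)(\partial_a v^j)\partial_b\Vortrenormalized^k$ in \eqref{E:EVOLUTIONEQUATIONFLATCURLRENORMALIZEDVORTICITY}, the $(\partial_a v^a)\partial_b \GradEnt^b - (\partial_a v^b)\partial_b \GradEnt^a$ cluster in \eqref{E:TRANSPORTFLATDIVGRADENT}, and \eqref{E:DIVENTROPYGRADIENTNULLFORM}, are each rewritten, after relabeling dummy indices, as a finite sum
\begin{equation*}
\smoothfunction(\vec V)\sum_{a,b}\mathfrak{Q}_{(ab)}(\partial\phi,\partial\tilde{\phi})
\end{equation*}
for appropriate entries $\phi,\tilde{\phi}$ of $\vec V$, where we use the elementary identity $(\partial_a\phi)(\partial_b\tilde\phi) - (\partial_b\phi)(\partial_a\tilde\phi) = \mathfrak{Q}_{(ab)}(\partial\phi,\partial\tilde\phi)$ and the antisymmetry of $\epsilon_{iab}$ in the pair of indices contracted against derivatives. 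The ``material-derivative'' combinations of the form $(\Transport\phi)(\partial_a\tilde\phi) - (\Transport\tilde\phi)(\partial_a\phi)$, which appear throughout $\mathfrak{Q}_{(\CurlofVortrenormalized)}^i$ and in the $p_{;\Ent}/\bar{\varrho}$-weighted clusters of \eqref{E:EVOLUTIONEQUATIONFLATCURLRENORMALIZEDVORTICITY}, are handled by expanding $\Transport = \partial_t + v^c\partial_c$ and invoking
\begin{equation*}
(\Transport\phi)(\partial_a\tilde\phi) - (\Transport\tilde\phi)(\partial_a\phi) = \Transport^{\alpha}\,\mathfrak{Q}_{(\alpha a)}(\partial\phi,\partial\tilde\phi),
\end{equation*}
whose Cartesian components $\Transport^{\alpha}$ depend smoothly on $\vec V$, so that the whole expression again has the form $\smoothfunction(\vec V)\mathfrak{Q}_{(\alpha a)}(\partial\phi,\partial\tilde\phi)$.

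The only nontrivial repackaging is the summand $\epsilon_{ajk}\exp(-\Densrenormalized)(\partial_a v^i)\partial_j\Vortrenormalized^k$ in \eqref{E:EVOLUTIONEQUATIONFLATCURLRENORMALIZEDVORTICITY}: exploiting the antisymmetry of $\epsilon_{ajk}$ in the pair $(a,j)$ (rather than the manifest antisymmetry in $(j,k)$) rewrites it as $\tfrac12\epsilon_{ajk}\exp(-\Densrenormalized)\mathfrak{Q}_{(aj)}(\partial v^i,\partial\Vortrenormalized^k)$, which is of type \textbf{(iii)}. This choice of antisymmetry, together with the $\Transport^{\alpha}\mathfrak{Q}_{(\alpha a)}$ identity above, is the only subtle point of the argument; everything else is a bookkeeping exercise, which is why Theorem~\ref{T:STRONGNULL} is an essentially immediate corollary of Theorem~\ref{T:GEOMETRICWAVETRANSPORTSYSTEM}.
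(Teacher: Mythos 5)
Your classification is correct and follows the same by-inspection route as the paper, which simply declares each term's type ``easy to see'' and leaves the verification to the reader; you supply the missing details, in particular the dummy-index relabelings, the expansion $(\Transport\phi)(\partial_a\tilde\phi)-(\Transport\tilde\phi)(\partial_a\phi)=\Transport^{\alpha}\mathfrak{Q}_{(\alpha a)}(\partial\phi,\partial\tilde\phi)$ for the material-derivative clusters, and the antisymmetrization in the $(a,j)$ pair for the $\epsilon_{ajk}(\partial_a v^i)\partial_j\Vortrenormalized^k$ term. Same approach, fleshed out.
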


\begin{proof}
	It is easy to see that 
	$\mathfrak{Q}_{(v)}^i$,
	$\mathfrak{Q}_{(\Densrenormalized)}$, 
	$\mathfrak{Q}_{(\CurlofVortrenormalized)}^i$,
	and
	$\mathfrak{Q}_{(\DivofEntrenormalized)}$
	are type $\textbf{(iii)}$ terms
	and that the same is true for the products on the
	first through third lines of RHS~\eqref{E:EVOLUTIONEQUATIONFLATCURLRENORMALIZEDVORTICITY} 
	and the terms in braces on the first line of RHS~\eqref{E:TRANSPORTFLATDIVGRADENT}.
	Similarly, it is easy to see that
	$\mathfrak{L}_{(v)}^i$,
	$\mathfrak{L}_{(\Densrenormalized)}$,
	$\mathfrak{L}_{(\Vortrenormalized)}^i$,
	$\mathfrak{L}_{(\GradEnt)}^i$,
		$\mathfrak{L}_{(\Flatdiv \Vortrenormalized)}$,
		and
	$\mathfrak{L}_{(\CurlofVortrenormalized)}^i$
	are sums of terms of type $\textbf{(i)}$ and $\textbf{(ii)}$,
	while the 
	first product on RHS~\eqref{E:VELOCITYWAVEEQUATION},
	the first product on RHS~\eqref{E:RENORMALIZEDDENSITYWAVEEQUATION},
	and the second product on RHS~\eqref{E:TRANSPORTFLATDIVGRADENT}
	are , in view of Def.~\ref{D:RENORMALIZEDCURLOFSPECIFICVORTICITY}, 
	type $\textbf{(ii)}$.
\end{proof}

\section{Overview of the roles of Theorems~\ref{T:GEOMETRICWAVETRANSPORTSYSTEM} and \ref{T:STRONGNULL} in proving shock formation}
\label{S:OVERVIEWOFROLESOFTHEOREMS}
As we mentioned in Sect.~\ref{S:INTRO}, 
in forthcoming work, 
we plan to use the results of
Theorems~\ref{T:GEOMETRICWAVETRANSPORTSYSTEM} and \ref{T:STRONGNULL}
as the starting point for a proof of finite-time shock formation 
for the compressible Euler equations. 
In this section, we overview the main ideas in the proof and highlight the role that
Theorems~\ref{T:GEOMETRICWAVETRANSPORTSYSTEM} and \ref{T:STRONGNULL} play.
We plan to study a convenient open set of initial conditions in three spatial dimensions
whose solutions typically have non-zero vorticity and non-constant entropy: perturbations (without symmetry conditions) 
of simple isentropic (that is, constant entropy\footnote{Note that 
the transport equation \eqref{E:ENTROPYTRANSPORT}
implies that the entropy is constant in the maximal classical development of the data if it is constant along $\Sigma_0$.}) 
plane waves.\footnote{These simple plane waves have vanishing vorticity
and constant entropy, through their perturbations generally do not.}
We note that in our joint work \cite{jSgHjLwW2016}
on scalar wave equations in two spatial dimensions,
we proved shock formation for solutions corresponding to 
a similar set of nearly plane symmetric initial data.
The advantage of studying perturbations of simple isentropic plane waves is that 
it allows us to focus our attention on the singularity formation
without having to confront additional evolutionary phenomena
that are often found in solutions to wave-like systems.
For example, nearly plane symmetric solutions
do not exhibit wave dispersion because their dynamics are dominated by 
$1D$-type wave behavior.\footnote{In one spatial dimension, 
wave equations are essentially transport equations and thus their solutions
do not experience dispersive decay.}
In particular, our forthcoming analysis will not feature time weights or radial weights.

\subsection{Blowup for simple isentropic plane waves}
\label{SS:SIMPLEPLANEWAVESBLOWUP}
Simple isentropic plane waves are a subclass of plane symmetric solutions. By plane symmetric solutions,
we mean solutions that depend only on $t$ and $x^1$ and such that $v^2 \equiv v^3 = 0$.
To further explain simple isentropic plane wave solutions, 
we start by defining the Riemann invariants:
\begin{align} \label{E:RIEMANNINVARIANTS}
	\mathcal{R}_{\pm}
	& := v^1 \pm F(\Densrenormalized).
\end{align}
The function $F$ in \eqref{E:EVOLUTIONRIEMANNINVARIANT}
solves the following initial value problem:
\begin{align} \label{E:DENSITYFUNCTIONUSEDFORRIEMANNINVARIANTS}
	\frac{d}{d\Densrenormalized} F(\Densrenormalized) 
	&= \Speed(\Densrenormalized),
	&&
	F(\Densrenormalized=0) = 0,
\end{align}
where we are omitting the dependence of $\Speed$ on $\Ent$ since $\Ent \equiv \mbox{\upshape constant}$
and $F(\Densrenormalized=0) = 0$ is a convenient normalization condition.
As is well-known, 
in one spatial dimension,
in terms of $\mathcal{R}_{\pm}$,
the compressible Euler equations
\eqref{E:TRANSPORTDENSRENORMALIZEDRELATIVETORECTANGULAR}-\eqref{E:ENTROPYTRANSPORT}
with constant entropy are equivalent to the system
\begin{align} \label{E:EVOLUTIONRIEMANNINVARIANT}
	\uLunit \mathcal{R}_-
	& = 0,
	&&
	\Lunit \mathcal{R}_+
	= 0,
\end{align}
where
\begin{align} 
		\Lunit 
		& := \partial_t + (v^1 + \Speed) \partial_1,
		&&
		\uLunit: = \partial_t + (v^1 - \Speed) \partial_1
		\label{E:CHARVECPLANESYMMETRY}
\end{align}
are null vectorfields relative to the acoustical metric of Def.~\ref{D:NULLFORMS}.
That is, one can easily check that $g(\Lunit, \Lunit) = g(\uLunit,\uLunit) = 0$.
The initial data are $\mathcal{R}_{\pm}|_{t=0}$ 
(and the initial constant value of the entropy, which we suppress for the rest of the discussion).
A simple isentropic plane wave is a solution such that one of the Riemann invariants, say
$\mathcal{R}_-$, completely vanishes. Note that by the first equation in
\eqref{E:EVOLUTIONRIEMANNINVARIANT}, the condition $\mathcal{R}_- = 0$ is propagated
by the flow of the equations if it is verified at time $0$. 

The simple isentropic plane wave solutions described in the previous paragraph typically form a shock in finite time
via the same mechanism that leads to singularity formation in solutions to Burgers' equation.
For illustration, we now quickly sketch the argument. We assume the simple isentropic plane wave condition 
$\mathcal{R}_- \equiv 0$,
which implies that the system \eqref{E:EVOLUTIONRIEMANNINVARIANT}
reduces to 
$
\lbrace 
	\partial_1 +  f(\mathcal{R}_+) \partial_1 
\rbrace 
\mathcal{R}_+ = 0
$,
where $f$ is a smooth function determined by $F$. It can be shown that
$f$ is \emph{not} a constant-valued function 
of $\mathcal{R}_+$
except in the case of
the equation of state of a Chaplygin gas, 
which is
$
\displaystyle
p = p(\varrho)=C_0-\frac{C_1}{\varrho}$, 
where $C_0\in \mathbb R$ and $C_1>0$.
We now take a $\partial_1$ derivative of the evolution equation for $\mathcal{R}_+$
to deduce the equation
$
\lbrace 
	\partial_1 +  f(\mathcal{R}_+) \partial_1 
\rbrace  
\partial_1 \mathcal{R}_+  
= - f'(\mathcal{R}_+) (\partial_1 \mathcal{R}_+)^2
$.
Since $\mathcal{R}_+$ 
is constant along the integral curves of $\partial_1 +  f(\mathcal{R}_+) \partial_1$
(which are also known as characteristics in the present context), the above equation may be viewed as
a Riccati-type ODE for $\partial_1 \mathcal{R}_+$ along the characteristics
of the form
\[
\frac{d}{dt}
\partial_1 \mathcal{R}_+
=
k (\partial_1 \mathcal{R}_+)^2,
\]
where the constant $k$ is equal to
$- f'(\mathcal{R}_+)$ evaluated at the point on the $x^1$-axis
from which the characteristic emanates.
Thus, for initial data such that $\partial_1 \mathcal{R}_+$ and $k$ have the same (non-zero) sign
at some point along the $x^1$ axis,
the solution $\partial_1 \mathcal{R}_+$
will blow up in finite time along the corresponding characteristic,
even though $\mathcal{R}_+$ remains bounded;
this is essentially the crudest picture of the formation of a shock singularity.
Note that there is no blowup in the case of the Chaplygin gas since
$f' \equiv 0$ in that case; 
see Footnote~\ref{FN:NOSHOCKSCHAPLYGIN} for related remarks.

\subsection{Fundamental ingredients in the proof of shock formation in more than one spatial dimension}
\label{SS:SHOCKFORMATIONPROOFSUMMARY}
We can view the simple isentropic plane waves described in Subsect.~\ref{SS:SIMPLEPLANEWAVESBLOWUP}
as solutions in three spatial dimension that have symmetry. 
In our forthcoming work on shock formation in three spatial dimensions, 
we will study perturbations (without symmetry assumptions) of simple isentropic plane waves
and show that the shock formation illustrated in Subsect.~\ref{SS:SIMPLEPLANEWAVESBLOWUP} 
is stable. For technical convenience, instead of considering data on $\mathbb{R}^3$,
we will consider initial data on the spatial manifold 
\[
\Sigma := \mathbb{R} \times \mathbb{T}^2,
\]
where the factor of $\mathbb{T}^2$ corresponds 
to perturbations away from plane symmetry.
This allows us to circumvent some technical difficulties,
such as the fact that non-trivial plane wave solutions have infinite energy
when viewed as solutions in three spatial dimensions.

Although the method of Riemann invariants allows for an easy proof of shock formation
for simple isentropic plane waves, the method is not available in more than one spatial dimension. 
Another key feature of the study of shock formation in more than one spatial dimension
is that all known proofs rely on sharp estimates that provide much more information than 
the proof of blowup for simple plane waves from Subsect.~\ref{SS:SIMPLEPLANEWAVESBLOWUP}.
Therefore, in our forthcoming proof of shock formation for perturbations of simple isentropic plane waves, 
we will use the geometric formulation of the equations provided by Theorem~\ref{T:GEOMETRICWAVETRANSPORTSYSTEM}.
We will show that these equations have the right structure such that they can be incorporated
into an extended version of the paradigm for proving shock formation initiated by 
Alinhac \cites{sA1999a,sA1999b,sA2001b,sA2002}
and significantly advanced by Christodoulou \cite{dC2007}.
The most fundamental ingredient in the approaches of Alinhac and Christodoulou
is a system of \emph{geometric coordinates} 
\begin{align} \label{E:GEOMETRICCOORDINATEs}
	(t,u,\vartheta^1,\vartheta^2)
\end{align}
that are dynamically adapted to the solution. 
We denote the corresponding partial derivative vectorfields as follows:
\begin{align} \label{E:GEOMETRICCOORDINATEPARTIALDERIVATIVEVECTORFIELDS}
\left\lbrace
	\frac{\partial}{\partial t},
	\frac{\partial}{\partial u},
	\frac{\partial}{\partial \vartheta^1},
	\frac{\partial}{\partial \vartheta^2}
\right\rbrace.
\end{align}
Here, $t$ is the standard Cartesian time function
and $u$ is an eikonal function adapted to the acoustical metric. That is, $u$ solves
the following hyperbolic PDE, known as the eikonal equation:
\begin{subequations}
\begin{align} \label{E:FIRSTEIK}
	(g^{-1})^{\alpha \beta} \partial_{\alpha} u \partial_{\beta} u 
	& = 0,
	& \partial_t u & > 0,
		\\
	u|_{t=0} = 1 - x^1.
	\label{E:EIKIC}
\end{align}
\end{subequations} 
Above and throughout the rest of the article, $g$ is the acoustical metric from Def.~\ref{D:ACOUSTICALMETRIC}.
We construct the geometric torus coordinates $\vartheta^A$ 
by solving the transport equations
\begin{subequations}
\begin{align} \label{E:TORUSCOORDINATETRANSPORT}
	(g^{-1})^{\alpha \beta} \partial_{\alpha} u \partial_{\beta} \vartheta^A
	& = 0,
	\\
	\vartheta^1|_{t=0} 
	&= x^2,
	\qquad
	\vartheta^2|_{t=0}
	= x^3,
	\label{E:TORUSCOORDINATEIC}
\end{align}
\end{subequations}
where $x^2$ and $x^3$ are standard (locally defined) Cartesian coordinates on $\mathbb{T}^2$;
see Footnote~\ref{FN:CARTESIANCOORDNATESINT2CASE} regarding the Cartesian coordinates in the present context.
For various reasons, 
when differentiating the equations to obtain estimates for the solution's derivatives, 
one needs to use geometric vectorfields, described below,
rather than the partial derivative vectorfields in \eqref{E:GEOMETRICCOORDINATEPARTIALDERIVATIVEVECTORFIELDS}.
For this reason, the coordinates $(\vartheta^1,\vartheta^2)$ play only a minor role
in the analysis and we will downplay them for most of the remaining discussion.

Note that the Cartesian components $g_{\alpha \beta}$ depend on the fluid variables $\Densrenormalized$ and $v^i$
(see \eqref{E:ACOUSTICALMETRIC}) and therefore the eikonal equation \eqref{E:FIRSTEIK} is coupled to the Euler equations.
The initial conditions \eqref{E:EIKIC} are adapted to the approximate plane symmetry of the solutions under study.
The level sets of $u$ are known as the ``characteristics'' or the
``acoustic characteristics,'' and we denote them by $\mathcal{P}_u$. 
The $\mathcal{P}_u$ are null hypersurfaces relative to the 
acoustical metric $g$. As we further explain below, the intersection of the level sets of the function $u$
(viewed as an $\mathbb{R}$-valued function of the Cartesian coordinates)
corresponds to the formation of a shock singularity and the blowup of the first Cartesian
coordinate partial derivatives of the density and velocity.
$u$ is a sharp coordinate that can be used to reveal special structures in the equations
and to construct geometric objects adapted to the characteristics.
The price that one pays for the precision
is that the top-order regularity theory for $u$ is very complicated
and tensorial in nature. As we later explain, the regularity theory is especially difficult 
near the shock and leads to degenerate high-order energy estimates. 
The first use of an eikonal function in proving a global result for a nonlinear hyperbolic system
occurred in the celebrated proof \cite{dCsK1993} of the stability of the Minkowski
spacetime as a solution to the Einstein-vacuum equations.\footnote{Roughly, \cite{dCsK1993} is a small-data
global existence result for the Einstein-vacuum equations.} Eikonal functions have also played a central role
in proofs of low-regularity well-posedness for quasilinear hyperbolic equations,
most notably the recent Klainerman--Rodnianski--Szeftel proof of the bounded $L^2$
curvature conjecture \cite{sKiRjS2015}.

The paradigm for proving shock formation originating in the works \cites{sA1999a,sA1999b,sA2001b,sA2002,dC2007}
can be summarized as follows:
\begin{quote}
To the extent possible, prove ``long-time-existence-type'' estimates for the solution relative to the geometric
coordinates and then recover the formation of the shock singularity as a degeneration between the geometric 
coordinates and the Cartesian ones. In particular, prove that the solution remains many times differentiable
relative to the geometric coordinates, even though the first Cartesian coordinate partial derivatives of 
the density and velocity blow up.
\end{quote}

The most important quantity in connection with the above paradigm for proving shock formation
is the inverse foliation density.

\begin{definition}[\textbf{Inverse foliation density}]
 \label{D:UPMUDEF}
We define the inverse foliation density $\upmu > 0$ as follows:
\begin{align} \label{E:UPMUDEF}
	\upmu 
	& := 
		\frac{-1}{(g^{-1})^{\alpha \beta} \partial_{\alpha} t \partial_{\beta} u}.
\end{align}
\end{definition}
$
\displaystyle
\frac{1}{\upmu}
$ is a measure of the density of 
the characteristics $\mathcal{P}_u$
relative to the constant-time hypersurfaces $\Sigma_t$. When $\upmu$ 
vanishes, the density becomes infinite, the level sets of $u$
intersect, and, as it turns out, the first Cartesian coordinate partial
derivatives of the density and velocity blow up in finite time.
See Figure~\ref{F:FRAME} on pg.~\pageref{F:FRAME}
for a depiction of a solution for which the characteristics 
have almost intersected.
Note that by \eqref{E:INVERSEACOUSTICALMETRIC} and \eqref{E:EIKIC}, 
we have\footnote{$\upmu|_{t=0}$ depends on the data for the fluid variables.} 
$\upmu|_{t=0} \approx 1$. Christodoulou was the first to introduce $\upmu$ 
in the context of proving shock formation in more than one spatial dimension
\cite{dC2007}.
However, before Christodoulou's work, 
quantities in the spirit of $\upmu$ had been used
in one spatial dimension,
for example, by John in his proof \cite{fJ1974} of blowup
for solutions to a large class of quasilinear hyperbolic systems.
In short, to prove a shock formation result under Christodoulou's approach,
one must control the solution all the way up until the time of first
vanishing of $\upmu$.

\subsection{Summary of the proof of shock formation}
\label{SS:SUMMARYOFPROOF}
Having introduced the geometric coordinates and the inverse foliation density,
we are now ready to summarize the main ideas in the proof of shock formation
for perturbations of simple isentropic plane wave solutions to the compressible Euler
equations in three spatial dimensions with spatial topology $\Sigma = \mathbb{R} \times \mathbb{T}^2$.
For convenience, we will study solutions with very small initial data given along
a portion of the characteristic $\mathcal{P}_0$
and ``interesting'' data (whose derivatives can be large in directions transversal to the characteristics)
along a portion of $\Sigma_0 \simeq \mathbb{R} \times \mathbb{T}^2$;
see Figure~\ref{F:FRAME} below for a schematic depiction of the setup.

Given the structures revealed by Theorems~\ref{T:GEOMETRICWAVETRANSPORTSYSTEM} and \ref{T:STRONGNULL},
most of the proof is based on a framework developed in prior works, 
as we now quickly summarize.
The bulk of the framework originated in Christodoulou's groundbreaking work \cite{dC2007}
in the irrotational case, with some key contributions 
(especially the idea to rely on an eikonal function)
coming from Alinhac's earlier work \cites{sA1999a,sA1999b,sA2001b,sA2002}
on scalar wave equations. The relevance of the strong null condition 
in the context of proving shock formation was first recognized in \cites{gHsKjSwW2016,jS2016b}.
The crucial new ideas needed to handle the transport equations and the elliptic operators/estimates
originated in \cites{jLjS2016a,jLjS2016b}. 
A key contribution of the present work 
is realizing \textbf{i)} that one can gain a derivative for the entropy $\Ent$ 
and \textbf{ii)} that in the context of shock formation, 
one needs to rely on transport-$\Flatdiv$-$\Flatcurl$ estimates for the 
entropy gradient $\GradEnt$ in order to avoid uncontrollable error terms;
see Remark~\ref{R:NEEDFORGRADENTANDDIVCURL} and Step (2) below for further
discussion on this point.

We now summarize the main ideas behind our forthcoming proof of shock formation.
Most of the discussion will be at a rough, schematic level.

\begin{enumerate}
	\item \textbf{(Commutation vectorfields adapted to the characteristics).}
	With the help of the eikonal function $u$ (see Subsect.~\ref{SS:SHOCKFORMATIONPROOFSUMMARY}), 
	construct a set of geometric vectorfields
	\begin{align} \label{E:COMMSET}
			\Fullset
			& := \lbrace \Lunit, \Rad, \GeoAng_1, \GeoAng_2 \rbrace
		\end{align}
		that are adapted to the characteristics $\mathcal{P}_u$; see Figure~\ref{F:FRAME}.
		Readers may consult
		\cites{jLjS2016a,jLjS2016b,gHsKjSwW2016} for details on how to use $u$ to construct $\Fullset$.
		Here, we only note some basic properties of these vectorfields.
		The subset
		\begin{align} \label{E:TANSET}
			\Tanset
			& := \lbrace \Lunit, \GeoAng_1, \GeoAng_2 \rbrace
		\end{align}
		spans the tangent space of $\mathcal{P}_u$
		while the vectorfield $\Rad$ is \emph{transversal} to $\mathcal{P}_u$.
		$\Lunit$ is a $g$-null (that is, $g(\Lunit,\Lunit) = 0$)
		generator of the $\mathcal{P}_u$ normalized by
		$\Lunit t = 1$, while 
		$
		\displaystyle
		\Rad
		=
		\frac{\partial}{\partial u}
		+ 
		\mbox{\upshape Error} 
		$,
		where
		$
		\mbox{\upshape Error}
		$
		is a small vectorfield tangent to the co-dimension-two tori $\mathcal{P}_u \cap \Sigma_t$.
		The vectorfields 
		$\lbrace \GeoAng_1, \GeoAng_2 \rbrace$ span the tangent space of $\mathcal{P}_u \cap \Sigma_t$.
		
		\begin{center}
\begin{overpic}[scale=.35]{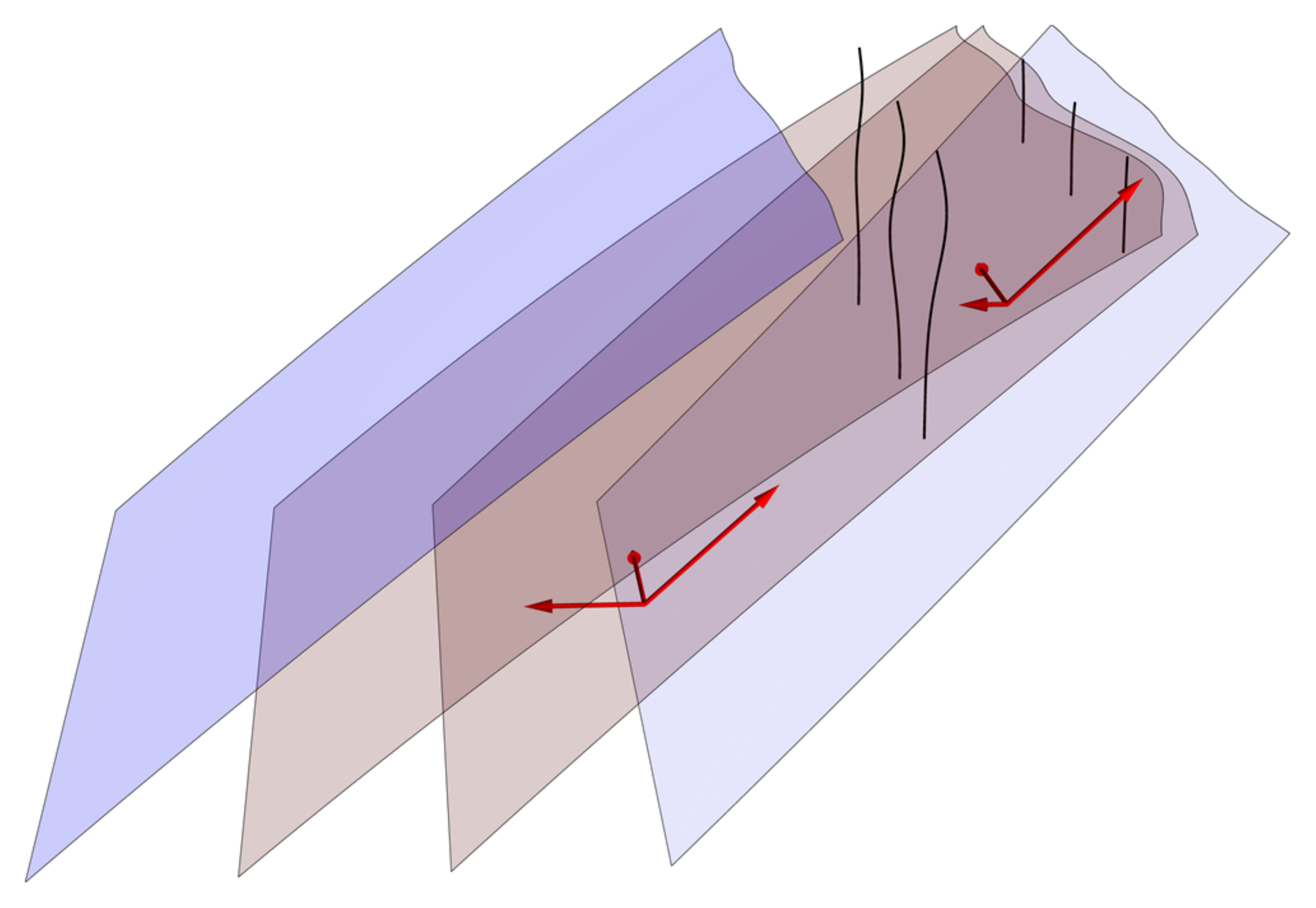} 
\put (87.1,55.6) {\large$\displaystyle \Lunit$}
\put (72,41) {\large$\displaystyle \Rad$}
\put (72.5,50) {\large$\displaystyle \GeoAng_1$}
\put (59.5,32) {\large$\displaystyle \Lunit$}
\put (39,18) {\large$\displaystyle \Rad$}
\put (46,28) {\large$\displaystyle \GeoAng_1$}
\put (51,13) {\large$\displaystyle \mathcal{P}_0^t$}
\put (37,13) {\large$\displaystyle \mathcal{P}_u^t$}
\put (7,13) {\large$\displaystyle \mathcal{P}_1^t$}
\put (22,25) {\large$\displaystyle \upmu \approx 1$}
\put (67,68) {\large$\displaystyle \upmu \ \mbox{\upshape near } 0$}
\put (15,6) {\large$\mbox{``interesting'' data}$}
\put (57,18) {\large \rotatebox{45}{$\displaystyle \mbox{very small data}$}}
\end{overpic}
\captionof{figure}{The vectorfield frame $\Fullset$ at two distinct points in 
$\mathcal{P}_u$ and the integral curves of $\Transport$,
with one spatial dimension suppressed.}
\label{F:FRAME}
\end{center}

		The elements of $\Fullset$ are designed to have good commutation properties with each other
		and also, as we describe below $\upmu \square_g$. In particular, one can show that
		we have the following
		schematic relations:\footnote{A more precise statement would indicate that
		the coefficients on RHS~\eqref{E:FULLSETFULLSETCOMMUTATIONYIELDSTANSET} depend
		on the second derivatives of the eikonal function, but we suppress this issue here.}
		\begin{align} \label{E:FULLSETFULLSETCOMMUTATIONYIELDSTANSET}
			[\Fullset, \Fullset] \sim \Tanset.
		\end{align}
		In the rest of the discussion, $Z$ denotes a generic element of $\Fullset$ and $\Singletan$
		denotes a generic element of $\Tanset$ or, more generally, a $\mathcal{P}_u$-tangent differential 
		operator.
		
		It is straightforward to derive the following relationships, which are key to understanding the shock formation, 
		where $\partial$ schematically denotes\footnote{Throughout, we use the notation $A \sim B$
		to imprecisely indicate that $A$ is well approximated by $B$.} 
		linear combinations of 
		the Cartesian coordinate partial derivative vectorfields:
		\begin{align} \label{E:GEOMETRICINTERMSOFCARTESIAN}
			\Singletan & \sim \partial, 
			&& \Rad \sim \upmu \partial.
		\end{align}
		We also note the complementary schematic relation
		\begin{align} \label{E:CARTESIANINTERMSOFGEOMETRIC}
			\upmu \partial \sim \Rad + \upmu \Singletan,
		\end{align}
		which we will refer to in Step (2).
		At the end of Step (3),
		we will clarify the role of 
		the second relation in \eqref{E:GEOMETRICINTERMSOFCARTESIAN} in tying the vanishing of 
		the inverse foliation density $\upmu$ (see Def.~\ref{D:UPMUDEF})
		to the blowup of the solution's Cartesian coordinate partial derivatives.
		In the proof of shock formation, one uses
		the elements of $\Fullset$ to differentiate 
		the equations and to obtain estimates
		for the solution's derivatives. The goal is to show
		that up to a sufficiently high order, 
		the $\Fullset$ derivatives of the solution 
		remain uniformly bounded, all the way up to the time of first vanishing of $\upmu$.
		Note that by \eqref{E:GEOMETRICINTERMSOFCARTESIAN}, we have
		$|\Singletan| = \mathcal{O}(1)$ while $|\Rad| = \mathcal{O}(\upmu)$.
		The latter estimate implies that deriving uniform bounds for the $\Rad$ derivatives of the solution
		yields only very weak information about the solution's $\mathcal{P}_u$-transversal derivatives
		in regions where $\upmu$ is small. 
		We now note that one can derive the relations
		$
		\displaystyle
		\Lunit = \frac{\partial}{\partial t}
		$,
		$
		\displaystyle
		\Rad 
			= 
			\frac{\partial}{\partial u}
			+
			\mbox{\upshape Error}	
		$,
		$
		\displaystyle
		\GeoAng_A  
		=
		\frac{\partial}{\partial \vartheta^A}
		+
		\mbox{\upshape Error}
		$,
		$A=1,2$,
		where $\mbox{\upshape Error}$ denotes small vectorfields.
		Hence, deriving estimates for the 
		$\Fullset$ derivatives of the solution is equivalent
		to deriving estimates for the derivatives of the solution relative 
		to the geometric coordinates.
		The elements of \eqref{E:COMMSET} are replacements
		for the geometric coordinate 
		partial derivative vectorfields \eqref{E:GEOMETRICCOORDINATEPARTIALDERIVATIVEVECTORFIELDS}
		that, as it turns out, enjoy better regularity properties.
		Specifically, an important point, which is not at all obvious,
		is that the elements of
		$
		\displaystyle
		\left\lbrace
			\frac{\partial}{\partial u},
			\frac{\partial}{\partial \vartheta^1},
			\frac{\partial}{\partial \vartheta^2}
		\right\rbrace
 		$,
		when commuted through the covariant wave operator $\square_g$
		from LHSs \eqref{E:VELOCITYWAVEEQUATION}-\eqref{E:RENORMALIZEDDENSITYWAVEEQUATION},
		generate error terms that lose a derivative and thus
		are uncontrollable at the top-order. 
		In contrast, the elements 
		$Z \in \Fullset$
		are adapted to the acoustical metric $g$ 
		in such a way that the commutator operator $[\upmu \square_g,Z]$ 
		generates controllable error terms.
		We note that one includes the factor of $\upmu$ in the previous commutator because 
		it leads to essential cancellations.
		However, achieving control of the commutator error terms
		at the top-order derivative level
		is difficult and in fact constitutes
		the main step in the proof.
		The difficulty is that the Cartesian components of $Z \in \Fullset$
		depend on the Cartesian coordinate partial derivatives of $u$, which we can schematically depict as follows:
		$Z^{\alpha} \sim \partial u$. Therefore, the regularity of the vectorfields $Z$ themselves
		depends on the regularity of the fluid solution through the dependence of the eikonal
		equation on the fluid variables. 
		In fact, some of the commutator terms generated by
		$[\upmu \square_g,Z]$ appear to suffer from the loss of a derivative.
		The derivative loss can be overcome using ideas originating in 
		\cites{dCsK1993,sKiR2003} and, in the context of shock formation, in \cite{dC2007}.
		However, as we explain in Step (7), one pays a steep price in overcoming the loss of a derivative:
		the only known procedure leads to degenerate estimates in which the high-order
		energies are allowed to blow up as $\upmu \to 0$. On the other hand, to close the proof
		and show that the shock forms, one must prove that the low-order energies remain
		bounded all the way up to the singularity. Establishing this hierarchy of energy estimates
		is the main technical step in the proof.
	 \item \textbf{(Multiple speeds and commuting geometric vectorfields through first-order operators).}
	 	The compressible Euler equations with entropy feature two kinds of characteristics: the acoustic characteristics
	 	$\mathcal{P}_u$ and the integral curves of the material derivative vectorfield $\Transport$;
	 	see Figure~\ref{F:FRAME}. 
	 	That is, the system features multiple characteristic speeds, which creates new difficulties compared to the case of
	 	the scalar wave equations treated in the works
	 	\cites{sA1999a,sA1999b,sA2001b,sA2002,dC2007,dCsM2014,jLjS2016a,jLjS2016b,jSgHjLwW2016,sMpY2017}. 
	 	Another new difficulty compared to the scalar wave equation case
	 	is the presence of the operators $\Flatdiv$ and $\Flatcurl$
	 	in the equations of Theorem~\ref{T:GEOMETRICWAVETRANSPORTSYSTEM}.
	 	The first proof of shock formation for a quasilinear hyperbolic
	 	system in more than one spatial dimension featuring multiple speeds
	 	and the operators $\Flatdiv$ and $\Flatcurl$
	 	was our prior work \cites{jLjS2016a,jLjS2016b} on the compressible Euler equations in the barotropic case.
	 	We now review the main difficulties corresponding to the presence multiple speeds
	 	and the operators $\Flatdiv$ and $\Flatcurl$. We will then explain how to
	 	overcome them; it turns out that essentially the same strategy can be used to handle all of these
	 	first-order operators.
	 	Since the formation of a shock is tied to the intersection of the wave characteristic $\mathcal{P}_u$
	 	(as we clarify in Step (5)),
	 	our construction of the geometric vectorfields $Z \in \Fullset$ from Step (1) was,
	 	by necessity, adapted to $g$;
	 	indeed, this seems to be the only way to ensure that the commutator terms $[\upmu \square_g,Z]$ 
	 	are controllable up to the shock.
	 	This begs the question of what kind of commutation error terms are generated
	 	upon commuting them through first-order operators such as $\Transport$, $\Flatdiv$, and $\Flatcurl$.
	 	The resolution was provided by the following key insight from \cites{jLjS2016a,jLjS2016b}:
	 	the elements of $\Fullset$ have just enough structure 
		such that their commutator with an 
		appropriately weighted, but otherwise arbitrary,
		first-order differential operator\footnote{Here, by first-order differential operator,
		we mean one equal to a regular function times a Cartesian coordinate partial derivative.} 
		produces controllable error terms, consistent with ``hiding the singularity'' 
		relative to the geometric coordinates
		at the lower derivative levels.
		Specifically, one can show that
		we have the schematic commutation relation
		\begin{align} \label{E:UPMUPARTIALZCOMMUTATOR}
			[\upmu \partial_{\alpha}, \Fullset] \sim \Rad 
			+ 
			\Singletan,
		\end{align}
		which is suggested by the schematic relations 
		\eqref{E:FULLSETFULLSETCOMMUTATIONYIELDSTANSET}
		and
		\eqref{E:CARTESIANINTERMSOFGEOMETRIC}.
		The important point is that RHS~\eqref{E:UPMUPARTIALZCOMMUTATOR}
		does not feature any singular factor of
		$
		\displaystyle
		\frac{1}{\upmu}
		$.
		
		The above discussion suggests the following strategy for treating the first-order
		equations of Theorem~\ref{T:GEOMETRICWAVETRANSPORTSYSTEM}:
		weight them with a factor of $\upmu$ 
		so that the principal part is of the schematic form $\upmu \partial$.
		Then by \eqref{E:UPMUPARTIALZCOMMUTATOR},
		upon commuting the weighted equation with elements of $\Fullset$,
		we generate only commutator terms that do not feature the
		$
		\displaystyle
		\frac{1}{\upmu}
		$.
		We stress that the property \eqref{E:UPMUPARTIALZCOMMUTATOR} does not generalize
		to second-order operators. That is, we have the schematic relation
		$
		\displaystyle
		[\upmu \partial_{\alpha} \partial_{\beta}, \Fullset] 
		\sim 
		\frac{1}{\upmu} 
		\Fullset \Fullset
		+ 
		\cdots
		$,
		which features uncontrollable factors of 
		$
		\displaystyle
		\frac{1}{\upmu}
		$.
		This is the reason that in deriving elliptic estimates 
		for the entropy $\Ent$, we work the 
		divergence and curl of the
		entropy gradient vectorfield 
		$\GradEnt^i = \partial_i \Ent$ instead
		of $\Delta \Ent$ 
		(see also Remark~\ref{R:NEEDFORGRADENTANDDIVCURL}); the $\Flatdiv$-$\Flatcurl$ formulation
		allows us to avoid commuting the elements of $\Fullset$ 
		through the (second-order) flat Laplacian $\Delta$ 
		and thus avoid the uncontrollable error terms.
		\item \textbf{($L^{\infty}$ bootstrap assumptions).}
		Formulate appropriate \emph{uniform} $L^{\infty}$ bootstrap assumptions for
		the $\Fullset$-derivatives of the solution, up to order approximately $10$,
		on a region on which the solution exists classically.
		In particular, these $\Fullset$ derivatives of the solution will not blow up,
		even as the shock forms. Just below equation \eqref{E:ONETIMECOMMUTEDNONDEGENERATEENERGY}, 
		we explain why the proof requires so many derivatives.
		The bootstrap assumptions are tensorial in nature
		and involve several parameters measuring the size of various directional
		derivatives of the solution. We will not discuss 
		the bootstrap assumptions in detail here.
		Instead, we simply note that they reflect our expectation that the solution remains 
		a small perturbation of a simple isentropic plane wave at appropriate $\Fullset$-derivative levels;
		readers may consult \cites{jLjS2016a,jLjS2016b} for more details on the bootstrap assumptions
		in the barotropic case and note that in our forthcoming work,
		we will make similar bootstrap assumptions, the new feature being 
		smallness assumptions on the departure of $\Ent$ from a constant
		and the derivatives of $\Ent$.
	\item \textbf{(The role of Theorem~\ref{T:STRONGNULL}).}
		We now clarify the importance of the good null structures revealed by Theorem~\ref{T:STRONGNULL},
		thereby fleshing out the discussion from Subsect.~\ref{SS:CONTEXT}.
		Let $\vec{V}$ denote the solution array \eqref{E:SOLUTIONARRAY}.	
		As we alluded to above, before commuting the equations
		of Theorem~\ref{T:GEOMETRICWAVETRANSPORTSYSTEM} with elements of $\Fullset$, 
		we first multiply the equations by a factor of $\upmu$.
		The main point is that by Theorem~\ref{T:STRONGNULL},
		all derivative-quadratic inhomogeneous terms (that is, the type \textbf{iii)} terms in theorem)
		in the $\upmu$-weighted equations can be decomposed in the following schematic form:
	\begin{align} \label{E:STRONGNULLDECOMPOSED}
	\upmu \partial \vec{V} \cdot \partial \vec{V} 
	= 
	\Rad \vec{V} \cdot \Singletan \vec{V}
	+
	\upmu \Singletan \vec{V} \cdot \Singletan \vec{V},
	\end{align}
	where $\Singletan$ is as in Step (1).
	The decomposition \eqref{E:STRONGNULLDECOMPOSED} 
	is precisely what is afforded by the strong null condition,
	which is available in view of Prop.~\ref{P:STANDARDNULLFORMSSATISFYSTRONGNULL}.
	The reader may have noticed that Def.~\ref{D:STRONGNULLCONDITION}
	of the strong null condition is based on decompositions relative to null frames,
	while the terms on RHS~\eqref{E:STRONGNULLDECOMPOSED} are decomposed relative to 
	the elements of $\Fullset$. That is, one needs some minor observations
	in order to translate the strong null condition into the statement \eqref{E:STRONGNULLDECOMPOSED}.
	The main idea is to consider the strong null condition 
	under a null frame \eqref{E:NULLFRAME}
	in which $\Lunit$ is the vectorfield from \eqref{E:COMMSET}
	and the vectorfields $\lbrace e_1, e_2 \rbrace$ have the same span as
	$\lbrace \GeoAng_1, \GeoAng_2 \rbrace$, in which case
	both of the sets $\lbrace \Lunit, e_1, e_2 \rbrace$
	and 
	$\lbrace \Lunit, \GeoAng_1, \GeoAng_2 \rbrace$
	span the tangent space of $\mathcal{P}_u$.
	From these considerations, it is easy to see that 
	given any derivative-quadratic term verifying 
	the strong null condition, we can decompose it 
	into factors such that each factor contains at least one $\mathcal{P}_u$-tangent differentiation,
	which is precisely what is indicated on RHS~\eqref{E:STRONGNULLDECOMPOSED}.
	In particular, on RHS~\eqref{E:STRONGNULLDECOMPOSED},
	there are no terms proportional
	to $\Rad \vec{V} \cdot \Rad \vec{V}$, which, by
	signature considerations, 
	would have to be multiplied by an \emph{uncontrollable factor of} $\displaystyle \frac{1}{\upmu}$
	that would blow up at the shock. Such a term, if present, would
	completely obstruct the goal of obtaining regular estimates for the 
	solution's low-level $\Fullset$ derivatives that hold all the way up to the shock.
	
	We have therefore explained the good structure of
	type \textbf{iii)} terms
	from Theorem~\ref{T:STRONGNULL}.
	The only other kind of inhomogeneous terms that one encounters in the 
	$\upmu$-weighted
	equations of Theorem~\ref{T:GEOMETRICWAVETRANSPORTSYSTEM}
	are at most linear in $\partial \vec{V}$, that is,
	$\upmu$-weighted versions of the type \textbf{i)} and \textbf{ii)} terms 
	from Theorem~\ref{T:STRONGNULL}.
	The linear terms
	$\upmu \partial \vec{V}$ can be decomposed (schematically) as
	\begin{align} \label{E:LINTERDECOMPOSED}
	\upmu \partial \vec{V}  
	= 
	\Rad \vec{V}
	+
	\upmu \Singletan \vec{V},
	\end{align}
	the key point being that
	RHS~\eqref{E:LINTERDECOMPOSED} does not feature any singular factor of
	$
	\displaystyle
	\frac{1}{\upmu}
	$.
	For this reason, all linear terms $\upmu \partial \vec{V}$
	remain uniformly bounded all the way up to the shock and are admissible
	within the scope of our approach.
	Similar remarks of course apply to terms that depend on $\vec{V}$ but not
	$\partial \vec{V}$.
		\item \textbf{(Tying the singularity formation to the vanishing of $\upmu$).}
		Derive the following evolution equation for $\upmu$, written in schematic form:\footnote{It turns out 
		that the coefficient of the term
		$
		\displaystyle
		\frac{\partial}{\partial u} v^1
		$
		on RHS~\eqref{E:UPMUSCHEMATICEVOLUTION} vanishes precisely in the case of the
		Chaplygin gas equation of state, which is
		$\displaystyle
		p = p(\varrho)=C_0-\frac{C_1}{\varrho}$, where $C_0\in \mathbb R$ and $C_1>0
		$. Since the term 
		$
		\displaystyle
		\frac{\partial}{\partial u} v^1
		$ is precisely the one that drives the vanishing of $\upmu$, our proof of shock formation
		does not apply for the Chaplygin gas. 
		This is connected to the following well-known fact:
		in one spatial dimension under the Chaplygin gas equation of state, 
		the compressible Euler equations form a \emph{totally linearly degenerate} 
		PDE system, which is not expected to exhibit shock formation; 
		see \cite{aM1984} for additional discussion on totally linearly degenerate PDEs.
		\label{FN:NOSHOCKSCHAPLYGIN}}
		\begin{align} \label{E:UPMUSCHEMATICEVOLUTION}
		\displaystyle
		\frac{\partial}{\partial t} \upmu
		\sim
		\frac{\partial}{\partial u} v^1
		+ 
		\mbox{\upshape Error}.
		\end{align}
		Then, using the $L^{\infty}$ bootstrap assumptions from Step (2),
		show that for data near that of a simple plane wave,
		$
		\displaystyle
		\frac{\partial}{\partial u} v^1
		$ is negative and approximately constant
		in time (relative to the geometric coordinates)
		and that $\mbox{\upshape Error}$ is small in $L^{\infty}$, all the way up to the shock.
		Thus, from \eqref{E:UPMUSCHEMATICEVOLUTION}, 
		we deduce that $\upmu$ will vanish in finite time.
		Moreover, since $\Rad v^1$ and 
		$
		\displaystyle
		\frac{\partial}{\partial u} v^1
		$
		agree up to small error terms and since
		$
		\displaystyle
		\frac{\partial}{\partial u} v^1
		$
		is strictly non-zero at any point where $\upmu$ vanishes,
		it follows from
		the second relation in \eqref{E:GEOMETRICINTERMSOFCARTESIAN}
		that
		$
		\displaystyle
		\left|
			\partial v^1
		\right|
		\sim
		\frac{1}{\upmu}
		$
		in a past neighborhood of any point where $\upmu$ vanishes.
		In particular, some Cartesian coordinate partial derivative of
		$v^1$ must blow up like 
		$
		\displaystyle
		\frac{1}{\upmu}
		$
		at points where $\upmu$ vanishes.
	\item  \textbf{(Pointwise estimates and sharp estimates for $\upmu$).}
		Commute all of the equations 
		of Theorem~\ref{T:GEOMETRICWAVETRANSPORTSYSTEM}
		up to top-order (up to approximately $20$ times)
		with the elements of $\Fullset$,
		and similarly for\footnote{Deriving estimates 
		for $\upmu$ and the $\Lunit^i$ is essentially equivalent to 
		deriving estimates for the first derivatives of the eikonal function, that is,
		for the first derivatives of solutions to the eikonal equation \eqref{E:FIRSTEIK}.
		For $\upmu$, this is apparent from equation \eqref{E:UPMUDEF}.} 
		the transport equations verified by $\upmu$ and the Cartesian components
		$\Lunit^i$, $(i=1,2,3)$. For brevity, we do not provide these transport equations in detail
		here (we schematically displayed the one for $\upmu$ in \eqref{E:UPMUSCHEMATICEVOLUTION}).
		Instead, we only note that the inhomogeneous terms in the 
		transport equations exhibit good null structures similar
		to the ones enjoyed by the simple type \textbf{i)} and type \textbf{ii)} terms
		from Theorem~\ref{T:STRONGNULL}.
		The reason that we must estimate the derivatives of $\upmu$ and $\Lunit^i$ is that
		they arise as source terms when we commute the 
		equations of Theorem~\ref{T:GEOMETRICWAVETRANSPORTSYSTEM}
		with the elements $Z$ of \eqref{E:COMMSET}.
		After commuting the equations, one uses the $L^{\infty}$ bootstrap assumptions 
		from Step (3) to derive suitable pointwise estimates for
		all of the error terms and inhomogeneous terms in the equations up to top-order.
		A key point is that all good null structures, such as the structure displayed in \eqref{E:STRONGNULLDECOMPOSED},
		are preserved under differentiations of the equations. Moreover,
		since the elements $Z \in \Fullset$ are adapted to $\upmu \square_g$,
		the commutator terms corresponding to the operator $[\upmu \square_g,Z]$ 
		also exhibit a similar good null structure.
		
		Another key step in the proof is to derive very sharp pointwise estimates
		for $\upmu$ capturing exactly how it vanishes. More precisely,
		through a detailed study of equation \eqref{E:UPMUSCHEMATICEVOLUTION},
		one can show that for the solutions under study,
		$
		\displaystyle
		\frac{\partial}{\partial t} \upmu
		$
		is \emph{quantitatively negative} 
		in regions where $\upmu$ is near $0$, which implies that $\upmu$ vanishes linearly.
		It turns out that these facts are crucial for closing the energy estimates.
		\item \textbf{(Energy estimates).}
		Using the pointwise estimates and the sharp estimates for $\upmu$
		from Step (6), 
		derive energy estimates up to top-order. This is the main technical step in the proof.
		Null structures such as 
		\eqref{E:STRONGNULLDECOMPOSED} are again critically 
		important for the energy estimates, since our energies 
		(described below)
		are designed to control error integrals that are generated by terms of the form RHS~\eqref{E:STRONGNULLDECOMPOSED} 
		and their higher-order analogs. To control some of the terms in the energy estimates,
		we also need elliptic estimates along $\Sigma_t$, which we describe in Step (8). 
		As a preliminary step,
		we now briefly describe, from the point of view of regularity, 
		why our proof fundamentally relies on 
		the equations \eqref{E:FLATDIVOFRENORMALIZEDVORTICITY}-\eqref{E:DIVGRADENT}
		and elliptic estimates.
		In reality, we need elliptic estimates
		only to control the solution's top-order derivatives, 
		that is, after commuting the equations many times with the elements of $\Fullset$.
		However, for convenience, 
		here we ignore the need to commute the equations
		and instead focus our discussion on how to derive a consistent 
		amount of Sobolev regularity for solutions
		to the non-commuted equations.
		In proving shock formation, we are primarily interested 
		in deriving estimates for solutions to the wave equations
		\eqref{E:VELOCITYWAVEEQUATION}-\eqref{E:RENORMALIZEDDENSITYWAVEEQUATION};
		given suitable estimates for their solutions, 
		the rest of the proof of the formation of the shock is relatively easy.
		To proceed, we first note that the inhomogeneous terms
		$\CurlofVortrenormalized$
		and
		$\DivofEntrenormalized$
		on the right-hand sides of the wave equations 
		\eqref{E:VELOCITYWAVEEQUATION}-\eqref{E:RENORMALIZEDDENSITYWAVEEQUATION}
		are (see Def.~\ref{D:RENORMALIZEDCURLOFSPECIFICVORTICITY}), from the point of view of regularity, 
		at the level of $\partial \Vortrenormalized$ and $\partial \GradEnt$
		plus easier terms that can be treated using energy estimates for wave equations
		(and that we will therefore ignore in the present discussion).
		On the other hand, 
		the transport equations
		\eqref{E:RENORMALIZEDVORTICTITYTRANSPORTEQUATION} and \eqref{E:GRADENTROPYTRANSPORT}
		for $\Vortrenormalized$ and $\GradEnt$ 
		have source terms that depend on
		$\partial v$ and $\partial \Densrenormalized$.
		This \emph{falsely} suggests that 
		$\Vortrenormalized$ and $\GradEnt$ have the same Sobolev regularity
		as $\partial v$ and $\partial \Densrenormalized$ which,
		from the point of view of regularity, would be inconsistent with 
		the inhomogeneous terms $\partial \Vortrenormalized$ and $\partial \GradEnt$ on 
		the right-hand side of the wave equations; the inconsistency would come from the fact 
		that energy estimates for the wave equations yield control only over
		$\partial v$ and $\partial \Densrenormalized$ and thus
		$\partial v$ and $\partial \Densrenormalized$ cannot have more $L^2$ regularity
		than the wave equation source terms 
		$\partial \Vortrenormalized$ and $\partial \GradEnt$.
		To circumvent this difficulty, one needs to rely on
		the $\Flatdiv$-$\Flatcurl$-transport-type equations
		\eqref{E:FLATDIVOFRENORMALIZEDVORTICITY}-\eqref{E:DIVGRADENT}
		and elliptic estimates to control $\partial \Vortrenormalized$ and $\partial \GradEnt$ 
		in $L^2(\Sigma_t)$, using only that
		$\partial v^i$ and $\partial \Densrenormalized$
		are in $L^2(\Sigma_t)$. We further explain this in Step (8).
		A key reason behind the viability of this approach is that
		even though equations \eqref{E:FLATDIVOFRENORMALIZEDVORTICITY}-\eqref{E:DIVGRADENT}
		are obtained by differentiating the transport equations
		\eqref{E:RENORMALIZEDVORTICTITYTRANSPORTEQUATION} and \eqref{E:GRADENTROPYTRANSPORT}
		(which feature inhomogeneous terms of the schematic form $\partial v$),
		\emph{the inhomogeneous terms on RHSs \eqref{E:FLATDIVOFRENORMALIZEDVORTICITY}-\eqref{E:DIVGRADENT}
		do not feature the terms $\partial^2 v$ or $\partial^2 \Densrenormalized$};
		this is a surprising structural feature of the equations that should not be taken for granted.
		
		The main difficulty that one encounters in the proof of shock formation
		is that the best energy estimates 
		that we know how to derive allow for the possibility that
		the high-order energies might blow up as the shock forms. 
		This makes it difficult to justify the uniform (non-degenerate) $L^{\infty}$
		bootstrap assumptions from Step (3), which play a crucial role in showing
		that the shock forms and in deriving the pointwise estimates from Step (6).
		It turns out that the maximum possible energy blowup rates can be
		expressed in terms of negative powers of
		\begin{align} \label{E:MUSTAR}
			\upmu_{\star}(t)
			& := \min_{\Sigma_t} \lbrace 1, \upmu \rbrace.
		\end{align}
	Note that the formation of the shock corresponds to $\upmu_{\star} \to 0$.
	Just below, we will roughly describe the hierarchy of energy estimates.
	The energy estimates involve energies 
	$\mathbb{E}_{(Wave);Top}$
	for the 
	``wave variables''
	$\lbrace 
			\Densrenormalized, v^1, v^2, v^3
	\rbrace
	$
	as well as energies
	$\mathbb{E}_{(Transport)}$
	for the ``transport variables''
	$\lbrace 
	\Ent, 
	\Vortrenormalized, 
	\GradEnt^1, \GradEnt^2, \GradEnt^3, 
	\CurlofVortrenormalized^1, \CurlofVortrenormalized^2, \CurlofVortrenormalized^3,
	\DivofEntrenormalized
\rbrace$.
	We use the notation $\mathbb{E}_{(Wave);Top}$ to denote a wave energy that
	controls the top-order $\Fullset$ derivatives\footnote{Actually, it is possible 
	(desirable even) to close the proof by deriving energy estimates only for the $\Tanset$-commuted equations;
	we will ignore this technical detail for the rest of the discussion.} 
	of the wave variables
	(here we are not specific about how many derivatives correspond to top-oder),
	$\mathbb{E}_{(Wave);Top-1}$ denote a just-below-top order wave energy,
	$\mathbb{E}_{(Wave);Mid}$ denote a mid-order wave energy
	(we also are not specific about how many derivatives correspond to mid-order),
	$\mathbb{E}_{(Wave);1}$ correspond to the energy after a single commutation,\footnote{It turns out that
 	we can avoid relying on energies corresponding to zero commutations.}
	and similarly for the transport equation energies.
	The hierarchy of energy estimates that one can derive roughly has the following structure,
	where $K \approx 20$ is a constant and 
	$\mathring{\upepsilon}$ is a small parameter representing the size of a seminorm that, 
	roughly speaking, 
	measures how far the initial data are from 
	the data of a simple isentropic plane wave: 
	\begin{subequations}
	\begin{align}
	\mathbb{E}_{(Wave);Top}(t),
	\,
	\mathbb{E}_{(Transport);Top}(t)
	& \lesssim \mathring{\upepsilon}^2 \upmu_{\star}^{-K}(t),
		\label{E:TOPDEGENERATEENERGY} \\
	\mathbb{E}_{(Wave);Top-1}(t),
	\,
	\mathbb{E}_{(Transport);Top-1}(t)
	& \lesssim \mathring{\upepsilon}^2 \upmu_{\star}^{2-K}(t),
		\label{E:JUSTBELOWTOPDEGENERATEENERGY} \\
	\mathbb{E}_{(Wave);Top-2}(t),
	\,
	\mathbb{E}_{(Transport);Top-2}(t)
	& \lesssim \mathring{\upepsilon}^2 \upmu_{\star}^{4-K}(t),
		\label{E:TWOBELOWTOPDEGENERATEENERGY} \\
	& \cdots,
		\\
	\mathbb{E}_{(Wave);Mid}(t),
	\,
	\mathbb{E}_{(Transport);Mid}(t)
	& \lesssim \mathring{\upepsilon}^2,
		\label{E:MIDNONDEGENERATEENERGY} 
		\\
	\mathbb{E}_{(Transport);1}(t)
	& \lesssim \mathring{\upepsilon}^2.
		\label{E:ONETIMECOMMUTEDNONDEGENERATEENERGY} 
\end{align}
\end{subequations}
		
		The difficult parts of the proof are controlling the maximum possible the top-order blowup rate
		$\upmu_{\star}^{-K}(t)$ as well as establishing the descent scheme 
		showing that the below-top-order energies become successively less degenerate
		until one reaches the level \eqref{E:MIDNONDEGENERATEENERGY}, 
		below which the energies do not blow up.
		Descent schemes of this type originated in the works
		\cites{sA1999a,sA1999b,sA2001b,sA2002,dC2007} of Alinhac and Christodoulou
		and have played a key role in all prior works on shock formation in more than one spatial dimension.
		From the non-degenerate energy estimates
		\eqref{E:MIDNONDEGENERATEENERGY}-\eqref{E:ONETIMECOMMUTEDNONDEGENERATEENERGY},
		Sobolev embedding, and a smallness assumption on the data-size parameter $\mathring{\upepsilon}$,
		one can justify (that is, improve) the non-degenerate $L^{\infty}$
		bootstrap assumptions from Step (3).
		To close the proof, we need the energies to remain uniformly bounded up the singularity 
		starting at a level
		representing, roughly, slightly more than half of the top-order number of derivatives.
		Consequently, the proof requires a lot of regularity, and top-order corresponds
		to commuting the equations roughly $20$ times with the elements of $\Fullset$.
		The precise numerology behind the hierarchy 
		\eqref{E:TOPDEGENERATEENERGY}-\eqref{E:ONETIMECOMMUTEDNONDEGENERATEENERGY} 
		is complicated, but the following two features seem fundamental:
		\textbf{i)} The top-order blowup rate $\upmu_{\star}^{-K}(t)$, since,
		as we explain below, the blowup-exponent $K$ is tied to universal structural
		constants in the equations that are independent of the number of times that we commute them.
		\textbf{ii)} An improvement of precisely $\upmu_{\star}^2(t)$ at each step in the descent,
		which is tied to the fact that $\upmu_{\star}(t)$ vanishes linearly
		(as we mentioned at the end of Step (6)).
		 
		To construct energies that result in controllable error terms,
		we must weight various terms in it with factors of $\upmu$,
		a difficulty that lies at the heart of the analysis.
		For example, the energies $\mathbb{E}_{(Wave)}$
		for the wave variables 
		$\Psi \in 
		\lbrace 
			\Densrenormalized, v^1, v^2, v^3
		\rbrace$
		are constructed\footnote{With the help of the vectorfield multiplier method,
		based on the energy-momentum tensor for wave equations and the multiplier
		vectorfield $\Mult := (1 + 2 \upmu) \Lunit + 2 \Rad$; see \cites{jSgHjLwW2016,jLjS2016a,jLjS2016b}.} 
		so that, at the level of the undifferentiated equations, 
		we have, relative to the geometric coordinates \eqref{E:GEOMETRICCOORDINATEs}
		and the vectorfields in \eqref{E:COMMSET}, the
		following schematic relation:
		\begin{align}\label{E:WAVEENERGIESSCHEMATICSTRENGTH}
		\mathbb{E}_{(Wave)}[\Psi](t)
		& \sim  
			\int_{\Sigma_t} 
				\left\lbrace 
					(\Rad\Psi)^2 
					+ 
					\upmu 
					\left(
						(\Lunit \Psi)^2
						+
						(\GeoAng_1 \Psi)^2
						+
						(\GeoAng_2 \Psi)^2
					\right)\right\rbrace 
			\, d \vartheta^1 d \vartheta^2 du.
\end{align}
The energy
$\mathbb{E}_{(Wave);Top}(t)$ 
on LHS~\eqref{E:TOPDEGENERATEENERGY}
schematically represents one of the quantities
$\mathbb{E}_{(Wave)}[\Fullset^{N_{top}} \Psi](t)$,
where $N_{top} \approx 20$ is the maximum number of times 
that we need to commute the equations
in order to close the estimates.
The factor of $\upmu$ in \eqref{E:WAVEENERGIESSCHEMATICSTRENGTH} is chosen so that only controllable
error terms are generated in the energy identities 
(it is true, though not obvious, that RHS~\eqref{E:WAVEENERGIESSCHEMATICSTRENGTH} has the right strength).
Note that some components of the energies become very weak near the shock
(that is, in regions where $\upmu$ is small),
namely the products on RHS~\eqref{E:WAVEENERGIESSCHEMATICSTRENGTH} that 
are $\upmu$-weighted.
This makes it difficult to control the
non-$\upmu$-weighted
error terms that one encounters 
in the energy identities.
To control such ``strong'' error terms, one uses, 
in addition to the energies \eqref{E:WAVEENERGIESSCHEMATICSTRENGTH},
energies along $\mathcal{P}_u$ (known as null fluxes) as well as 
a coercive friction-type spacetime integral, which is
available because 
$
\displaystyle
\frac{\partial}{\partial t} \upmu
$
is quantitatively negative in the difficult region where $\upmu$ is small (as we described in Step (6)).
These aspects of the proof, though of fundamental importance,
have been well-understood since Christodoulou's work \cite{dC2007} 
and are described in more detail in \cites{jSgHjLwW2016,jLjS2016a,jLjS2016b};
for this reason, we will not further discuss these issues here.

We must also derive energy estimates for the transport equations
in Theorem~\ref{T:GEOMETRICWAVETRANSPORTSYSTEM}.
Specifically, to control the transport variables 
$\Psi \in 
\lbrace 
	\Ent, 
	\Vortrenormalized, 
	\GradEnt^1, \GradEnt^2, \GradEnt^3, 
	\CurlofVortrenormalized^1, \CurlofVortrenormalized^2, \CurlofVortrenormalized^3,
	\DivofEntrenormalized
\rbrace$,
we rely on energies with the following strength:
\begin{align}\label{E:TRANSPORTENERGIESSCHEMATICSTRENGTH}
		\mathbb{E}_{(Transport)}[\Psi](t)
		& \sim  
			\int_{\Sigma_t} 
				\upmu \Psi^2
			\, d \vartheta^1 d \vartheta^2 du.
\end{align}
As in the case of the wave variable energies,
the factor of $\upmu$ in \eqref{E:TRANSPORTENERGIESSCHEMATICSTRENGTH} is chosen 
so that only controllable
error terms are generated in the energy identities.

We now sketch the main ideas behind why the top-order energy estimate
\eqref{E:TOPDEGENERATEENERGY} is so degenerate. 
We will focus only on the wave equation energy estimates since the transport
equation energy estimates are much easier to derive.\footnote{It turns out, however, 
that the $g$-timelike nature of the transport operator $\Transport$
(as shown by Lemma~\ref{L:BASICPROPERTIESOFTRANSPORT})
is important for the transport equation energy estimates; see \cite{jLjS2016a} for further discussion on this point.}
The basic difficulty
is that on the basis of energy identities, 
the following integral inequality is the best that we are able to obtain:
\begin{align} \label{E:TOPENERGYGRONWALLREADY}
	\mathbb{E}_{(Wave);Top}(t)
	& 
	\leq C \mathring{\upepsilon}^2
	+
	A
	\int_{s=0}^t
			\sup_{\Sigma_s} 
			\left|
				\frac{\frac{\partial}{\partial s} \upmu}{\upmu}
			\right|
		\mathbb{E}_{(Wave);Top}(s)
	\, ds
	+
	\cdots,
\end{align}
where $A$ is a universal positive constant that is
\textbf{independent of the equation of state and the number of times that the equations are commuted}
and $\cdots$ denotes similar or less degenerate error terms.
Below we explain the origin of the degenerate factor
$
\displaystyle
\left|
	\frac{\frac{\partial}{\partial s} \upmu}{\upmu}
\right|
$
on RHS~\eqref{E:TOPENERGYGRONWALLREADY}, whose presence is tied
to an issue that we highlighted earlier: 
the needed top-order regularity properties of the eikonal function are difficult to derive.
To apply Gronwall's inequality to the inequality \eqref{E:TOPENERGYGRONWALLREADY},
we need the following estimate:
\begin{align} \label{E:KEYGRONWALLFACTORINTEGRALBOUND}
\int_{s=0}^t
	\sup_{\Sigma_s} 
	\left|
		\frac{\frac{\partial}{\partial s} \upmu}{\upmu}
	\right|
\, ds
\sim 
\left| \ln \upmu_{\star}^{-1} \right|.
\end{align}
The proof of \eqref{E:KEYGRONWALLFACTORINTEGRALBOUND}
can be derived with the help
the estimates
\begin{align}
	\upmu_{\star}(t) 
	& \sim 1 - \TrandatasizeWithFactor t,
		\label{E:UPMUSTARSHARP}
		\\
	\left\|
		\frac{\partial}{\partial t} \upmu
	\right\|_{L^{\infty}(\Sigma_t)}
	& \sim \TrandatasizeWithFactor,
	\label{E:LUNITUPMUSTARSHARP}
\end{align}
where $\TrandatasizeWithFactor > 0$ is a data-dependent parameter
that, roughly speaking, measures 
the $L^{\infty}$ size of the 
term 
$
\displaystyle
\frac{\partial}{\partial u} v^1
$
on RHS~\eqref{E:UPMUSCHEMATICEVOLUTION}.
We note that to close the proof, 
one needs to consider initial data such that $\mathring{\upepsilon}$ is small relative to $\TrandatasizeWithFactor$
(though $\TrandatasizeWithFactor$ may be small in an absolute sense).
We also note that the estimates \eqref{E:UPMUSTARSHARP}-\eqref{E:LUNITUPMUSTARSHARP}
fall under the scope of the sharp estimates for $\upmu$ from Step (6).
Moreover, we note that the fact that $\upmu_{\star}$ vanishes linearly
is important for deriving \eqref{E:KEYGRONWALLFACTORINTEGRALBOUND}.
Finally, we note that 
\eqref{E:KEYGRONWALLFACTORINTEGRALBOUND} is just a quasilinear version of
the caricature estimate
$
\displaystyle
\int_{s=t}^1 
	\frac{1}{s}
\, ds
\lesssim \ln t
$,
in which $s=0$ represents the time of first vanishing of $\upmu_{\star}$ and $s=1$ represents 
the ``initial'' data time.

After we have derived \eqref{E:TOPENERGYGRONWALLREADY}
and
\eqref{E:KEYGRONWALLFACTORINTEGRALBOUND},
we can apply Gronwall's inequality (ignoring the terms $\cdots$ on RHS~\eqref{E:TOPENERGYGRONWALLREADY})
to obtain the following bound:
\begin{align} \label{E:TOPENERGYGRONWALLED}
	\mathbb{E}_{(Wave);Top}(t)
	& 
	\leq C 
	\mathring{\upepsilon}^2 
	\upmu_{\star}^{-A}.
\end{align}
The bound \eqref{E:TOPENERGYGRONWALLED}
is essentially the top-order energy estimate \eqref{E:TOPDEGENERATEENERGY}.
However, in reality, the blowup-exponent $K$ on RHS~\eqref{E:TOPDEGENERATEENERGY}
is larger than the blowup-exponent $A$ on RHS~\eqref{E:TOPENERGYGRONWALLED}
because the correct estimate \eqref{E:TOPDEGENERATEENERGY} is influenced by additional
degenerate error terms that we have ignored in deriving \eqref{E:TOPENERGYGRONWALLED}.

We now briefly explain the origin of the difficult error integral on
RHS~\eqref{E:TOPENERGYGRONWALLREADY}. 
Let $\Psi$ schematically denote any of the wave variables
$\lbrace 
	\Densrenormalized, v^1, v^2, v^3
\rbrace
$.
The difficulty arises from the worst commutator error terms that are generated when one commutes
the elements of $\Fullset$ 
(see \eqref{E:COMMSET})
through the wave operator $\upmu \square_g$.
To explain the main ideas, we consider only the wave equation verified by
$\GeoAng^N \Psi$, where $\GeoAng^N$ schematically denote an order $N$ differential
operator corresponding to repeated differentiation with
respect to elements of the set $\lbrace \GeoAng_1, \GeoAng_2 \rbrace$;
similar difficulties arise upon commuting $\upmu \square_g$ with other strings of vectorfields from $\Fullset$.
Specifically, one can show that upon commuting any of the 
$\upmu$-weighted wave equations
\eqref{E:VELOCITYWAVEEQUATION}-\eqref{E:RENORMALIZEDDENSITYWAVEEQUATION}
with $\GeoAng^N$, we obtain an inhomogeneous wave equation of the schematic form
\begin{align}  \label{E:NTIMESCOMMUTEDWAVEMAINTRCHITERM}
	\upmu \square_g \GeoAng^N \Psi
	& =  (\Rad \Psi) \GeoAng^N \mytr \upchi
		+ 
		\cdots.
\end{align}
The term $\upchi$ on RHS~\eqref{E:NTIMESCOMMUTEDWAVEMAINTRCHITERM}
is the null second fundamental form of the co-dimension-two tori
$\mathcal{P}_u \cap \Sigma_t$,
that is, the symmetric type $\binom{0}{2}$
tensorfield with components
$\upchi_{\CoordAng_A \CoordAng_B} = g(\D_{\CoordAng_A} \Lunit, \CoordAng_B)$,
where $\D$ is the Levi-Civita connection of $g$.
Moreover, $\mytr \upchi$ is the trace of $\upchi$
with respect to the Riemannian metric $\gsphere$ induced on $\mathcal{P}_u \cap \Sigma_t$ by $g$.
Geometrically, $\mytr \upchi$ 
is the null mean curvature of the $g$-null hypersurfaces $\mathcal{P}_u$.
Analytically, $\GeoAng^N \mytr \upchi$ is a difficult commutator term in which the
maximum possible number of derivatives falls on 
the eikonal function
(recall that $\Lunit \sim \partial u$ and thus $\upchi \sim \partial^2 u$).
As we mentioned earlier, the main difficulty is that a naive
treatment of terms involving the maximum number of derivatives of the eikonal function
leads to the loss of a derivative.
This difficulty is visible directly from the evolution equation 
satisfied by $\GeoAng^N \mytr \upchi$,
which can be derived from geometric considerations\footnote{The precise version of equation
\eqref{E:RAYCHEVOLUTIONS} that one needs in a detailed proof is essentially
Raychaudhuri's equation for the ``$\Lunit \Lunit$'' component of the Ricci curvature of the 
acoustical metric $g$.}
and which takes the following schematic form
(recall that $\Singletan$ schematically denotes elements of the set \eqref{E:TANSET}):
\begin{align} \label{E:RAYCHEVOLUTIONS}
	\frac{\partial}{\partial t}
	\GeoAng^N \mytr \upchi
	& = 
	\angLap \GeoAng^N \Psi
	+
	\frac{\partial}{\partial t} \Singletan \GeoAng^N \Psi
	+ 
	l.o.t,	
\end{align}
where $\angLap$ is the covariant Laplacian induced on
induced on $\mathcal{P}_u \cap \Sigma_t$ by $g$
and $l.o.t.$ denotes terms involving $\leq 1$ derivative of $\Psi$.
The difficulty with equation \eqref{E:RAYCHEVOLUTIONS} 
is that the two explicitly displayed terms
on RHS~\eqref{E:RAYCHEVOLUTIONS} depend on
$N+2$ derivatives of $\Psi$, which is one more 
than we can control by energy estimates for the wave equation \eqref{E:NTIMESCOMMUTEDWAVEMAINTRCHITERM}.
That is, the two terms on RHS~\eqref{E:RAYCHEVOLUTIONS} seem to lose a derivative.
To overcome this difficulty for the second term
$
\displaystyle
\frac{\partial}{\partial t} \Singletan \GeoAng^N \vec{\Psi}
$,
we can simply bring it over to the left so that equation \eqref{E:RAYCHEVOLUTIONS} becomes
$
\displaystyle
\frac{\partial}{\partial t}
\left\lbrace
	\GeoAng^N \mytr \upchi
	-
	\Singletan \GeoAng^N \vec{\Psi}
\right\rbrace
=
\angLap \GeoAng^N \Psi
	+ 
	l.o.t
$.
To handle the term
$\angLap \GeoAng^N \Psi$,
we can use a similar but more complicated 
strategy first employed in \cite{sKiR2003} in the
context of low regularity well-posedness and later by Christodoulou
\cite{dC2007} in the context of shock formation:
by decomposing the principal parts of the $\GeoAng^N$-commuted wave equations
\eqref{E:VELOCITYWAVEEQUATION}-\eqref{E:RENORMALIZEDDENSITYWAVEEQUATION},
we can obtain the following algebraic relation, 
written in schematic form:
$
\displaystyle
\upmu \angLap \GeoAng^N \Psi
=
\frac{\partial}{\partial t}
Z \GeoAng^N \Psi
+
l.o.t.
$,
where $Z \in \Fullset$.
Bringing the perfect time derivative 
term
$
\displaystyle
\frac{\partial}{\partial t}
Z \GeoAng^N \Psi
$,
over to LHS~\eqref{E:RAYCHEVOLUTIONS} as well,
we obtain
\begin{align} \label{E:RENORMALZIEDRAYCHEVOLUTIONS}
	\frac{\partial}{\partial t}
	\left\lbrace
		\upmu \GeoAng^N \mytr \upchi
		-
		Z \GeoAng^N \Psi
		-
		\upmu \Singletan \GeoAng^N \Psi
	\right\rbrace
	& = 
	l.o.t.
\end{align}
The key point is that all inhomogeneous terms on RHS~\eqref{E:RENORMALZIEDRAYCHEVOLUTIONS}
now feature an allowable amount of regularity,\footnote{In reality, in three or more spatial dimensions,
there remain some additional terms on RHS~\eqref{E:RENORMALZIEDRAYCHEVOLUTIONS}
that depend on the top-order derivatives of the eikonal function. These terms are schematically of the form
of the top-order derivatives of the trace-free part of $\upchi$, traditionally denoted by $\hat{\upchi}$
(note that $\hat{\upchi} \equiv 0$ in two spatial dimensions).
From the prior discussion, one might think that these terms result in the loss of a derivative and obstruct
the closure of the energy estimates. However, it turns out that one can 
avoid the derivative loss for $\hat{\upchi}$ by exploiting 
geometric Codazzi-type identities and elliptic estimates
on the co-dimension-two tori $\mathcal{P}_u \cap \Sigma_t$. Such elliptic estimates for $\hat{\upchi}$ 
have been well-understood since \cite{dCsK1993} and, in the context of shock formation, since \cite{dC2007}. For this reason, we
do not further discuss this technical issue here.} 
which implies that we can gain back the 
derivative by working with the ``modified'' quantity
\begin{align} \label{E:TRCHIMOD}		
\upmu \GeoAng^N \mytr \upchi
		-
		Z \GeoAng^N \Psi
		-
		\upmu \Singletan \GeoAng^N \Psi.
\end{align}
We have therefore explained how to avoid the derivative loss 
that was threatened by the term $\GeoAng^N \mytr \upchi$
on RHS~\eqref{E:NTIMESCOMMUTEDWAVEMAINTRCHITERM}.
However, our approach comes with a large price: 
the inhomogeneous term on RHS~\eqref{E:NTIMESCOMMUTEDWAVEMAINTRCHITERM}
involves the factor $\GeoAng^N \mytr \upchi$,
while \eqref{E:RENORMALZIEDRAYCHEVOLUTIONS}
yields an evolution equation only for the modified version 
of $\upmu \GeoAng^N \mytr \upchi$ stated in \eqref{E:TRCHIMOD}; 
this discrepancy factor of 
$\upmu$ is what leads to the dangerous factor of
$
\displaystyle
\frac{1}{\upmu}
$
on RHS~\eqref{E:TOPENERGYGRONWALLREADY}.
Moreover, from a careful analysis that takes into
account the evolution equation for $\upmu$
as well as the precise structure of the factor $\Rad \Psi$
on RHS~\eqref{E:NTIMESCOMMUTEDWAVEMAINTRCHITERM}
and the terms on LHS~\eqref{E:RENORMALZIEDRAYCHEVOLUTIONS},
one can deduce the presence of the factor
$
\displaystyle
\frac{\partial}{\partial s} \upmu
$
on RHS~\eqref{E:TOPENERGYGRONWALLREADY},
whose precise form is important in the proof of 
the estimate \eqref{E:KEYGRONWALLFACTORINTEGRALBOUND}.
We have therefore explained the main ideas behind the origin of the
main error integral displayed on RHS~\eqref{E:TOPENERGYGRONWALLREADY}.

Having provided an overview of the derivation of the top-order energy estimate \eqref{E:TOPDEGENERATEENERGY},
we now describe why the below-top-order energies become successively less singular
as one descends below top-order, that is, how to implement the energy estimate descent scheme
resulting in the estimates \eqref{E:JUSTBELOWTOPDEGENERATEENERGY}-\eqref{E:ONETIMECOMMUTEDNONDEGENERATEENERGY}; 
recall that the non-degenerate energy estimates \eqref{E:MIDNONDEGENERATEENERGY}-\eqref{E:ONETIMECOMMUTEDNONDEGENERATEENERGY} 
are needed to improve, by Sobolev embedding and a small-data assumption, 
the $L^{\infty}$ bootstrap assumptions from Step (2), which are central to the whole process.
A key ingredient in the energy estimate descent scheme is the following estimate, valid for constants $b > 0$, 
which shows that integrating the singularity in time reduces its strength:
\begin{align} \label{E:REDUCETHESTRENTHOFTHESINGULARITY}
	\int_{s=0}^t
			\upmu_{\star}^{-b}(s)
		\, ds
		\lesssim
		\upmu_{\star}^{1-b}(t).
\end{align}
The estimate \eqref{E:REDUCETHESTRENTHOFTHESINGULARITY}
is easy to obtain thanks to the sharp information that we have about the linear vanishing rate
of $\upmu_{\star}$ (see \eqref{E:UPMUSTARSHARP}).
We note that \eqref{E:REDUCETHESTRENTHOFTHESINGULARITY} is just a quasilinear version of the estimate 
$\int_{s=t}^1 s^{-b} \, ds \lesssim t^{1 - b}$ for $0 < t < 1$, 
where $s=0$ represents the vanishing of $\upmu_{\star}$.
A second key ingredient in implementing the descent scheme is to exploit that
below top-order, we can estimate the difficult term $\GeoAng^N \mytr \upchi$ on RHS~\eqref{E:NTIMESCOMMUTEDWAVEMAINTRCHITERM}
in a different way; recall that this term was the main driving force behind the degenerate top-order
energy estimates. Specifically, for $N$ below top-order,  
we can directly estimate $\GeoAng^N \mytr \upchi$ by integrating the transport equation
\eqref{E:RAYCHEVOLUTIONS} in time, without going through the procedure that
led to equation \eqref{E:RENORMALZIEDRAYCHEVOLUTIONS} in the top-order case.
This approach results in a loss of one derivative caused by the two explicitly displayed
terms on RHS~\eqref{E:RAYCHEVOLUTIONS} and therefore couples the below-top-order energy estimates
to the top-order ones. However, the integration in time allows one to employ the estimate
\eqref{E:REDUCETHESTRENTHOFTHESINGULARITY}, which implies that 
below top-order, $\GeoAng^N \mytr \upchi$ is less singular than RHS~\eqref{E:RAYCHEVOLUTIONS};
this is the crux of the descent scheme.
We also note that this procedure allows one to avoid the difficult factor of
$\upmu$, which in the top-order case appeared on LHS~\eqref{E:RENORMALZIEDRAYCHEVOLUTIONS} 
and which drove the blowup-rate of the top-order energies.

We have thus explained one step in the descent. One can continue the descent, noting
that at each stage, we can directly estimate 
the difficult term $\GeoAng^N \mytr \upchi$ by integrating the transport equation
\eqref{E:RAYCHEVOLUTIONS} in time and allowing the loss of one derivative
coming from the terms on RHS~\eqref{E:RAYCHEVOLUTIONS}.
This procedure couples the energy estimates at a given derivative level to
the estimates for the (already controlled) next-highest-energy, 
but it nonetheless allows one to derive the desired improvement in the energy blowup-rate by downward induction,
thanks to the integration in time and the estimate \eqref{E:REDUCETHESTRENTHOFTHESINGULARITY}.
\item \textbf{(Elliptic estimates along $\Sigma_t$).}
We now confront an important issue that we ignored in Step (7):
to close the energy estimates,
we are forced to control some of the inhomogeneous terms in the equations
using elliptic estimates along $\Sigma_t$. 
This major difficulty is not present in works on 
shock formation for wave equations; it was encountered
for the first time in our earlier work on shock formation
\cites{jLjS2016a} for barotropic fluids with vorticity.
A key aspect of the difficulty is that elliptic estimates
along $\Sigma_t$ necessarily involve controlling the derivatives
of the solution in a direction transversal to 
the acoustic characteristics $\mathcal{P}_u$, that is,
in the singular direction.
We need elliptic estimates to control 
the source terms 
on RHSs
\eqref{E:EVOLUTIONEQUATIONFLATCURLRENORMALIZEDVORTICITY} 
and 
\eqref{E:TRANSPORTFLATDIVGRADENT}
that depend on 
$\underline{\partial} \Vortrenormalized$
and
$\underline{\partial} \GradEnt$,
where $\underline{\partial}$ denotes the gradient with respect to the Cartesian spatial coordinates.
More precisely, we need the elliptic estimates only at the top derivative level,
but we will ignore that issue here and focus instead on the degeneracy of the elliptic estimates
with respect to $\upmu$.

The elliptic estimates can easily be \emph{derived} relative to the Cartesian coordinates
and the Euclidean volume form $dx^1 dx^2 dx^3$ on $\Sigma_t$. 
However, in order to compare the strength of the elliptic estimates to that of the
wave energies
\eqref{E:WAVEENERGIESSCHEMATICSTRENGTH}
and the transport energies
\eqref{E:TRANSPORTENERGIESSCHEMATICSTRENGTH},
we need to understand the relationship between
the Euclidean volume form 
and the volume form $du d \vartheta^1 d \vartheta^2$ featured in the energies.
Specifically, by studying the Jacobian of the change of variables map between
the geometric and the Cartesian coordinates, 
one can show that there is an
$\mathcal{O}(\upmu)$ discrepancy factor between the two forms:
\begin{align} \label{E:VOLFORMRELATION}
	dx^1 dx^2 dx^3 
	& \sim \upmu \, du d \vartheta^1 d \vartheta^2.
\end{align}
In the rest of this discussion, 
our notion of an $L^2(\Sigma_t)$ norm
is in terms of the volume form $du d \vartheta^1 d \vartheta^2$.
That is, we set
\begin{align} \label{E:GEOMETRICL2NORMALONGSIGMAT}
\| f \|_{L^2(\Sigma_t)}^2
:=
\int
	f^2(t,u,\vartheta^1,\vartheta^2)
\, du d \vartheta^1 d \vartheta^2.
\end{align}

We now explain some aspects of the elliptic estimates that 
yield control over $\underline{\partial} \GradEnt$, 
where as before, $\underline{\partial}$ denotes the gradient with respect to the Cartesian spatial coordinates.
One also needs similar elliptic estimates to obtain control over
$\underline{\partial} \Vortrenormalized$, but we omit those details;
see \cite{jLjS2016a} for an overview of how to control
$\underline{\partial} \Vortrenormalized$ in the barotropic case.
Our elliptic estimates are essentially standard div-curl
estimates of the form
\[
	\int_{\Sigma_t}
		\left|
			\underline{\partial} \GradEnt 
		\right|^2
	\, dx^1 dx^2 dx^3 
	\lesssim
	\int_{\Sigma_t}
		\left|
			\Flatdiv \GradEnt 
		\right|^2
	\, dx^1 dx^2 dx^3
	+
	\int_{\Sigma_t}
		\left|
			\Flatcurl \GradEnt 
		\right|^2
	\, dx^1 dx^2 dx^3.
\]
With the help of \eqref{E:VOLFORMRELATION} and \eqref{E:GEOMETRICL2NORMALONGSIGMAT},
we can re-express the above div-curl estimate as follows:
\begin{align} \label{E:STANDARDHODGE}
	\| \sqrt{\upmu} \underline{\partial} \GradEnt \|_{L^2(\Sigma_t)}
	& 
	\lesssim
	\| \sqrt{\upmu} \Flatdiv \GradEnt \|_{L^2(\Sigma_t)}
	+
	\| \sqrt{\upmu} \Flatcurl \GradEnt \|_{L^2(\Sigma_t)}.
\end{align}

We now explain the role that \eqref{E:STANDARDHODGE} plays in closing
the energy estimates. Our main goal is to show how to derive the 
bound 
\begin{align} \label{E:FINALSTANDARDHODGE}
	\| \sqrt{\upmu} \underline{\partial} \GradEnt \|_{L^2(\Sigma_t)}^2
	& 
	\lesssim
	\mathring{\upepsilon}^2 \upmu_{\star}^{- \widetilde{m}}(t)
	+ 
	\cdots,
\end{align}
where $\widetilde{m}$ is a \emph{small} positive constant and 
$\cdots$ denote error terms that can be controlled without elliptic estimates
(for example, via the wave energies).
We note that since \eqref{E:TOPDEGENERATEENERGY} implies that the top-order wave energies
can be very degenerate, some of the terms in $\cdots$ on RHS~\eqref{E:FINALSTANDARDHODGE}
can in fact blow up at a much worse rate than the one $\upmu_{\star}^{- \widetilde{m}}(t)$
that we have explicitly displayed. 
The point of writing the estimate 
for $\| \sqrt{\upmu} \underline{\partial} \GradEnt \|_{L^2(\Sigma_t)}^2$
in the form \eqref{E:FINALSTANDARDHODGE} is that
this form emphasizes the following point:
\emph{the self-interaction terms in the elliptic estimates are not the ones
driving the blowup rate of the top-order derivatives of} $\GradEnt$;
Instead the blowup-rate of 
$\| \sqrt{\upmu} \underline{\partial} \GradEnt \|_{L^2(\Sigma_t)}^2$
is driven by the blowup-rate of
the top-order derivatives of the wave variables
$\lbrace 
			\Densrenormalized, v^1, v^2, v^3
	\rbrace
$,
which are hidden in the $\cdots$ terms on RHS~\eqref{E:FINALSTANDARDHODGE}.
It turns out that as a consequence, 
the blowup rates for the top-order wave energies are exactly the same 
as they are in the isentropic irrotational case.
That is, our approach to energy estimates yields the same blowup-exponent $K$
in the energy hierarchy \eqref{E:TOPDEGENERATEENERGY}-\eqref{E:ONETIMECOMMUTEDNONDEGENERATEENERGY}
compared to the exponent that our approach would yield in the
isentropic irrotational case.

To explain how to derive \eqref{E:FINALSTANDARDHODGE},
we start by discussing energy 
estimates for the transport equation \eqref{E:TRANSPORTFLATDIVGRADENT}
for $\DivofEntrenormalized$. 
We again remind the reader that the elliptic estimate approach to deriving 
\eqref{E:FINALSTANDARDHODGE} is needed mainly at the top-order, but for convenience, we discuss here only
the non-differentiated equations.
Specifically, by deriving standard transport equation energy estimates for the
weighted equation
$\upmu \times \mbox{\eqref{E:TRANSPORTFLATDIVGRADENT}}$
and by using the $L^{\infty}$ bootstrap assumptions of Step (2)
(which in particular can be used to derive the bound $\| \upmu \partial_a v^b \|_{L^{\infty}(\Sigma_t)} \lesssim 1$),
one can obtain the following integral inequality:
\begin{align} \label{E:ENTROPYDIVERGENCETRANSPORTEQUATIONESTIMATE}
	\| \sqrt{\upmu} \DivofEntrenormalized \|_{L^2(\Sigma_t)}^2
	& \leq
		k
		\int_{s=0}^t
			\frac{1}{\upmu_{\star}(s)}
			\| \sqrt{\upmu} \underline{\partial} \GradEnt \|_{L^2(\Sigma_s)}^2	
		\, ds
		+
		\cdots.
\end{align}
In \eqref{E:ENTROPYDIVERGENCETRANSPORTEQUATIONESTIMATE},
$\cdots$ denotes simpler error terms that can be treated without elliptic estimates
and, by judicious use of Young's inequality, it can be arranged that $k$ is a small positive constant.\footnote{$k$ can be chosen to be small by using Young's inequality  
in the form 
$
\displaystyle
AB \lesssim k A^2 + \frac{1}{k} B^2
$ 
on the relevant error integrals. It turns out that the large-coefficient error integral,
which is represented by
$
\displaystyle
\frac{1}{k} B^2
$
and which we have relegated to the terms $\cdots$ on RHS~\eqref{E:ENTROPYDIVERGENCETRANSPORTEQUATIONESTIMATE},
is much less degenerate than the one we have explicitly displayed 
on RHS~\eqref{E:ENTROPYDIVERGENCETRANSPORTEQUATIONESTIMATE}
and in particular does not contribute to the blowup-rate of the top-order energies.
A full discussion of this issue would involve a lengthy interlude in which we
describe the need to rely, in addition to energies along $\Sigma_t$, 
energies along the acoustic characteristics $\mathcal{P}_u$.
For this reason, we omit this aspect of the discussion.}
Substituting the estimate \eqref{E:STANDARDHODGE}
into \eqref{E:ENTROPYDIVERGENCETRANSPORTEQUATIONESTIMATE},
using equation \eqref{E:DIVGRADENT},
and referring to definition \eqref{E:MUSTAR},
we obtain
\begin{align} \label{E:SECONDENTROPYDIVERGENCETRANSPORTEQUATIONESTIMATE}
	\| \sqrt{\upmu} \DivofEntrenormalized \|_{L^2(\Sigma_t)}^2
	& \leq
		k
		\int_{s=0}^t
			\frac{1}{\upmu_{\star}(s)}
			\| \sqrt{\upmu} \Flatdiv \GradEnt \|_{L^2(\Sigma_s)}^2		
		\, ds
		+
		\cdots,
\end{align}
where $\cdots$ is as above.
From the $L^{\infty}$ bootstrap assumptions of Step(2),
we have
$\exp(2 \Densrenormalized) \lesssim 1$
Thus, from \eqref{E:RENORMALIZEDDIVOFENTROPY}, we have
$\| \sqrt{\upmu} \Flatdiv \GradEnt \|_{L^2(\Sigma_s)}^2
\lesssim
\| \sqrt{\upmu} \DivofEntrenormalized \|_{L^2(\Sigma_s)}^2
+
\cdots
$,
where $\cdots$ denotes terms that can be controlled 
without elliptic estimates, that is, via energy estimates
for the wave equations
\eqref{E:VELOCITYWAVEEQUATION}-\eqref{E:RENORMALIZEDDENSITYWAVEEQUATION}
and the transport equations
\eqref{E:RENORMALIZEDVORTICTITYTRANSPORTEQUATION}-\eqref{E:GRADENTROPYTRANSPORT}.
From these estimates and \eqref{E:SECONDENTROPYDIVERGENCETRANSPORTEQUATIONESTIMATE}, 
we deduce
\begin{align} \label{E:GRONWALLABLEENTROPYDIVERGENCETRANSPORTEQUATIONESTIMATE}
	\| \sqrt{\upmu} \DivofEntrenormalized \|_{L^2(\Sigma_t)}^2
	& \leq
		k
		\int_{s=0}^t
			\frac{1}{\upmu_{\star}(s)}
			\| \sqrt{\upmu} \DivofEntrenormalized \|_{L^2(\Sigma_s)}^2		
		\, ds
		+
		\cdots,
\end{align}
where we again emphasize that $k$ can be chosen to be a small constant.
Then from 
\eqref{E:UPMUSTARSHARP},
\eqref{E:GRONWALLABLEENTROPYDIVERGENCETRANSPORTEQUATIONESTIMATE},
Gronwall's inequality, 
and an appropriate $\mathcal{O}(\mathring{\upepsilon})$-size small-data assumption,\footnote{We again note that the 
smallness assumption guarantees, roughly, that the data are near that of a simple isentropic plane wave solution.}
we deduce that
\begin{align} \label{E:GRONWALLEDENTROPYDIVERGENCETRANSPORTEQUATIONESTIMATE}
	\| \sqrt{\upmu} \DivofEntrenormalized \|_{L^2(\Sigma_t)}^2
	& \lesssim
		\mathring{\upepsilon}^2
		\upmu_{\star}^{- \widetilde{k}}(t)
		+ 
		\cdots,
\end{align}
where $\widetilde{k}$ is a small constant whose smallness is controlled by $k$.
Again using \eqref{E:RENORMALIZEDDIVOFENTROPY}
and the bound $\exp(2 \Densrenormalized) \lesssim 1$,
we deduce from \eqref{E:GRONWALLEDENTROPYDIVERGENCETRANSPORTEQUATIONESTIMATE}
that
\begin{align} \label{E:GRONWALLEDENTROPYDIVERGE}
	\| \sqrt{\upmu} \Flatdiv \GradEnt \|_{L^2(\Sigma_t)}^2
	& \lesssim
		\mathring{\upepsilon}^2
		\upmu_{\star}^{- \widetilde{k}}(t)
		+ 
		\cdots.
\end{align}
One can obtain similar but much simpler estimates for
$\| \sqrt{\upmu} \Flatcurl \GradEnt \|_{L^2(\Sigma_t)}^2$
directly from equation \eqref{E:DIVGRADENT}.\footnote{The bound is completely trivial in the case
of $\Flatcurl \GradEnt$ since $\Flatcurl \GradEnt = 0$. The needed bound is less trivial
after one commutes equation \eqref{E:DIVGRADENT} with the elements of $\Fullset$ since
one must control the commutator terms.}
Then inserting these bounds into \eqref{E:STANDARDHODGE}, we finally obtain
the desired bound \eqref{E:FINALSTANDARDHODGE}. We have therefore given the main
ideas behind the elliptic estimates. This completes our overview of our forthcoming
proof of shock formation.

\end{enumerate}

\section{Proof of Theorem~\ref{T:GEOMETRICWAVETRANSPORTSYSTEM}}
\label{S:PROOFOFMAINTHEOREM}
\setcounter{equation}{0}
In this section, we prove Theorem~\ref{T:GEOMETRICWAVETRANSPORTSYSTEM}.
The theorem is a conglomeration of
Lemmas~\ref{L:RENORMALIZEDVORTICITYEVOLUTIONEQUATION},
\ref{L:TRANSPORTFORENTANDGRADENT},
\ref{L:DIVANDCURLEQUATIONSFORENTROPY},
\ref{L:DIVANDCURLEQUATIONSFORVORTICITY},
\ref{L:WAVEEQUATIONFORLOGDENSITY},
and
\ref{L:WAVEEQUATIONFORV},
in which we separately derive the equations stated in the theorem.
Actually, to obtain Theorem~\ref{T:GEOMETRICWAVETRANSPORTSYSTEM} 
from the lemmas, one must reorganize the terms in the equations; 
we omit these minor details.

Throughout Sect.~\ref{S:PROOFOFMAINTHEOREM}, 
we freely use the following identity (see \eqref{E:RESCALEDVARIABLES}):
\begin{align} \label{E:VORTICIITYRENORMALIZEDVORTICITYANDDENSITYRELATION}
	\omega^i 
	& =
	\exp(\Densrenormalized) \Vortrenormalized^i.
\end{align}

\subsection{Deriving the transport equations}
\label{SS:MAINTHEOREMTRANSPORTEQUATIONS}
We start by deriving the evolution equation 
\eqref{E:RENORMALIZEDVORTICTITYTRANSPORTEQUATION} for 
$\Vortrenormalized$.

\begin{lemma}[\textbf{Transport equation for} $\Vortrenormalized$]
\label{L:RENORMALIZEDVORTICITYEVOLUTIONEQUATION}
The compressible Euler equations 
\eqref{E:TRANSPORTDENSRENORMALIZEDRELATIVETORECTANGULAR}-\eqref{E:ENTROPYTRANSPORT}
imply the following evolution equation for the Cartesian components
$\lbrace \Vortrenormalized^i \rbrace_{i=1,2,3}$
of the specific vorticity vectorfield from Def.~\ref{D:RESCALEDVARIABLES}:
\begin{align} \label{E:RESTATEDRENORMALIZEDVORTICITYEVOLUTIONEQUATION}
	\Transport \Vortrenormalized^i
	& = 
		\Vortrenormalized^a \partial_a v^i
		-
		\exp(-2 \Densrenormalized) \Speed^{-2} \frac{p_{;\Ent}}{\bar{\varrho}} \epsilon_{iab} (\Transport v^a) \GradEnt^b.
\end{align}
\end{lemma}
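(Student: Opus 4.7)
\medskip

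\noindent \textbf{Proof sketch.} The plan is to obtain an evolution equation for the Cartesian vorticity $\omega^i = \epsilon_{iab}\partial_a v^b$ by taking the Euclidean curl of the velocity equation \eqref{E:TRANSPORTVELOCITYRELATIVETORECTANGULAR}, then convert to $\Vortrenormalized^i = \exp(-\Densrenormalized)\omega^i$ using the continuity equation, and finally re-express the residual baroclinic term using the velocity equation again. Throughout, one exploits the antisymmetry of $\epsilon_{iab}$ relentlessly to kill Hessian-type contributions.

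\medskip

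\noindent \textit{Step 1 (Curl of the transport part).} Applying $\epsilon_{iab}\partial_a$ to \eqref{E:TRANSPORTVELOCITYRELATIVETORECTANGULAR} and using the commutator $[\partial_a,\Transport] = (\partial_a v^c)\partial_c$, one obtains
\[
\Transport \omega^i + \epsilon_{iab}(\partial_a v^c)(\partial_c v^b)
= \epsilon_{iab}\partial_a\!\left[-\Speed^2\partial_b\Densrenormalized -\exp(-\Densrenormalized)\tfrac{p_{;\Ent}}{\bar{\varrho}}\partial_b\Ent\right].
\]
The standard vector-calculus identity $-\epsilon_{iab}(\partial_a v^c)(\partial_c v^b) = \omega^c\partial_c v^i - \omega^i\partial_c v^c$ (checked by expanding and using $\epsilon$-identities), combined with $\Transport\Densrenormalized = -\partial_c v^c$ from \eqref{E:TRANSPORTDENSRENORMALIZEDRELATIVETORECTANGULAR}, converts the quadratic term into $\omega^c\partial_c v^i + \omega^i\Transport\Densrenormalized$.

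\medskip

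\noindent \textit{Step 2 (Baroclinic simplification).} In the curl of the pressure terms, the $\partial_a\partial_b$ contributions vanish by symmetry, and all terms of the form $(\partial_a X)(\partial_b X)$ (with $X \in \{\Densrenormalized,\Ent\}$) vanish by antisymmetry of $\epsilon_{iab}$. Only the cross term $\epsilon_{iab}(\partial_a\Densrenormalized)(\partial_b\Ent)$ survives, with a coefficient built from $\Speed^2_{;\Ent}$, $p_{;\Ent}$, and $p_{;\Ent;\Densrenormalized}$. The key calculational point is that differentiating the identity $\Speed^2 = \bar{\varrho}^{-1}\exp(-\Densrenormalized)\,p_{;\Densrenormalized}$ with respect to $\Ent$ yields $\Speed^2_{;\Ent} = \bar{\varrho}^{-1}\exp(-\Densrenormalized)\,p_{;\Densrenormalized;\Ent}$, which exactly cancels the $p_{;\Ent;\Densrenormalized}$ contribution coming from the entropy flux and collapses the coefficient to the clean factor $\exp(-\Densrenormalized)p_{;\Ent}/\bar{\varrho}$. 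This cancellation (relying on equality of mixed partials of $p$) is the main calculational obstacle and the reason the final source is so compact.

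\medskip

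\noindent \textit{Step 3 (From $\omega$ to $\Vortrenormalized$, and the closing substitution).} Combining the two steps gives
\[
\Transport\omega^i = \omega^c\partial_c v^i + \omega^i\,\Transport\Densrenormalized + \exp(-\Densrenormalized)\tfrac{p_{;\Ent}}{\bar{\varrho}}\,\epsilon_{iab}(\partial_a\Densrenormalized)\GradEnt^b.
\]
Since $\Transport\Vortrenormalized^i = \exp(-\Densrenormalized)[\Transport\omega^i - \omega^i\Transport\Densrenormalized]$, the $\omega^i\,\Transport\Densrenormalized$ terms cancel, leaving $\Vortrenormalized^c\partial_c v^i + \exp(-2\Densrenormalized)\tfrac{p_{;\Ent}}{\bar{\varrho}}\epsilon_{iab}(\partial_a\Densrenormalized)\GradEnt^b$. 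To finish, I would solve \eqref{E:TRANSPORTVELOCITYRELATIVETORECTANGULAR} algebraically for $\partial_a\Densrenormalized = -\Speed^{-2}\Transport v^a - \Speed^{-2}\exp(-\Densrenormalized)\tfrac{p_{;\Ent}}{\bar{\varrho}}\GradEnt^a$ and contract against $\epsilon_{iab}\GradEnt^b$; the second summand dies by antisymmetry (it contracts $\GradEnt^a\GradEnt^b$ with $\epsilon_{iab}$), leaving exactly the stated source $-\exp(-2\Densrenormalized)\Speed^{-2}\tfrac{p_{;\Ent}}{\bar{\varrho}}\epsilon_{iab}(\Transport v^a)\GradEnt^b$. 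Every remaining step is bookkeeping.
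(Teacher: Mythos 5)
Your proof is correct and reaches the same key intermediate identity
\[
\Transport \Vortrenormalized^i
=
\Vortrenormalized^a \partial_a v^i
+
\exp(-2\Densrenormalized)\frac{p_{;\Ent}}{\bar{\varrho}}\,\epsilon_{iab}(\partial_a\Densrenormalized)\GradEnt^b,
\]
followed by the same closing substitution of $\partial_a\Densrenormalized$ via \eqref{E:TRANSPORTVELOCITYRELATIVETORECTANGULAR} and the antisymmetry kill of $\epsilon_{iab}\GradEnt^a\GradEnt^b$. The one place you diverge from the paper is in how the compact baroclinic coefficient $\exp(-\Densrenormalized)p_{;\Ent}/\bar{\varrho}$ is produced. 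You compute the curl of $-\Speed^2\partial_b\Densrenormalized - \exp(-\Densrenormalized)\tfrac{p_{;\Ent}}{\bar{\varrho}}\partial_b\Ent$ directly in the $(\Densrenormalized,\Ent)$ variables and find that the $\Speed^2_{;\Ent}$ and $p_{;\Ent;\Densrenormalized}$ pieces annihilate each other via the chain-rule identity $\Speed^2_{;\Ent}=\bar{\varrho}^{-1}\exp(-\Densrenormalized)p_{;\Densrenormalized;\Ent}$ together with equality of mixed partials of $p$. The paper instead avoids this cancellation entirely by first rewriting the full acceleration as $-\bar{\varrho}^{-1}\exp(-\Densrenormalized)\partial_i p$, so that the curl source collapses in one step to $\bar{\varrho}^{-1}\exp(-2\Densrenormalized)\epsilon_{iab}(\partial_a\Densrenormalized)\partial_b p$ with the $p_{;\Densrenormalized}\partial_b\Densrenormalized$ piece dying immediately by antisymmetry. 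Both are valid; the paper's route is shorter because the gradient structure of $\nabla p$ makes the ``miraculous'' cancellation structural rather than computational, whereas your version makes explicit the role of the mixed-partial identity $p_{;\Densrenormalized;\Ent}=p_{;\Ent;\Densrenormalized}$, which is arguably more transparent about the precise input from the equation of state. A second cosmetic difference: you derive the transport equation for $\omega^i$ first and then multiply by $\exp(-\Densrenormalized)$ to pass to $\Vortrenormalized^i$, while the paper applies $\exp(-\Densrenormalized)\Flatcurl$ to the velocity equation in one stroke; this is purely a matter of bookkeeping and the $\omega^i\Transport\Densrenormalized$ cancellation you note is identical.
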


\begin{proof}
We first note the following chain rule identity,
which follows easily from 
definitions \eqref{E:RESCALEDVARIABLES}
and \eqref{E:SOUNDSPEED}:
$
\displaystyle
\exp(-\Densrenormalized) \partial_i p
=
\Speed^2 \delta^{ia} \partial_a \Densrenormalized
+ 
\exp(-\Densrenormalized) \frac{p_{;\Ent}}{\bar{\varrho}} \delta^{ia} \partial_a \Ent
$.
It follows that
$
\displaystyle
\left(\mbox{{\upshape RHS}~\eqref{E:TRANSPORTVELOCITYRELATIVETORECTANGULAR}}\right)^i
= - 
\frac{1}{\bar{\varrho}}
\exp(-\Densrenormalized) \partial_i p
$.
Hence, applying $\exp(-\Densrenormalized) \Flatcurl$ to \eqref{E:TRANSPORTVELOCITYRELATIVETORECTANGULAR}
and using 
definition \eqref{E:ENTROPYGRADIENT},
the antisymmetry of $\epsilon_{\dots}$,
and the symmetry $\partial_a \partial_b p = \partial_b \partial_a p$,
we deduce the following identity:
\begin{align} \label{E:CURLSOURCETERMID}
\exp(-\Densrenormalized) 
\left(
	\Flatcurl \mbox{{\upshape RHS}\eqref{E:TRANSPORTVELOCITYRELATIVETORECTANGULAR}} 
\right)^i
& = 
\exp(-2 \Densrenormalized)
\frac{p_{;\Ent}}{\bar{\varrho}}
\epsilon_{iab}
(\partial_a \Densrenormalized)
 \partial_b \Ent
 	\\
& = \exp(-2 \Densrenormalized)
\frac{p_{;\Ent}}{\bar{\varrho}}
\epsilon_{iab}
(\partial_a \Densrenormalized)
\GradEnt^b.
 \notag
\end{align}
Next, in view of the definition \eqref{E:RESCALEDVARIABLES} of $\Vortrenormalized$,
we commute equation \eqref{E:TRANSPORTVELOCITYRELATIVETORECTANGULAR}
with the operator 
$
\displaystyle
\exp(-\Densrenormalized)\Flatcurl
$
and use equations  
\eqref{E:TRANSPORTDENSRENORMALIZEDRELATIVETORECTANGULAR} and \eqref{E:MATERIALVECTORVIELDRELATIVETORECTANGULAR},
the antisymmetry of $\epsilon_{\dots}$,
the identity 
$\epsilon_{iab} \epsilon_{deb}
=
\delta_{id} \delta_{ae} - \delta_{ie} \delta_{ad}
$,
and the identity \eqref{E:CURLSOURCETERMID} to deduce
\begin{align} \label{E:FIRSTSTEPRENORMALIZEDVORTICITYEVOLUTIONEQUATION}
	\Transport \Vortrenormalized^i
	& = -
		 	\exp(-\Densrenormalized) \epsilon_{iab} (\partial_a v^d) \partial_d v^b
		  - 
		  \exp(-\Densrenormalized) (\Transport \Densrenormalized) \omega^i
		  + 
		  \exp(-2 \Densrenormalized) \frac{p_{;\Ent}}{\bar{\varrho}} \epsilon_{iab} (\partial_a \Densrenormalized) \GradEnt^b
		\\
	& = 
		- 
		\exp(-\Densrenormalized) \epsilon_{iab} (\partial_a v^d) \partial_d v^b
		+ 
		(\partial_a v^a) \Vortrenormalized^i
		+ 
		\exp(-2 \Densrenormalized) \frac{p_{;\Ent}}{\bar{\varrho}} \epsilon_{iab} (\partial_a \Densrenormalized) \GradEnt^b
		\notag \\
	& = 
		- 
		\exp(-\Densrenormalized) \epsilon_{iab} (\partial_a v^d) (\partial_d v^b - \partial_b v^d)
		+ 
		(\partial_a v^a) \Vortrenormalized^i
		+ 
		\exp(-2 \Densrenormalized) \frac{p_{;\Ent}}{\bar{\varrho}} \epsilon_{iab} (\partial_a \Densrenormalized) \GradEnt^b
		\notag \\
	& = - 
		\epsilon_{iab} \epsilon_{dbe} \Vortrenormalized^e (\partial_a v^d) 
		+ 
		(\partial_a v^a) \Vortrenormalized^i
		+ 
		\exp(-2 \Densrenormalized) \frac{p_{;\Ent}}{\bar{\varrho}} \epsilon_{iab} (\partial_a \Densrenormalized) \GradEnt^b
	\notag \\
	& = 
	\epsilon_{iab} \epsilon_{deb} \Vortrenormalized^e (\partial_a v^d) 
	+ 
	(\partial_a v^a) \Vortrenormalized^i
	+ 
	\exp(-2 \Densrenormalized) \frac{p_{;\Ent}}{\bar{\varrho}} \epsilon_{iab} (\partial_a \Densrenormalized) \GradEnt^b
	\notag \\
	& = (\delta_{id} \delta_{ae} - \delta_{ie} \delta_{ad}) \Vortrenormalized^e (\partial_a v^d) 
	+ 
	(\partial_a v^a) \Vortrenormalized^i
	+ 
	\exp(-2 \Densrenormalized) \frac{p_{;\Ent}}{\bar{\varrho}} \epsilon_{iab} (\partial_a \Densrenormalized) \GradEnt^b
	\notag 	
		\\
	& =
	\Vortrenormalized^a (\partial_a v^i) 
	+ 
	\exp(-2 \Densrenormalized) \frac{p_{;\Ent}}{\bar{\varrho}} \epsilon_{iab} (\partial_a \Densrenormalized) \GradEnt^b.
	\notag
\end{align}
Next, we use equation \eqref{E:TRANSPORTVELOCITYRELATIVETORECTANGULAR},
the definition $\GradEnt^i = \partial_i \Ent$,
and the antisymmetry of $\epsilon_{\dots}$ 
to derive the identity
$\epsilon_{iab} (\partial_a \Densrenormalized) \GradEnt^b 
=
- \Speed^{-2} \epsilon_{iab} (\Transport v^a) \GradEnt^b
$.
Substituting this identity into the last product on RHS~~\eqref{E:FIRSTSTEPRENORMALIZEDVORTICITYEVOLUTIONEQUATION},
we arrive at \eqref{E:RESTATEDRENORMALIZEDVORTICITYEVOLUTIONEQUATION}.
\end{proof}

We now establish equations 
\eqref{E:ENTROPYTRANSPORTMAINSYSTEM}-\eqref{E:GRADENTROPYTRANSPORT}.

\begin{lemma}[\textbf{Transport equations for} $\Ent$ \textbf{and} $\GradEnt^i$]
\label{L:TRANSPORTFORENTANDGRADENT}
The compressible Euler equations 
\eqref{E:TRANSPORTDENSRENORMALIZEDRELATIVETORECTANGULAR}-\eqref{E:ENTROPYTRANSPORT}
imply the following transport equations for $\Ent$ and
the Cartesian components
$\lbrace \GradEnt^i \rbrace_{i=1,2,3}$
of the entropy gradient vectorfield from Def.~\ref{D:ENTROPYGRADIENT}:
\begin{subequations}
\begin{align} \label{E:PROOFENTROPYTRANSPORTMAINSYSTEM}
\Transport \Ent & = 0,
	\\
\Transport \GradEnt^i
& = - \GradEnt^a \partial_a v^i
		+ 
		\exp(\Densrenormalized) \epsilon_{iab} \Vortrenormalized^a \GradEnt^b.
\label{E:PROOFGRADENTROPYTRANSPORT}
\end{align}
\end{subequations}
\end{lemma}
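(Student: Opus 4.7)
The first equation, $\Transport \Ent = 0$, is merely a restatement of equation \eqref{E:ENTROPYTRANSPORT}, so no work is needed there. The substance of the lemma is the identity for $\Transport \GradEnt^i$, and the plan is to obtain it by commuting a Cartesian spatial derivative through the entropy transport equation and then rewriting the resulting source term using the curl.

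Concretely, I would start by applying $\partial_i$ to \eqref{E:ENTROPYTRANSPORT}. Since $\partial_i$ commutes with $\partial_t$ and $\partial_a$, and since $\GradEnt^i = \partial_i \Ent$ by \eqref{E:ENTROPYGRADIENT}, this yields the commutator identity
\begin{align*}
\Transport \GradEnt^i
& = [\Transport,\partial_i] \Ent + \partial_i \Transport \Ent
= - (\partial_i v^a) \partial_a \Ent
= - (\partial_i v^a) \GradEnt^a.
\end{align*}
To match the stated right-hand side, I would then add and subtract $\GradEnt^a \partial_a v^i$ to isolate the antisymmetric combination $\partial_i v^a - \partial_a v^i$, writing
\begin{align*}
\Transport \GradEnt^i
& = - \GradEnt^a \partial_a v^i
    - \GradEnt^a (\partial_i v^a - \partial_a v^i).
\end{align*}

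The remaining step is the standard vector-calculus identity
$\partial_i v^a - \partial_a v^i = \epsilon_{iab}\, \omega^b$,
which follows from $\omega^b = \epsilon_{bcd} \partial_c v^d$ together with the contracted $\epsilon$-$\epsilon$ identity $\epsilon_{iab}\epsilon_{bcd} = \delta_{ic}\delta_{ad} - \delta_{id}\delta_{ac}$. Substituting this and relabeling the dummy indices $a \leftrightarrow b$ (using the antisymmetry of $\epsilon$) gives
\begin{align*}
- \GradEnt^a (\partial_i v^a - \partial_a v^i)
= - \epsilon_{iab}\, \omega^b \GradEnt^a
= \epsilon_{iab}\, \omega^a \GradEnt^b.
\end{align*}
Finally, I would invoke the relation \eqref{E:VORTICIITYRENORMALIZEDVORTICITYANDDENSITYRELATION}, i.e., $\omega^a = \exp(\Densrenormalized)\Vortrenormalized^a$, to convert the vorticity to the specific vorticity and arrive at \eqref{E:PROOFGRADENTROPYTRANSPORT}.

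There is no substantive obstacle: the proof is a short commutator computation followed by a standard curl identity. The only place to be careful is keeping track of the antisymmetries of $\epsilon_{iab}$ when relabeling dummy indices, so as to recover exactly the index pattern $\epsilon_{iab}\Vortrenormalized^a \GradEnt^b$ asserted on the right-hand side of \eqref{E:PROOFGRADENTROPYTRANSPORT}.
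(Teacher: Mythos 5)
Your proposal is correct and follows essentially the same route as the paper: apply $\partial_i$ to \eqref{E:ENTROPYTRANSPORT}, pick up the commutator term $-(\partial_i v^a)\GradEnt^a$, add and subtract $\GradEnt^a \partial_a v^i$ to isolate the antisymmetrized gradient, identify that with the curl via the $\epsilon$--$\epsilon$ contraction, and finish with the relation $\omega = \exp(\Densrenormalized)\Vortrenormalized$. The only differences from the paper's write-up are cosmetic choices of sign and index labeling in the intermediate $\epsilon$-manipulations.
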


\begin{proof}
	Equation \eqref{E:PROOFENTROPYTRANSPORTMAINSYSTEM} is just a restatement of \eqref{E:ENTROPYTRANSPORT}.
	
	To derive \eqref{E:PROOFGRADENTROPYTRANSPORT}, we first commute equation
	\eqref{E:PROOFENTROPYTRANSPORTMAINSYSTEM} with $\partial_i$ and use  
	definition \eqref{E:MATERIALVECTORVIELDRELATIVETORECTANGULAR},
	the definition
	$\GradEnt^i = \partial_i \Ent$,
	and the identity \eqref{E:VORTICIITYRENORMALIZEDVORTICITYANDDENSITYRELATION}
	to obtain
	\begin{align}
		\Transport \GradEnt^i
		& = - \delta_{ab}(\partial_i v^a) \GradEnt^b
				\label{E:ENTROPYTRANSPORTONCECARTESIANCOMMMUTED}
					\\
		& = - \delta_{ab}(\partial_a v^i) \GradEnt^b
				+ \delta_{ab}(\partial_a v^i - \partial_i v^a) \GradEnt^b
				\notag \\
		& = - \GradEnt^a \partial_a v^i
			+ \epsilon_{aij} \omega^j \GradEnt^a
				\notag \\
		& = - \GradEnt^a \partial_a v^i
				+ \exp(\Densrenormalized) \epsilon_{aij} \Vortrenormalized^j \GradEnt^a.
			\notag
	\end{align}
	Equation \eqref{E:PROOFGRADENTROPYTRANSPORT} now follows from 
	\eqref{E:ENTROPYTRANSPORTONCECARTESIANCOMMMUTED}
	and the antisymmetry of $\epsilon_{\dots}$.
\end{proof}

We now establish equations \eqref{E:TRANSPORTFLATDIVGRADENT}-\eqref{E:DIVGRADENT}.

\begin{lemma}[\textbf{Equations for} $\DivofEntrenormalized$ \textbf{and} $\Flatcurl \GradEnt$]
\label{L:DIVANDCURLEQUATIONSFORENTROPY}
The compressible Euler equations 
\eqref{E:TRANSPORTDENSRENORMALIZEDRELATIVETORECTANGULAR}-\eqref{E:ENTROPYTRANSPORT}
imply the following equations
for the modified fluid variable $\DivofEntrenormalized$ defined in \eqref{E:RENORMALIZEDDIVOFENTROPY}
and for the Cartesian components $(\Flatcurl \GradEnt)^i$, 
$(i=1,2,3)$,
where $\GradEnt$ is the entropy gradient vectorfield defined 
in \eqref{E:ENTROPYGRADIENT}:
\begin{subequations}
\begin{align}
	\Transport \DivofEntrenormalized
	& =
		2 \exp(-2 \Densrenormalized)
		\left\lbrace
			(\partial_a v^a) \partial_b \GradEnt^b
			- 
			(\partial_a v^b) \partial_a \GradEnt^b
		\right\rbrace
			\label{E:PROOFTRANSPORTFLATDIVGRADENT} \\
	& \ \
		+ 
		2 \exp(-2 \Densrenormalized) 
		\left\lbrace
			(\GradEnt^a \partial_a v^b) \partial_b \Densrenormalized
			-
			(\partial_a v^a) \GradEnt^b \partial_b \Densrenormalized
		\right\rbrace
		\notag \\
	& \ \
		+ 
		\exp(-\Densrenormalized) \delta_{ab} (\Flatcurl \Vortrenormalized)^a \GradEnt^b,
		\notag \\
	(\Flatcurl \GradEnt)^i
	& = 0.
	\label{E:PROOFDIVGRADENT}
\end{align}
\end{subequations}

\end{lemma}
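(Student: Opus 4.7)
The curl identity $(\Flatcurl \GradEnt)^i=0$ is immediate: since $\GradEnt^i=\partial_i\Ent$ by Def.~\ref{D:ENTROPYGRADIENT}, we have $(\Flatcurl\GradEnt)^i=\epsilon_{iab}\partial_a\partial_b\Ent=0$ by the symmetry of mixed partials. The work is all in the evolution equation for $\DivofEntrenormalized$.

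My plan is to apply $\Transport$ directly to the definition \eqref{E:RENORMALIZEDDIVOFENTROPY}, $\DivofEntrenormalized=\exp(-2\Densrenormalized)\Flatdiv\GradEnt-\exp(-2\Densrenormalized)\GradEnt^a\partial_a\Densrenormalized$, and then reduce every resulting term to something manifestly on the right-hand side of \eqref{E:PROOFTRANSPORTFLATDIVGRADENT}. The two fundamental ingredients will be: (i) the identity $\Transport\exp(-2\Densrenormalized)=-2\exp(-2\Densrenormalized)\Transport\Densrenormalized=2\exp(-2\Densrenormalized)\partial_a v^a$, obtained from \eqref{E:TRANSPORTDENSRENORMALIZEDRELATIVETORECTANGULAR}; and (ii) the commutator $[\Transport,\partial_a]f=-(\partial_a v^b)\partial_b f$, which is just the Cartesian expansion of $\Transport=\partial_t+v^a\partial_a$. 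The transport equation \eqref{E:PROOFGRADENTROPYTRANSPORT} for $\GradEnt^i$ from Lemma~\ref{L:TRANSPORTFORENTANDGRADENT} will be substituted wherever $\Transport\GradEnt^i$ appears.

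Concretely, I would expand $\Transport(\exp(-2\Densrenormalized)\partial_a\GradEnt^a)$ as a sum of three pieces: the derivative of the weight, which yields $2\exp(-2\Densrenormalized)(\partial_a v^a)\partial_b\GradEnt^b$; the commutator piece $\exp(-2\Densrenormalized)[\Transport,\partial_a]\GradEnt^a=-\exp(-2\Densrenormalized)(\partial_a v^b)\partial_b\GradEnt^a$; and the piece $\exp(-2\Densrenormalized)\partial_a\Transport\GradEnt^a$. Into the last piece I substitute \eqref{E:PROOFGRADENTROPYTRANSPORT} and differentiate: the term $-\partial_a(\GradEnt^b\partial_b v^a)$ produces $-(\partial_a\GradEnt^b)\partial_b v^a$, which combines with the commutator piece above to yield the expected $-2\exp(-2\Densrenormalized)(\partial_a v^b)\partial_a\GradEnt^b$ (after relabeling), plus $-\GradEnt^b\partial_a\partial_b v^a$, which I keep for later cancellation. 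Differentiating the vorticity contribution $\exp(\Densrenormalized)\epsilon_{abc}\Vortrenormalized^b\GradEnt^c$ and using $\partial_a(\epsilon_{abc}\Vortrenormalized^b)=\delta_{bc}(\Flatcurl\Vortrenormalized)^{\cdot}$-style manipulations together with $\Flatcurl\GradEnt=0$ produces exactly $\exp(-\Densrenormalized)\delta_{ab}(\Flatcurl\Vortrenormalized)^a\GradEnt^b$ plus a term proportional to $\epsilon_{abc}\Vortrenormalized^b\GradEnt^c\partial_a\Densrenormalized$ that I will have to cancel against contributions from the second half of the definition.

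Turning to $-\Transport(\exp(-2\Densrenormalized)\GradEnt^a\partial_a\Densrenormalized)$, I expand by the Leibniz rule. The weight's derivative gives $-2\exp(-2\Densrenormalized)(\partial_b v^b)\GradEnt^a\partial_a\Densrenormalized$, which is one of the advertised terms on the second line of \eqref{E:PROOFTRANSPORTFLATDIVGRADENT}. For $\Transport\GradEnt^a$ I again use \eqref{E:PROOFGRADENTROPYTRANSPORT}, generating $\exp(-2\Densrenormalized)(\GradEnt^b\partial_b v^a)\partial_a\Densrenormalized$ (the other advertised term) and the vorticity piece $-\exp(-\Densrenormalized)\epsilon_{abc}\Vortrenormalized^b\GradEnt^c\partial_a\Densrenormalized$, which cancels the leftover from the previous paragraph. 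Finally I must handle $-\exp(-2\Densrenormalized)\GradEnt^a\Transport\partial_a\Densrenormalized=-\exp(-2\Densrenormalized)\GradEnt^a\partial_a\Transport\Densrenormalized+\exp(-2\Densrenormalized)\GradEnt^a(\partial_a v^b)\partial_b\Densrenormalized$; substituting $\Transport\Densrenormalized=-\partial_b v^b$ from \eqref{E:TRANSPORTDENSRENORMALIZEDRELATIVETORECTANGULAR} yields $\exp(-2\Densrenormalized)\GradEnt^a\partial_a\partial_b v^b$, which cancels exactly the leftover $-\GradEnt^b\partial_a\partial_b v^a$ term from above. Collecting everything gives \eqref{E:PROOFTRANSPORTFLATDIVGRADENT}.

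The only nontrivial step is the bookkeeping that produces the $(\Flatcurl\Vortrenormalized)^a$ term: when differentiating $\exp(\Densrenormalized)\epsilon_{abc}\Vortrenormalized^b\GradEnt^c$ with respect to $\partial_a$, one must use $\Flatcurl\GradEnt=0$ to re-express $\epsilon_{abc}\partial_a\GradEnt^c$ in the correct way and then recognize the antisymmetric combination in the derivatives of $\Vortrenormalized^b$ as $(\Flatcurl\Vortrenormalized)^\cdot$. This is the main (though still routine) obstacle; all other steps are direct applications of the Leibniz rule and of the already-derived transport equations.
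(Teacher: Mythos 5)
Your proof is correct and follows essentially the same route as the paper's: the same commutator identity $[\Transport,\partial_a]=-(\partial_a v^b)\partial_b$, the same substitutions of \eqref{E:TRANSPORTDENSRENORMALIZEDRELATIVETORECTANGULAR} and \eqref{E:PROOFGRADENTROPYTRANSPORT}, the same use of $\Flatcurl\GradEnt=0$, and the same cancellations. The only difference is the order of operations — you apply $\Transport$ to the full $\exp(-2\Densrenormalized)$-weighted definition of $\DivofEntrenormalized$ at once and carry the weight through the Leibniz expansion, whereas the paper first computes $\Transport\Flatdiv\GradEnt$ and only commutes the exponential weight through $\Transport$ at the end — a purely organizational distinction.
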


\begin{proof}
Equation \eqref{E:PROOFDIVGRADENT} is a simple consequence of the fact that
$\GradEnt$ is a (spatial) gradient vectorfield.

To derive \eqref{E:PROOFTRANSPORTFLATDIVGRADENT},
we first commute the already established equation \eqref{E:PROOFGRADENTROPYTRANSPORT}
with $\partial_i$ 
and use 
definition \eqref{E:MATERIALVECTORVIELDRELATIVETORECTANGULAR} and
equation \eqref{E:VORTICIITYRENORMALIZEDVORTICITYANDDENSITYRELATION}
to deduce
\begin{align}  \label{E:GRADENTROPYTRANSPORTFLATDIVTAKEN}
\Transport \Flatdiv \GradEnt
	& = 
		- 2 (\partial_a v^b) \partial_b \GradEnt^a
		- \GradEnt^a \partial_a \Flatdiv v
		+ \exp(\Densrenormalized) \epsilon_{iab} (\partial_i \Densrenormalized) \Vortrenormalized^a \GradEnt^b
		+ \exp(\Densrenormalized) \epsilon_{iab} (\partial_i \Vortrenormalized^a) \GradEnt^b
			\\
	& \ \
		+ \exp(\Densrenormalized)\epsilon_{iab} \Vortrenormalized^a \partial_i \GradEnt^b.
		\notag
\end{align}
From equation \eqref{E:PROOFDIVGRADENT},
we see that the last product on RHS~\eqref{E:GRADENTROPYTRANSPORTFLATDIVTAKEN} vanishes.
Also noting that $\epsilon_{iab} (\partial_i \Vortrenormalized^a) = (\Flatcurl \Vortrenormalized)^b$,
we deduce from \eqref{E:GRADENTROPYTRANSPORTFLATDIVTAKEN} that
\begin{align} \label{E:ANOTHERGRADENTROPYTRANSPORTFLATDIVTAKEN}
	\Transport \Flatdiv \GradEnt
	& = 
		- 
		\GradEnt^a \partial_a \Flatdiv v
		- 
		2 (\partial_a v^b) \partial_a \GradEnt^b
		+ 
		\exp(\Densrenormalized) \epsilon_{iab} (\partial_i \Densrenormalized) \Vortrenormalized^a \GradEnt^b
		+ 
		\exp(\Densrenormalized) \delta_{ab} (\Flatcurl \Vortrenormalized)^a \GradEnt^b.
\end{align}
Next, using equation \eqref{E:TRANSPORTDENSRENORMALIZEDRELATIVETORECTANGULAR} to substitute
for the term $\Flatdiv v$ on RHS~\eqref{E:ANOTHERGRADENTROPYTRANSPORTFLATDIVTAKEN}
and using equation \eqref{E:MATERIALVECTORVIELDRELATIVETORECTANGULAR},
we find that
\begin{align} \label{E:THIRDANOTHERGRADENTROPYTRANSPORTFLATDIVTAKEN}
\Transport \Flatdiv \GradEnt		
& = \GradEnt^a \partial_a (\Transport \Densrenormalized)
			- 2 (\partial_a v^b) \partial_b \GradEnt^a
			+ \exp(\Densrenormalized) \epsilon_{iab} (\partial_i \Densrenormalized) \Vortrenormalized^a \GradEnt^b
			+ \exp(\Densrenormalized) \delta_{ab} (\Flatcurl \Vortrenormalized)^a \GradEnt^b
				\\
	& =  \Transport (\GradEnt^a \partial_a \Densrenormalized)
		+ (\GradEnt^a \partial_a v^b) \partial_b \Densrenormalized
		- (\Transport \GradEnt^a) \partial_a \Densrenormalized
		- 2 (\partial_a v^b) \partial_b \GradEnt^a
		+ \exp(\Densrenormalized) \epsilon_{iab} (\partial_i \Densrenormalized) \Vortrenormalized^a \GradEnt^b
			\notag \\
	& \ \ 
		+ \exp(\Densrenormalized) \delta_{ab} (\Flatcurl \Vortrenormalized)^a \GradEnt^b.
		\notag
\end{align}
Using equation \eqref{E:PROOFGRADENTROPYTRANSPORT} to substitute 
for the factor $\Transport \GradEnt^a$ on 
the second line of RHS~\eqref{E:THIRDANOTHERGRADENTROPYTRANSPORTFLATDIVTAKEN}, 
we deduce
\begin{align} \label{E:FOURTHANOTHERGRADENTROPYTRANSPORTFLATDIVTAKEN}
	\Transport \Flatdiv \GradEnt 
	& =  
		\Transport (\GradEnt^a \partial_a \Densrenormalized)
		- 2 (\partial_a v^b) \partial_b \GradEnt^a
		+ 2 (\GradEnt^a \partial_a v^b) \partial_b \Densrenormalized
		+ \exp(\Densrenormalized) \delta_{ab} (\Flatcurl \Vortrenormalized)^a \GradEnt^b.
\end{align}
Bringing the term $\Transport (\GradEnt^a \partial_a \Densrenormalized)$
on RHS~\eqref{E:FOURTHANOTHERGRADENTROPYTRANSPORTFLATDIVTAKEN}
over to the left and then commuting the equation with $\exp(-2\Densrenormalized)$, 
we obtain
\begin{align} \label{E:ALMOSTFINALGRADENTROPYTRANSPORTFLATDIVTAKEN}
	\Transport 
	\left\lbrace
		\exp(-2 \Densrenormalized) \Flatdiv \GradEnt 
		-
		\exp(-2 \Densrenormalized) \GradEnt^a \partial_a \Densrenormalized
	\right\rbrace
	& =  
		- 
		2 \exp(-2 \Densrenormalized) (\partial_a v^b) \partial_b \GradEnt^a
		- 
		2 \exp(-2 \Densrenormalized) (\Transport \Densrenormalized) \Flatdiv \GradEnt
			\\
	& \ \
		+ 
		2 \exp(-2 \Densrenormalized) (\GradEnt^a \partial_a v^b) \partial_b \Densrenormalized
		+ 
		2 \exp(-2 \Densrenormalized) (\Transport \Densrenormalized) \GradEnt^a \partial_a \Densrenormalized
		\notag \\
	& \ \
		+ 
		\exp(-\Densrenormalized) \delta_{ab} (\Flatcurl \Vortrenormalized)^a \GradEnt^b.
		\notag
\end{align}
Finally, using equation \eqref{E:TRANSPORTDENSRENORMALIZEDRELATIVETORECTANGULAR}
to substitute for the two factors of $\Transport \Densrenormalized$
on RHS~\eqref{E:ALMOSTFINALGRADENTROPYTRANSPORTFLATDIVTAKEN}
and referring to definition \eqref{E:RENORMALIZEDDIVOFENTROPY},
we arrive at the desired equation \eqref{E:PROOFTRANSPORTFLATDIVGRADENT}.
\end{proof}

We now establish equations 
	\eqref{E:FLATDIVOFRENORMALIZEDVORTICITY}-\eqref{E:EVOLUTIONEQUATIONFLATCURLRENORMALIZEDVORTICITY}.
	
\begin{lemma}[\textbf{Equations for} $\Flatdiv \Vortrenormalized$ \textbf{and}
	$\CurlofVortrenormalized^i$]
\label{L:DIVANDCURLEQUATIONSFORVORTICITY}
Let $\lbrace \CurlofVortrenormalized^i \rbrace$ be the
Cartesian components of the modified fluid variable from
from Def.~\ref{D:RENORMALIZEDCURLOFSPECIFICVORTICITY}.
The compressible Euler equations 
\eqref{E:TRANSPORTDENSRENORMALIZEDRELATIVETORECTANGULAR}-\eqref{E:ENTROPYTRANSPORT}
imply the following equation for
$\Flatdiv \Vortrenormalized$
and transport equation for the Cartesian components
$\lbrace \Vortrenormalized^i \rbrace_{i=1,2,3}$,
where $\Vortrenormalized$ is the specific vorticity vectorfield from Def.~\ref{D:RESCALEDVARIABLES}:
\begin{subequations}
	\begin{align} \label{E:PROOFFLATDIVOFRENORMALIZEDVORTICITY}
	\Flatdiv \Vortrenormalized
	& = - \Vortrenormalized^a \partial_a \Densrenormalized,
	\end{align}
	
\begin{align} \label{E:PROOFEVOLUTIONEQUATIONFLATCURLRENORMALIZEDVORTICITY}
	\Transport \CurlofVortrenormalized^i
	& 	
		=
		- 
		2 \delta_{jk} \epsilon_{iab} \exp(-\Densrenormalized) (\partial_a v^j) \partial_b \Vortrenormalized^k
		+
		\epsilon_{ajk}
		\exp(-\Densrenormalized)
		(\partial_a v^i) 
		\partial_j \Vortrenormalized^k
			\\
&  \ \
		+
		\exp(-3 \Densrenormalized) \Speed^{-2} \frac{p_{;\Ent}}{\bar{\varrho}} 
		\left\lbrace 
			(\Transport \GradEnt^a) \partial_a v^i
			-
			(\Transport v^i) \partial_a \GradEnt^a
		\right\rbrace
		\notag \\
	& \ \
		+
		\exp(-3 \Densrenormalized) \Speed^{-2} \frac{p_{;\Ent}}{\bar{\varrho}}  
		\left\lbrace
			(\Transport v^a) \partial_a \GradEnt^i
			- 
			(\partial_a v^a) (\Transport \GradEnt^i)
		\right\rbrace
		\notag \\
& \ \
		+ 
		\exp(-3 \Densrenormalized) \Speed^{-2} \frac{p_{;\Ent}}{\bar{\varrho}}
		\left\lbrace
			(\partial_a v^b) (\partial_b v^a) \GradEnt^i
			-
			(\partial_a v^a) (\partial_b v^b) \GradEnt^i
		\right\rbrace
			\notag \\
& \ \
		+ 
		\exp(-3 \Densrenormalized) \Speed^{-2} \frac{p_{;\Ent}}{\bar{\varrho}}
		\left\lbrace
			(\partial_a v^a) \GradEnt^b \partial_b v^i 
			- 
			(\GradEnt^a \partial_a v^b) \partial_b v^i 
		\right\rbrace
		\notag \\
	& \ \
		+ 
		2 \exp(-3 \Densrenormalized) \Speed^{-2} \frac{p_{;\Ent}}{\bar{\varrho}} 
		\left\lbrace
			(\GradEnt^a \partial_a \Densrenormalized)  \Transport v^i
			- 
			(\Transport \Densrenormalized) \GradEnt^a \partial_a v^i
		\right\rbrace
		\notag \\
	&  \ \
			+ 
			2 \exp(-3 \Densrenormalized) \Speed^{-3} \Speed_{;\Densrenormalized} \frac{p_{;\Ent}}{\bar{\varrho}} 
			\left\lbrace
				(\GradEnt^a \partial_a \Densrenormalized)  \Transport v^i
				- 
				(\Transport \Densrenormalized) \GradEnt^a \partial_a v^i
			\right\rbrace
		\notag \\
	& \ \
		+ 
		\exp(-3 \Densrenormalized) \Speed^{-2} \frac{p_{;\Ent;\Densrenormalized}}{\bar{\varrho}} 
		\left\lbrace
			(\Transport \Densrenormalized) \GradEnt^a \partial_a v^i
			- 
			(\GradEnt^a \partial_a \Densrenormalized)  \Transport v^i
		\right\rbrace
		\notag \\
	& \ \
		+
		\exp(-3 \Densrenormalized) \Speed^{-2} \frac{p_{;\Ent;\Densrenormalized}}{\bar{\varrho}}  
		\GradEnt^i
		\left\lbrace
			(\Transport v^a) (\partial_a \Densrenormalized) 
			-
			(\Transport \Densrenormalized) (\partial_a v^a) 
		\right\rbrace
		\notag \\
	& \ \
		+
		2 \exp(-3 \Densrenormalized) \Speed^{-2} \frac{p_{;\Ent}}{\bar{\varrho}} 
		\GradEnt^i
		\left\lbrace
			(\Transport \Densrenormalized)(\partial_a v^a) 
			- 
			(\Transport v^a) (\partial_a \Densrenormalized) 
		\right\rbrace
		\notag \\
	& \ \
		 	+
			2 \exp(-3 \Densrenormalized) \Speed^{-3} \Speed_{;\Densrenormalized} \frac{p_{;\Ent}}{\bar{\varrho}} 
			\GradEnt^i
			\left\lbrace 
				(\Transport \Densrenormalized)(\partial_a v^a) 
		 		- 
		 		(\Transport v^a) (\partial_a \Densrenormalized) 
		 	\right\rbrace
		 		\notag \\
		& \ \
				+ 
				2 \exp(-3 \Densrenormalized) \Speed^{-3} \Speed_{;\Ent} \frac{p_{;\Ent}}{\bar{\varrho}} 
				(\Transport v^i) \delta_{ab} \GradEnt^a \GradEnt^b
				-
				2 \exp(-3 \Densrenormalized) \Speed^{-3} \Speed_{;\Ent} \frac{p_{;\Ent}}{\bar{\varrho}} 
				\delta_{ab} \GradEnt^a (\Transport v^b) \GradEnt^i
			\notag \\
		& \ \
			+ 
			\exp(-3 \Densrenormalized) \Speed^{-2} \frac{p_{;\Ent;\Ent}}{\bar{\varrho}} (\Transport v^i) \delta_{ab} \GradEnt^a \GradEnt^b
			- 
			\exp(-3 \Densrenormalized) \Speed^{-2} \frac{p_{;\Ent;\Ent}}{\bar{\varrho}} \delta_{ab} (\Transport v^a) \GradEnt^b \GradEnt^i.
			\notag 
\end{align}	
\end{subequations}

\end{lemma}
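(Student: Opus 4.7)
The identity \eqref{E:PROOFFLATDIVOFRENORMALIZEDVORTICITY} is immediate: since $\omega^i = (\Flatcurl v)^i$ satisfies $\partial_a \omega^a \equiv 0$, the Leibniz rule applied to $\Vortrenormalized^i = \exp(-\Densrenormalized) \omega^i$ gives
\[
\Flatdiv \Vortrenormalized
= \exp(-\Densrenormalized) \partial_a \omega^a - \exp(-\Densrenormalized) \omega^a \partial_a \Densrenormalized
= - \Vortrenormalized^a \partial_a \Densrenormalized.
\]
All of the real work is in deriving \eqref{E:PROOFEVOLUTIONEQUATIONFLATCURLRENORMALIZEDVORTICITY}.

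My plan for \eqref{E:PROOFEVOLUTIONEQUATIONFLATCURLRENORMALIZEDVORTICITY} is to apply the operator $\exp(-\Densrenormalized) \epsilon_{iab} \partial_a$ to the already-established transport equation \eqref{E:RESTATEDRENORMALIZEDVORTICITYEVOLUTIONEQUATION} for $\Vortrenormalized^k$, and then absorb the terms that obstruct a clean null-form right-hand side into the other two pieces of $\CurlofVortrenormalized^i$ from \eqref{E:RENORMALIZEDCURLOFSPECIFICVORTICITY}. The computation relies on three ingredients that I would carry out separately and then combine: (i) the commutator identity $[\Transport, \partial_a] = -(\partial_a v^b) \partial_b$ (which I need both when moving $\partial_a$ through $\Transport \Vortrenormalized^k$ and when computing time derivatives of $\partial_a v^i$ and $\partial_a \GradEnt^i$); (ii) the Leibniz rule applied to the scalar coefficient $\exp(-3\Densrenormalized) \Speed^{-2} \frac{p_{;\Ent}}{\bar{\varrho}}$, treated as a function of $(\Densrenormalized, \Ent)$ via \eqref{E:FUNCNSOFDENSANDENT}, whose transport derivative produces the $p_{;\Ent;\Densrenormalized}$, $p_{;\Ent;\Ent}$, $\Speed_{;\Densrenormalized}$ and $\Speed_{;\Ent}$ terms displayed on RHS~\eqref{E:PROOFEVOLUTIONEQUATIONFLATCURLRENORMALIZEDVORTICITY} (note that $\Transport \Densrenormalized$ and $\Transport \Ent$ are available for substitution via \eqref{E:TRANSPORTDENSRENORMALIZEDRELATIVETORECTANGULAR} and \eqref{E:PROOFENTROPYTRANSPORTMAINSYSTEM}); and (iii) the epsilon contraction identity $\epsilon_{iab}\epsilon_{cdb} = \delta_{ic}\delta_{ad} - \delta_{id}\delta_{ac}$, used repeatedly to convert contractions of $\Flatcurl$-type expressions into the divergence and gradient pieces that appear on the target right-hand side.

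The main obstacle, and the source of the precise form of the definition \eqref{E:RENORMALIZEDCURLOFSPECIFICVORTICITY} of $\CurlofVortrenormalized^i$, is the term produced by applying $\exp(-\Densrenormalized) \epsilon_{iab} \partial_a$ to the last summand $-\exp(-2\Densrenormalized) \Speed^{-2} \frac{p_{;\Ent}}{\bar{\varrho}} \epsilon_{bcd} (\Transport v^c) \GradEnt^d$ on RHS~\eqref{E:RESTATEDRENORMALIZEDVORTICITYEVOLUTIONEQUATION}: when the derivative falls on $\Transport v^c$, one generates $\exp(-3\Densrenormalized) \Speed^{-2} \frac{p_{;\Ent}}{\bar{\varrho}} \epsilon_{iab} \epsilon_{bcd}(\partial_a \Transport v^c) \GradEnt^d$, and after commuting the $\partial_a$ through $\Transport$ and using the epsilon contraction, one uncovers non-null-form remainders that are linear in $\Transport \GradEnt^a$ and $\Transport v^a$. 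These are precisely the pieces that must be paired with $\Transport$ of the second and third summands in \eqref{E:RENORMALIZEDCURLOFSPECIFICVORTICITY} so that, after using \eqref{E:PROOFGRADENTROPYTRANSPORT} to re-express $\Transport \GradEnt$ and \eqref{E:TRANSPORTVELOCITYRELATIVETORECTANGULAR} to re-express $\Transport v$, the dangerous contributions cancel.

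I expect the main difficulty to be purely bookkeeping. The proof produces a dozen or so terms distinguished by their coefficient structure ($\frac{p_{;\Ent}}{\bar\varrho}$, $\Speed^{-1}\Speed_{;\Densrenormalized}\frac{p_{;\Ent}}{\bar\varrho}$, $\frac{p_{;\Ent;\Densrenormalized}}{\bar\varrho}$, $\Speed^{-1}\Speed_{;\Ent}\frac{p_{;\Ent}}{\bar\varrho}$, $\frac{p_{;\Ent;\Ent}}{\bar\varrho}$, \ldots), and within each coefficient class one must verify, using only the antisymmetry of $\epsilon_{\dots}$ and the identities above, that the contributions that are not of null-form or linear type exactly cancel. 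The cleanest way to organize the calculation is to commit at the outset to the definition of $\CurlofVortrenormalized^i$ from \eqref{E:RENORMALIZEDCURLOFSPECIFICVORTICITY} and then verify class-by-class that $\Transport \CurlofVortrenormalized^i$ matches RHS~\eqref{E:PROOFEVOLUTIONEQUATIONFLATCURLRENORMALIZEDVORTICITY}; each cancellation can be traced back to the fact that \eqref{E:RENORMALIZEDCURLOFSPECIFICVORTICITY} was reverse-engineered to produce exactly those cancellations.
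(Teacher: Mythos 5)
Your proposal is correct and follows the same route as the paper: commute the transport equation for $\Vortrenormalized^i$ with $\Flatcurl$ (handling the $\exp(-\Densrenormalized)$ prefactor by Leibniz), rewrite the non-null-form product $\epsilon_{iab}\left\lbrace(\partial_a\Vortrenormalized^d)\partial_d v^b - (\partial_a v^d)\partial_d\Vortrenormalized^b\right\rbrace$ via the $\epsilon$-contraction identity (this is the paper's identity \eqref{E:REWRITINGOFCURLVORTICITYNULLFORM}), and absorb the $\Transport\partial_a v$-type obstruction terms by bringing the remaining two summands of $\CurlofVortrenormalized^i$ from Def.~\ref{D:RENORMALIZEDCURLOFSPECIFICVORTICITY} under the material derivative. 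One minor correction to your sketch: since $\Transport\Ent = 0$, the transport derivative of the coefficient $\exp(-2\Densrenormalized)\Speed^{-2}p_{;\Ent}/\bar{\varrho}$ cannot produce the $\Speed_{;\Ent}$ and $p_{;\Ent;\Ent}$ terms on RHS~\eqref{E:PROOFEVOLUTIONEQUATIONFLATCURLRENORMALIZEDVORTICITY}; those come from the spatial derivative $\partial_a$ of the coefficient (whence the $\GradEnt^a$ factors contracting against them), while only the $\Speed_{;\Densrenormalized}$ and $p_{;\Ent;\Densrenormalized}$ terms arise from $\Transport$ hitting the coefficient.
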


\begin{proof}
	Equation \eqref{E:PROOFFLATDIVOFRENORMALIZEDVORTICITY} follows easily from
	applying the operator $\Flatdiv$ to equation
	\eqref{E:VORTICIITYRENORMALIZEDVORTICITYANDDENSITYRELATION}
	and noting that since $\omega = \Flatcurl v$,
	we have $\Flatdiv \omega = 0$.
	
	We now derive \eqref{E:PROOFEVOLUTIONEQUATIONFLATCURLRENORMALIZEDVORTICITY}.
	First, commuting the already established equation \eqref{E:RESTATEDRENORMALIZEDVORTICITYEVOLUTIONEQUATION}
	with the operator $\Flatcurl$
	and using the definitions
	\eqref{E:ENTROPYGRADIENT}
	and
	\eqref{E:MATERIALVECTORVIELDRELATIVETORECTANGULAR}
	as well as equations 
	\eqref{E:VORTICIITYRENORMALIZEDVORTICITYANDDENSITYRELATION}
	and
	\eqref{E:PROOFFLATDIVOFRENORMALIZEDVORTICITY},
	we compute that
	\begin{align} \label{E:FIRSTSTEPEVOLUTIONEQUATIONFLATCURLRENORMALIZEDVORTICITY}
	\Transport (\Flatcurl \Vortrenormalized)^i
	& 	= \Vortrenormalized^a \partial_a \omega^i
			+ 
			\epsilon_{iab} (\partial_a \Vortrenormalized^d) \partial_d v^b
			- 
			\epsilon_{iab} (\partial_a v^d) \partial_d \Vortrenormalized^b
				\\
	& \ \
			- 
			\epsilon_{iab} \epsilon_{bjk} \partial_a
				\left\lbrace
					\exp(-2 \Densrenormalized) \Speed^{-2}\frac{p_{;\Ent}}{\bar{\varrho}} (\Transport v^j) \GradEnt^k
				\right\rbrace
				\notag \\
	& = (\exp \Densrenormalized) \Vortrenormalized^a \partial_a \Vortrenormalized^i
			+ 
			(\exp \Densrenormalized) \Vortrenormalized^i \Vortrenormalized^a \partial_a \Densrenormalized
			+ 
			\epsilon_{iab} (\partial_a \Vortrenormalized^d) \partial_d v^b
			- 
			\epsilon_{iab} (\partial_a v^d) \partial_d \Vortrenormalized^b
				\notag \\
	& \ \
			+ 2 \epsilon_{iab} \epsilon_{bjk} (\partial_a \Densrenormalized)
				\left\lbrace
					\exp(-2 \Densrenormalized) \Speed^{-2} \frac{p_{;\Ent}}{\bar{\varrho}} (\Transport v^j) \GradEnt^k
				\right\rbrace
				\notag \\
	& \ \
			+ 2 \epsilon_{iab} \epsilon_{bjk} (\partial_a \Densrenormalized)
				\left\lbrace
					\exp(-2 \Densrenormalized) \Speed^{-3} \Speed_{;\Densrenormalized} \frac{p_{;\Ent}}{\bar{\varrho}} 
					(\Transport v^j) \GradEnt^k
				\right\rbrace
				\notag \\
	& \ \
				+ 2 \epsilon_{iab} \epsilon_{bjk} \GradEnt^a
				\left\lbrace
					\exp(-2 \Densrenormalized) \Speed^{-3} \Speed_{;\Ent} \frac{p_{;\Ent}}{\bar{\varrho}} (\Transport v^j) \GradEnt^k
				\right\rbrace
				\notag \\
	& \ \
			- \epsilon_{iab} \epsilon_{bjk} (\partial_a \Densrenormalized)
				\left\lbrace
					\exp(-2 \Densrenormalized) \Speed^{-2} \frac{p_{;\Ent;\Densrenormalized}}{\bar{\varrho}}  (\Transport v^j) \GradEnt^k
				\right\rbrace
				\notag \\
	& \ \
			- \epsilon_{iab} \epsilon_{bjk} \GradEnt^a
				\left\lbrace
					\exp(-2 \Densrenormalized) \Speed^{-2} \frac{p_{;\Ent;\Ent}}{\bar{\varrho}}  (\Transport v^j) \GradEnt^k
				\right\rbrace
				\notag \\
	& \ \
			- \epsilon_{iab} \epsilon_{bjk} 
				\left\lbrace
					\exp(-2 \Densrenormalized) \Speed^{-2} \frac{p_{;\Ent}}{\bar{\varrho}} (\partial_a v^d) (\partial_d v^j) \GradEnt^k
				\right\rbrace
				\notag \\
	& \ \
			- \epsilon_{iab} \epsilon_{bjk} 
				\left\lbrace
					\exp(-2 \Densrenormalized) \Speed^{-2} \frac{p_{;\Ent}}{\bar{\varrho}} (\Transport \partial_a v^j) \GradEnt^k
				\right\rbrace
				\notag \\
	& \ \
			- \epsilon_{iab} \epsilon_{bjk} 
				\left\lbrace
					\exp(-2 \Densrenormalized) \Speed^{-2} \frac{p_{;\Ent}}{\bar{\varrho}} (\Transport v^j) \partial_a \GradEnt^k
				\right\rbrace.
				\notag 
\end{align}

Next, using the identity
$
\epsilon_{iab} \epsilon_{jkb}
	= \delta_{ij}\delta_{ak} 
		- 
		\delta_{ik} \delta_{aj}
$
and the antisymmetry of $\epsilon_{\cdots}$,
we rewrite the third and fourth terms on RHS~\eqref{E:FIRSTSTEPEVOLUTIONEQUATIONFLATCURLRENORMALIZEDVORTICITY}
as follows:
\begin{align} \label{E:REWRITINGOFCURLVORTICITYNULLFORM}
	\epsilon_{iab} (\partial_a \Vortrenormalized^d) \partial_d v^b
	 - 
	\epsilon_{iab} (\partial_a v^d) \partial_d \Vortrenormalized^b
	& = 
			- 
			2 \delta_{jk} \epsilon_{iab} (\partial_a v^j) \partial_b \Vortrenormalized^k
				\\
	& \ \
			+ 
			\epsilon_{iab} (\partial_a \Vortrenormalized^d) (\partial_d v^b - \partial_b v^d)
			+ 
			\epsilon_{iab} (\partial_a v^d) (\partial_b \Vortrenormalized^d - \partial_d \Vortrenormalized^b)
				\notag \\
		& = 
			- 
			2 \delta_{jk} \epsilon_{iab} (\partial_a v^j) \partial_b \Vortrenormalized^k
			+
			\epsilon_{iab} \epsilon_{jdb} (\partial_a \Vortrenormalized^d) \omega^j
			+ 
			\epsilon_{iab} \epsilon_{jbd}  (\partial_a v^d) (\Flatcurl \Vortrenormalized)^j
			\notag \\
		& = 
			- 
			2 \delta_{jk} \epsilon_{iab} (\partial_a v^j) \partial_b \Vortrenormalized^k
			+
			\exp(\Densrenormalized) (\Flatdiv \Vortrenormalized) \Vortrenormalized^i
			-
			\exp(\Densrenormalized) \Vortrenormalized^a \partial_a \Vortrenormalized^i
				\notag \\
		& \ \
			+ 
			(\Flatcurl \Vortrenormalized)^a \partial_a v^i
			-
			(\partial_a v^a) (\Flatcurl \Vortrenormalized)^i.
			\notag
\end{align}

Substituting RHS~\eqref{E:REWRITINGOFCURLVORTICITYNULLFORM}
for the third and fourth terms on RHS~\eqref{E:FIRSTSTEPEVOLUTIONEQUATIONFLATCURLRENORMALIZEDVORTICITY},
using equation \eqref{E:PROOFFLATDIVOFRENORMALIZEDVORTICITY} for substitution,
and using the identities
$
(\Flatcurl \Vortrenormalized)^a \partial_a v^i
= 
\epsilon_{ajk} (\partial_a v^i) \partial_j \Vortrenormalized^k
$
and 
$\epsilon_{iab} \epsilon_{bjk}
	= \delta_{ij}\delta_{ak} 
		- 
		\delta_{ik} \delta_{aj}
$,
we compute that
\begin{align} \label{E:SECONDSTEPEVOLUTIONEQUATIONFLATCURLRENORMALIZEDVORTICITY}
	\Transport (\Flatcurl \Vortrenormalized)^i
	& = - 
			2 \delta_{jk} \epsilon_{iab} (\partial_a v^j) \partial_b \Vortrenormalized^k
			-
			(\partial_a v^a) (\Flatcurl \Vortrenormalized)^i
			+
			\epsilon_{ajk} (\partial_a v^i) \partial_j \Vortrenormalized^k
			\\
	& \ \
		 	+ 2 \exp(-2 \Densrenormalized) \Speed^{-2} \frac{p_{;\Ent}}{\bar{\varrho}} 
		 		(\GradEnt^a \partial_a \Densrenormalized)  \Transport v^i
		 	- 2 \exp(-2 \Densrenormalized) \Speed^{-2} \frac{p_{;\Ent}}{\bar{\varrho}}
		 		(\Transport v^a) (\partial_a \Densrenormalized) \GradEnt^i
		 	\notag \\
	& \ \
		 	+ 2 \exp(-2 \Densrenormalized) \Speed^{-3} \Speed_{;\Densrenormalized} 
		 			\frac{p_{;\Ent}}{\bar{\varrho}} (\GradEnt^a \partial_a \Densrenormalized)  \Transport v^i
		 	- 2 \exp(-2 \Densrenormalized) \Speed^{-3} \Speed_{;\Densrenormalized} \frac{p_{;\Ent}}{\bar{\varrho}} 
		 			(\Transport v^a) (\partial_a \Densrenormalized) \GradEnt^i
		 		\notag \\
		& \ \
				+ 
				2 \exp(-2 \Densrenormalized) \Speed^{-3} \Speed_{;\Ent} \frac{p_{;\Ent}}{\bar{\varrho}} 
				(\Transport v^i) \delta_{ab} \GradEnt^a \GradEnt^b
				-
				2 \exp(-2 \Densrenormalized) \Speed^{-3} \Speed_{;\Ent} \frac{p_{;\Ent}}{\bar{\varrho}} 
				\delta_{ab} \GradEnt^a (\Transport v^b) \GradEnt^i
			\notag \\
	 & \ \
		 	+ 
		 	\exp(-2 \Densrenormalized) \Speed^{-2} \frac{p_{;\Ent;\Densrenormalized}}{\bar{\varrho}}
		 	(\Transport v^a) (\partial_a \Densrenormalized) \GradEnt^i
		 	- 
		 	\exp(-2 \Densrenormalized) \Speed^{-2} \frac{p_{;\Ent;\Densrenormalized}}{\bar{\varrho}}
		 	(\GradEnt^a \partial_a \Densrenormalized)  \Transport v^i
		 	\notag \\
	 & \ \
			+ \exp(-2 \Densrenormalized) \Speed^{-2} \frac{p_{;\Ent;\Ent}}{\bar{\varrho}} 
				(\Transport v^i) \delta_{ab} \GradEnt^a \GradEnt^b
			- \exp(-2 \Densrenormalized) \Speed^{-2} \frac{p_{;\Ent;\Ent}}{\bar{\varrho}} \delta_{ab} (\Transport v^a) \GradEnt^b \GradEnt^i
			\notag \\
		& \ \
			+ 
			\exp(-2 \Densrenormalized) \Speed^{-2} \frac{p_{;\Ent}}{\bar{\varrho}} (\partial_a v^b) (\partial_b v^a) \GradEnt^i
			- 
			\exp(-2 \Densrenormalized) \Speed^{-2} \frac{p_{;\Ent}}{\bar{\varrho}}(\GradEnt^a \partial_a v^b) \partial_b v^i 
			\notag \\
		& \ \
			+ 
			\exp(-2 \Densrenormalized) \Speed^{-2} \frac{p_{;\Ent}}{\bar{\varrho}} (\Transport \partial_a v^a) \GradEnt^i
			- 
			\exp(-2 \Densrenormalized) \Speed^{-2} \frac{p_{;\Ent}}{\bar{\varrho}} \GradEnt^a \Transport \partial_a v^i
				\notag \\
		& \ \
			+ 
			\exp(-2 \Densrenormalized) \Speed^{-2} \frac{p_{;\Ent}}{\bar{\varrho}}  (\Transport v^a) \partial_a \GradEnt^i
			-
			\exp(-2 \Densrenormalized) \Speed^{-2} \frac{p_{;\Ent}}{\bar{\varrho}}  (\Transport v^i) \partial_a \GradEnt^a.
			\notag
	\end{align}
We now bring the terms
$
\displaystyle
\exp(-2 \Densrenormalized) \Speed^{-2} \frac{p_{;\Ent}}{\bar{\varrho}} (\Transport \partial_a v^a) \GradEnt^i
- 
\exp(-2 \Densrenormalized) \Speed^{-2} \frac{p_{;\Ent}}{\bar{\varrho}} \GradEnt^a \Transport \partial_a v^i
$	
from 
the next-to-last line of RHS~\eqref{E:SECONDSTEPEVOLUTIONEQUATIONFLATCURLRENORMALIZEDVORTICITY} over to the left
under the transport operator $\Transport$, which generates some additional terms on the RHS
(note that by \eqref{E:ENTROPYTRANSPORT}, the additional terms that depend on $\Transport \Ent$
completely vanish).
In total, we obtain
\begin{align} \label{E:THIRDSTEPEVOLUTIONEQUATIONFLATCURLRENORMALIZEDVORTICITY}
&
\Transport 
\left\lbrace
	(\Flatcurl \Vortrenormalized)^i
	+
	\exp(-2 \Densrenormalized) \Speed^{-2} \frac{p_{;\Ent}}{\bar{\varrho}} \GradEnt^a \partial_a v^i
	-
	\exp(-2 \Densrenormalized) \Speed^{-2} \frac{p_{;\Ent}}{\bar{\varrho}} (\partial_a v^a) \GradEnt^i
\right\rbrace
	\\
& = -
		(\partial_a v^a) (\Flatcurl \Vortrenormalized)^i
		\notag \\
& \ \
		- 
		2 \delta_{jk} \epsilon_{iab} (\partial_a v^j) \partial_b \Vortrenormalized^k
		+
		\epsilon_{ajk} (\partial_a v^i) \partial_j \Vortrenormalized^k
		\notag
				\\
	& \ \
		- 
		2 \exp(-2 \Densrenormalized) \Speed^{-2} \frac{p_{;\Ent}}{\bar{\varrho}} (\Transport \Densrenormalized) \GradEnt^a \partial_a v^i
		- 
			2 \exp(-2 \Densrenormalized) \Speed^{-3} \Speed_{;\Densrenormalized} 
			\frac{p_{;\Ent}}{\bar{\varrho}} (\Transport \Densrenormalized) \GradEnt^a \partial_a v^i
		\notag \\
	& \ \
		+ 
		\exp(-2 \Densrenormalized) \Speed^{-2} \frac{p_{;\Ent;\Densrenormalized}}{\bar{\varrho}} 
		(\Transport \Densrenormalized) \GradEnt^a \partial_a v^i
		\notag \\
	& \ \
		+ 
		\exp(-2 \Densrenormalized) \Speed^{-2} \frac{p_{;\Ent}}{\bar{\varrho}} (\Transport \GradEnt^a) \partial_a v^i
		\notag \\
	& \ \
		+
		2 \exp(-2 \Densrenormalized) \Speed^{-2} \frac{p_{;\Ent}}{\bar{\varrho}} (\Transport \Densrenormalized)(\partial_a v^a) \GradEnt^i
		+
		2 \exp(-2 \Densrenormalized) \Speed^{-3} \Speed_{;\Densrenormalized} \frac{p_{;\Ent}}{\bar{\varrho}} 
		(\Transport \Densrenormalized)(\partial_a v^a) \GradEnt^i
		\notag \\
	& \ \
		-
		\exp(-2 \Densrenormalized) \Speed^{-2} \frac{p_{;\Ent;\Densrenormalized}}{\bar{\varrho}} 
		(\Transport \Densrenormalized) (\partial_a v^a) \GradEnt^i
		\notag \\
	& \ \
		- \exp(-2 \Densrenormalized) \Speed^{-2} \frac{p_{;\Ent}}{\bar{\varrho}} (\partial_a v^a) (\Transport \GradEnt^i)
		\notag \\
  & \ \
		 	+ 2 \exp(-2 \Densrenormalized) \Speed^{-2} \frac{p_{;\Ent}}{\bar{\varrho}} 
		 		(\GradEnt^a \partial_a \Densrenormalized)  \Transport v^i
		 	- 2 \exp(-2 \Densrenormalized) \Speed^{-2} \frac{p_{;\Ent}}{\bar{\varrho}}
		 		(\Transport v^a) (\partial_a \Densrenormalized) \GradEnt^i
		 	\notag \\
	& \ \
		 	+ 2 \exp(-2 \Densrenormalized) \Speed^{-3} \Speed_{;\Densrenormalized} 
		 			\frac{p_{;\Ent}}{\bar{\varrho}} (\GradEnt^a \partial_a \Densrenormalized)  \Transport v^i
		 	- 2 \exp(-2 \Densrenormalized) \Speed^{-3} \Speed_{;\Densrenormalized} \frac{p_{;\Ent}}{\bar{\varrho}} 
		 			(\Transport v^a) (\partial_a \Densrenormalized) \GradEnt^i
		 		\notag \\
		& \ \
				+ 
				2 \exp(-2 \Densrenormalized) \Speed^{-3} \Speed_{;\Ent} \frac{p_{;\Ent}}{\bar{\varrho}} 
				(\Transport v^i) \delta_{ab} \GradEnt^a \GradEnt^b
				-
				2 \exp(-2 \Densrenormalized) \Speed^{-3} \Speed_{;\Ent} \frac{p_{;\Ent}}{\bar{\varrho}} 
				\delta_{ab} \GradEnt^a (\Transport v^b) \GradEnt^i
			\notag \\
		& \ \
		 	+ 
		 	\exp(-2 \Densrenormalized) \Speed^{-2} \frac{p_{;\Ent;\Densrenormalized}}{\bar{\varrho}}
		 	(\Transport v^a) (\partial_a \Densrenormalized) \GradEnt^i
		 	- 
		 	\exp(-2 \Densrenormalized) \Speed^{-2} \frac{p_{;\Ent;\Densrenormalized}}{\bar{\varrho}}
		 	(\GradEnt^a \partial_a \Densrenormalized)  \Transport v^i
		 	\notag \\
		& \ \
			+ 
				\exp(-2 \Densrenormalized) \Speed^{-2} \frac{p_{;\Ent;\Ent}}{\bar{\varrho}} 
				(\Transport v^i) \delta_{ab} \GradEnt^a \GradEnt^b
			- 
			\exp(-2 \Densrenormalized) \Speed^{-2} \frac{p_{;\Ent;\Ent}}{\bar{\varrho}} \delta_{ab} (\Transport v^a) \GradEnt^b \GradEnt^i
			\notag \\
		& \ \
			+ 
			\exp(-2 \Densrenormalized) \Speed^{-2} \frac{p_{;\Ent}}{\bar{\varrho}} (\partial_a v^b) (\partial_b v^a) \GradEnt^i
			- 
			\exp(-2 \Densrenormalized) \Speed^{-2} \frac{p_{;\Ent}}{\bar{\varrho}}(\GradEnt^a \partial_a v^b) \partial_b v^i 
			\notag \\
		& \ \
			+ 
			\exp(-2 \Densrenormalized) \Speed^{-2} \frac{p_{;\Ent}}{\bar{\varrho}}  (\Transport v^a) \partial_a \GradEnt^i
			-
			\exp(-2 \Densrenormalized) \Speed^{-2} \frac{p_{;\Ent}}{\bar{\varrho}}  (\Transport v^i) \partial_a \GradEnt^a.
			\notag
\end{align}

Next, we add 
$
\displaystyle
-
\exp(-2 \Densrenormalized) \Speed^{-2} \frac{p_{;\Ent}}{\bar{\varrho}}(\partial_a v^a) \GradEnt^b \partial_b v^i
+
\exp(-2 \Densrenormalized) \Speed^{-2} \frac{p_{;\Ent}}{\bar{\varrho}}(\partial_a v^a)^2 \GradEnt^i
$
to the first line of RHS~\eqref{E:THIRDSTEPEVOLUTIONEQUATIONFLATCURLRENORMALIZEDVORTICITY},
subtract the same terms on a different line,
use equation \eqref{E:TRANSPORTDENSRENORMALIZEDRELATIVETORECTANGULAR}
to substitute $\partial_a v^a = - \Transport \Densrenormalized$ for some factors,
and rearrange the terms to deduce
\begin{align}  \label{E:FOURTHSTEPEVOLUTIONEQUATIONFLATCURLRENORMALIZEDVORTICITY}
&
\Transport 
\left\lbrace
	(\Flatcurl \Vortrenormalized)^i
	+
	\exp(-2 \Densrenormalized) \Speed^{-2} \frac{p_{;\Ent}}{\bar{\varrho}} \GradEnt^a \partial_a v^i
	-
	\exp(-2 \Densrenormalized) \Speed^{-2} \frac{p_{;\Ent}}{\bar{\varrho}} (\partial_a v^a) \GradEnt^i
\right\rbrace
	\\
& = 
		(\Transport \Densrenormalized) (\Flatcurl \Vortrenormalized)^i
		+ 
		\exp(-2 \Densrenormalized) \Speed^{-2} \frac{p_{;\Ent}}{\bar{\varrho}} (\Transport \Densrenormalized) \GradEnt^a \partial_a v^i 
		- 
		\exp(-2 \Densrenormalized) \Speed^{-2} \frac{p_{;\Ent}}{\bar{\varrho}}(\Transport \Densrenormalized)(\partial_a v^a) \GradEnt^i
		\notag \\
& \ \
		+ 
		\exp(-2 \Densrenormalized) \Speed^{-2} \frac{p_{;\Ent}}{\bar{\varrho}} (\Transport \GradEnt^a) \partial_a v^i
		-
		\exp(-2 \Densrenormalized) \Speed^{-2} \frac{p_{;\Ent}}{\bar{\varrho}}  (\Transport v^i) \partial_a \GradEnt^a
		\notag \\
	& \ \
		+
		\exp(-2 \Densrenormalized) \Speed^{-2} \frac{p_{;\Ent}}{\bar{\varrho}}  (\Transport v^a) \partial_a \GradEnt^i
		- 
		\exp(-2 \Densrenormalized) \Speed^{-2} \frac{p_{;\Ent}}{\bar{\varrho}} (\partial_a v^a) (\Transport \GradEnt^i)
		\notag \\
& \ \
		- 
		2 \delta_{jk} \epsilon_{iab} (\partial_a v^j) \partial_b \Vortrenormalized^k
		+
		\epsilon_{ajk}
		(\partial_a v^i) 
		\partial_j \Vortrenormalized^k
		\notag
				\\
& \ \
		+ 
		\exp(-2 \Densrenormalized) \Speed^{-2} \frac{p_{;\Ent}}{\bar{\varrho}}(\partial_a v^b) (\partial_b v^a) \GradEnt^i
		-
		\exp(-2 \Densrenormalized) \Speed^{-2} \frac{p_{;\Ent}}{\bar{\varrho}}(\partial_a v^a)^2 \GradEnt^i
			\notag \\
& \ \
		+ 
		\exp(-2 \Densrenormalized) \Speed^{-2} \frac{p_{;\Ent}}{\bar{\varrho}}(\partial_a v^a) \GradEnt^b \partial_b v^i 
		- 
		\exp(-2 \Densrenormalized) \Speed^{-2} \frac{p_{;\Ent}}{\bar{\varrho}}(\GradEnt^a \partial_a v^b) \partial_b v^i 
		\notag \\
	& \ \
		+ 2 \exp(-2 \Densrenormalized) \Speed^{-2} \frac{p_{;\Ent}}{\bar{\varrho}} (\GradEnt^a \partial_a \Densrenormalized)  \Transport v^i
		- 
		2 \exp(-2 \Densrenormalized) \Speed^{-2} \frac{p_{;\Ent}}{\bar{\varrho}} (\Transport \Densrenormalized) \GradEnt^a \partial_a v^i
		\notag \\
	&  \ \
			+ 
			2 \exp(-2 \Densrenormalized) \Speed^{-3} \Speed_{;\Densrenormalized}  
			\frac{p_{;\Ent}}{\bar{\varrho}} (\GradEnt^a \partial_a \Densrenormalized)  \Transport v^i
			- 
			2 \exp(-2 \Densrenormalized) \Speed^{-3} \Speed_{;\Densrenormalized} 
			\frac{p_{;\Ent}}{\bar{\varrho}} (\Transport \Densrenormalized) \GradEnt^a \partial_a v^i
		\notag \\
	& \ \
		+ 
		\exp(-2 \Densrenormalized) \Speed^{-2} \frac{p_{;\Ent;\Densrenormalized}}{\bar{\varrho}} (\Transport \Densrenormalized) \GradEnt^a \partial_a v^i
		- 
		\exp(-2 \Densrenormalized) \Speed^{-2} \frac{p_{;\Ent;\Densrenormalized}}{\bar{\varrho}} (\GradEnt^a \partial_a \Densrenormalized)  \Transport v^i
		\notag \\
	& \ \
		+
		\exp(-2 \Densrenormalized) \Speed^{-2} \frac{p_{;\Ent;\Densrenormalized}}{\bar{\varrho}}  (\Transport v^a) (\partial_a \Densrenormalized) \GradEnt^i
		-
		\exp(-2 \Densrenormalized) \Speed^{-2} \frac{p_{;\Ent;\Densrenormalized}}{\bar{\varrho}} (\Transport \Densrenormalized) (\partial_a v^a) \GradEnt^i
		\notag \\
	& \ \
		+
		2 \exp(-2 \Densrenormalized) \Speed^{-2} \frac{p_{;\Ent}}{\bar{\varrho}} (\Transport \Densrenormalized)(\partial_a v^a) \GradEnt^i
		- 
		2 \exp(-2 \Densrenormalized) \Speed^{-2} \frac{p_{;\Ent}}{\bar{\varrho}}  (\Transport v^a) (\partial_a \Densrenormalized) \GradEnt^i
		\notag \\
	& \ \
		 	+
			2 \exp(-2 \Densrenormalized) \Speed^{-3} \Speed_{;\Densrenormalized} \frac{p_{;\Ent}}{\bar{\varrho}} 
			(\Transport \Densrenormalized)(\partial_a v^a) \GradEnt^i
		 	- 
		 	2 \exp(-2 \Densrenormalized) \Speed^{-3} \Speed_{;\Densrenormalized} \frac{p_{;\Ent}}{\bar{\varrho}}  
		 	(\Transport v^a) (\partial_a \Densrenormalized) \GradEnt^i
		 		\notag \\
		& \ \
				+ 
				2 \exp(-2 \Densrenormalized) \Speed^{-3} \Speed_{;\Ent} \frac{p_{;\Ent}}{\bar{\varrho}} 
				(\Transport v^i) \delta_{ab} \GradEnt^a \GradEnt^b
				-
				2 \exp(-2 \Densrenormalized) \Speed^{-3} \Speed_{;\Ent} \frac{p_{;\Ent}}{\bar{\varrho}} 
				\delta_{ab} \GradEnt^a (\Transport v^b) \GradEnt^i
			\notag \\
		& \ \
			+ 
			\exp(-2 \Densrenormalized) \Speed^{-2} \frac{p_{;\Ent;\Ent}}{\bar{\varrho}} (\Transport v^i) \delta_{ab} \GradEnt^a \GradEnt^b
			- 
			\exp(-2 \Densrenormalized) \Speed^{-2} \frac{p_{;\Ent;\Ent}}{\bar{\varrho}} \delta_{ab} (\Transport v^a) \GradEnt^b \GradEnt^i.
			\notag 
\end{align}	

We now multiply both sides of \eqref{E:FOURTHSTEPEVOLUTIONEQUATIONFLATCURLRENORMALIZEDVORTICITY}
by $\exp(-\Densrenormalized)$ and bring the factor of 
$\exp(-\Densrenormalized)$ under the operator $\Transport$ on the LHS.
The commutator term $(\Transport \exp(-\Densrenormalized)) \times \cdots$ 
completely cancels the first line of RHS~\eqref{E:FOURTHSTEPEVOLUTIONEQUATIONFLATCURLRENORMALIZEDVORTICITY},
which therefore yields
\begin{align}
&
\Transport 
\left\lbrace
	\exp(-\Densrenormalized) (\Flatcurl \Vortrenormalized)^i
	+
	\exp(-3 \Densrenormalized) \Speed^{-2} \frac{p_{;\Ent}}{\bar{\varrho}} \GradEnt^a \partial_a v^i
	-
	\exp(-3 \Densrenormalized) \Speed^{-2} \frac{p_{;\Ent}}{\bar{\varrho}} (\partial_a v^a) \GradEnt^i
\right\rbrace
	\label{E:PROOFFINALEVOLUTIONEQUATIONFLATCURLRENORMALIZEDVORTICITY} 
	\\
& 	=
		- 
		2 \delta_{jk} \epsilon_{iab} \exp(-\Densrenormalized) (\partial_a v^j) \partial_b \Vortrenormalized^k
		+
		\epsilon_{ajk}
		\exp(-\Densrenormalized)
		(\partial_a v^i) 
		\partial_j \Vortrenormalized^k
		\notag
				\\
&  \ \
		+
		\exp(-3 \Densrenormalized) \Speed^{-2} \frac{p_{;\Ent}}{\bar{\varrho}} (\Transport \GradEnt^a) \partial_a v^i
		-
		\exp(-3 \Densrenormalized) \Speed^{-2} \frac{p_{;\Ent}}{\bar{\varrho}}  (\Transport v^i) \partial_a \GradEnt^a
		\notag \\
	& \ \
		+
		\exp(-3 \Densrenormalized) \Speed^{-2} \frac{p_{;\Ent}}{\bar{\varrho}}  (\Transport v^a) \partial_a \GradEnt^i
		- 
		\exp(-3 \Densrenormalized) \Speed^{-2} \frac{p_{;\Ent}}{\bar{\varrho}} (\partial_a v^a) (\Transport \GradEnt^i)
		\notag \\
& \ \
		+ 
		\exp(-3 \Densrenormalized) \Speed^{-2} \frac{p_{;\Ent}}{\bar{\varrho}}(\partial_a v^b) (\partial_b v^a) \GradEnt^i
		-
		\exp(-3 \Densrenormalized) \Speed^{-2} \frac{p_{;\Ent}}{\bar{\varrho}}(\partial_a v^a)^2 \GradEnt^i
			\notag \\
& \ \
		+ 
		\exp(-3 \Densrenormalized) \Speed^{-2} \frac{p_{;\Ent}}{\bar{\varrho}}(\partial_a v^a) \GradEnt^b \partial_b v^i 
		- 
		\exp(-3 \Densrenormalized) \Speed^{-2} \frac{p_{;\Ent}}{\bar{\varrho}}(\GradEnt^a \partial_a v^b) \partial_b v^i 
		\notag \\
	& \ \
		+ 
		2 \exp(-3 \Densrenormalized) \Speed^{-2} \frac{p_{;\Ent}}{\bar{\varrho}} (\GradEnt^a \partial_a \Densrenormalized)  \Transport v^i
		- 
		2 \exp(-3 \Densrenormalized) \Speed^{-2} \frac{p_{;\Ent}}{\bar{\varrho}} (\Transport \Densrenormalized) \GradEnt^a \partial_a v^i
		\notag \\
	&  \ \
			+ 
			2 \exp(-3 \Densrenormalized) \Speed^{-3} \Speed_{;\Densrenormalized}  
			\frac{p_{;\Ent}}{\bar{\varrho}} (\GradEnt^a \partial_a \Densrenormalized)  \Transport v^i
			- 
			2 \exp(-3 \Densrenormalized) \Speed^{-3} \Speed_{;\Densrenormalized} 
			\frac{p_{;\Ent}}{\bar{\varrho}} (\Transport \Densrenormalized) \GradEnt^a \partial_a v^i
		\notag \\
	& \ \
		+ 
		\exp(-3 \Densrenormalized) \Speed^{-2} \frac{p_{;\Ent;\Densrenormalized}}{\bar{\varrho}} (\Transport \Densrenormalized) \GradEnt^a \partial_a v^i
		- 
		\exp(-3 \Densrenormalized) \Speed^{-2} \frac{p_{;\Ent;\Densrenormalized}}{\bar{\varrho}} (\GradEnt^a \partial_a \Densrenormalized)  \Transport v^i
		\notag \\
	& \ \
		+
		\exp(-3 \Densrenormalized) \Speed^{-2} \frac{p_{;\Ent;\Densrenormalized}}{\bar{\varrho}}  (\Transport v^a) (\partial_a \Densrenormalized) \GradEnt^i
		-
		\exp(-3 \Densrenormalized) \Speed^{-2} \frac{p_{;\Ent;\Densrenormalized}}{\bar{\varrho}} (\Transport \Densrenormalized) (\partial_a v^a) \GradEnt^i
		\notag \\
	& \ \
		+
		2 \exp(-3 \Densrenormalized) \Speed^{-2} \frac{p_{;\Ent}}{\bar{\varrho}} (\Transport \Densrenormalized)(\partial_a v^a) \GradEnt^i
		- 
		2 \exp(-3 \Densrenormalized) \Speed^{-2} \frac{p_{;\Ent}}{\bar{\varrho}}  (\Transport v^a) (\partial_a \Densrenormalized) \GradEnt^i
		\notag \\
	& \ \
		 	+
			2 \exp(-3 \Densrenormalized) \Speed^{-3} \Speed_{;\Densrenormalized} \frac{p_{;\Ent}}{\bar{\varrho}} 
			(\Transport \Densrenormalized)(\partial_a v^a) \GradEnt^i
		 	- 
		 	2 \exp(-3 \Densrenormalized) \Speed^{-3} \Speed_{;\Densrenormalized} \frac{p_{;\Ent}}{\bar{\varrho}}  
		 	(\Transport v^a) (\partial_a \Densrenormalized) \GradEnt^i
		 		\notag \\
		& \ \
				+ 
				2 \exp(-3 \Densrenormalized) \Speed^{-3} \Speed_{;\Ent} \frac{p_{;\Ent}}{\bar{\varrho}} 
				(\Transport v^i) \delta_{ab} \GradEnt^a \GradEnt^b
				-
				2 \exp(-3 \Densrenormalized) \Speed^{-3} \Speed_{;\Ent} \frac{p_{;\Ent}}{\bar{\varrho}} 
				\delta_{ab} \GradEnt^a (\Transport v^b) \GradEnt^i
			\notag \\
		& \ \
			+ 
			\exp(-3 \Densrenormalized) \Speed^{-2} \frac{p_{;\Ent;\Ent}}{\bar{\varrho}} (\Transport v^i) \delta_{ab} \GradEnt^a \GradEnt^b
			- 
			\exp(-3 \Densrenormalized) \Speed^{-2} \frac{p_{;\Ent;\Ent}}{\bar{\varrho}} \delta_{ab} (\Transport v^a) \GradEnt^b \GradEnt^i.
			\notag 
\end{align}	
From equation \eqref{E:PROOFFINALEVOLUTIONEQUATIONFLATCURLRENORMALIZEDVORTICITY}
and definition \eqref{E:RENORMALIZEDCURLOFSPECIFICVORTICITY},
we conclude the desired equation \eqref{E:PROOFEVOLUTIONEQUATIONFLATCURLRENORMALIZEDVORTICITY}.


\end{proof}

\subsection{Deriving the wave equations}
\label{SS:MAINTHEOREMWAVEEQUATIONS}
Recall that the covariant wave operator $\square_g$
is defined in Def.~\ref{E:WAVEOPERATORARBITRARYCOORDINATES}.
In the next lemma, we provide an explicit expression for
$\square_g \phi$ that holds relative to the Cartesian coordinates.

\begin{lemma}[$\square_g$ \textbf{relative to the Cartesian coordinates}]
	\label{L:COVARIANTWAVEOPRELATIVETOCARTESIAN}
	Let $g$ be the acoustical metric from Def.~\ref{D:ACOUSTICALMETRIC}.
	The covariant wave operator $\square_g$ acts on scalar functions $\phi$
	via the following identity, where
	RHS~\eqref{E:COVARIANTWVAEOPERATORINRECTANGULARCOORDINATES}
	is expressed in Cartesian coordinates:
	\begin{align} \label{E:COVARIANTWVAEOPERATORINRECTANGULARCOORDINATES}
	\square_g \phi
	& = - 
			\Transport \Transport \phi
		+ 
			\Speed^2 \delta^{ab} \partial_a \partial_b \phi
		+ 
			2 \Speed^{-1} \Speed_{;\Densrenormalized} (\Transport \Densrenormalized) \Transport \phi
		- 
			(\partial_a v^a) \Transport \phi
		- 
			\Speed^{-1} \Speed_{;\Densrenormalized} (g^{-1})^{\alpha \beta} (\partial_{\alpha} \Densrenormalized) \partial_{\beta} \phi
			\\
	& \ \
			- 
			\Speed \Speed_{;\Ent} \GradEnt^a \partial_a \phi
			+ 
			3 \Speed^{-1} \Speed_{;\Ent} (\Transport \Ent) \Transport \phi.
		\notag
\end{align}
\end{lemma}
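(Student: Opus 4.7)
\medskip

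\noindent \textbf{Proof proposal.} The plan is to start from Definition~\ref{D:COVWAVEOP} and do an explicit computation in Cartesian coordinates, carefully grouping the terms produced by the chain rule so that the $c$-dependence and the derivatives of the thermodynamic variables end up on the right footing. First I will compute $\det g$ for the acoustical metric \eqref{E:ACOUSTICALMETRIC}. Using the Schur-complement decomposition of $g$ into its temporal/spatial blocks (the spatial block being $c^{-2} \delta_{ij}$, with off-diagonal entries $-c^{-2} v^i$ and $g_{00} = -1 + c^{-2} \delta_{ab} v^a v^b$), one finds $\det g = -c^{-6}$, so that $\sqrt{|\det g|} = c^{-3}$. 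Substituting into \eqref{E:WAVEOPERATORARBITRARYCOORDINATES} yields
\begin{align*}
	\square_g \phi
	&=
	\partial_{\alpha}\bigl[ (g^{-1})^{\alpha \beta} \partial_{\beta} \phi \bigr]
	- 3 \Speed^{-1} (\partial_{\alpha} \Speed) (g^{-1})^{\alpha \beta} \partial_{\beta} \phi.
\end{align*}

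Next I will expand the first term using the explicit form \eqref{E:INVERSEACOUSTICALMETRIC} of $g^{-1}$. For the second-derivative piece, a direct computation based on $\Transport = \partial_t + v^a \partial_a$ gives the identity
\begin{align*}
	- \Transport^{\alpha} \Transport^{\beta} \partial_{\alpha} \partial_{\beta} \phi
	= - \Transport \Transport \phi + (\Transport v^a) \partial_a \phi,
\end{align*}
so that $(g^{-1})^{\alpha \beta} \partial_{\alpha} \partial_{\beta} \phi = - \Transport \Transport \phi + (\Transport v^a) \partial_a \phi + \Speed^2 \delta^{ab} \partial_a \partial_b \phi$. For the coefficient piece, I compute $\partial_{\alpha}(g^{-1})^{\alpha 0} = -\partial_a v^a$ and $\partial_{\alpha}(g^{-1})^{\alpha i} = -\Transport v^i - (\partial_a v^a) v^i + 2\Speed \delta^{ia} \partial_a \Speed$. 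Contracting against $\partial_{\beta} \phi$ and adding, the $(\Transport v^a) \partial_a \phi$ contributions cancel and one obtains
\begin{align*}
	\partial_{\alpha}\bigl[(g^{-1})^{\alpha \beta}\partial_{\beta}\phi\bigr]
	= -\Transport \Transport \phi + \Speed^2 \delta^{ab} \partial_a \partial_b \phi
		- (\partial_a v^a) \Transport \phi
		+ 2 \Speed \delta^{ia} (\partial_a \Speed) \partial_i \phi.
\end{align*}

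Finally, I will use the chain rule $\partial_{\alpha} \Speed = \Speed_{;\Densrenormalized} \partial_{\alpha} \Densrenormalized + \Speed_{;\Ent} \partial_{\alpha} \Ent$ on both the $2\Speed (\partial_a \Speed) \delta^{ai} \partial_i \phi$ term and the $-3 \Speed^{-1}(\partial_{\alpha}\Speed) \cdots$ term, and then combine them via the elementary identity
\begin{align*}
	\Speed^2 \delta^{ab} (\partial_a f)(\partial_b \phi)
	= (g^{-1})^{\alpha \beta} (\partial_{\alpha} f)(\partial_{\beta} \phi)
	+ (\Transport f)(\Transport \phi),
\end{align*}
valid for any scalar $f$ by \eqref{E:INVERSEACOUSTICALMETRIC}. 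Applied to $f = \Densrenormalized$ and to $f = \Ent$, this rearranges the pair of $c_{;\Densrenormalized}$ terms into $2\Speed^{-1}\Speed_{;\Densrenormalized}(\Transport\Densrenormalized)\Transport\phi - \Speed^{-1}\Speed_{;\Densrenormalized}(g^{-1})^{\alpha\beta}(\partial_{\alpha}\Densrenormalized)\partial_{\beta}\phi$, and the pair of $\Speed_{;\Ent}$ terms into $-\Speed \Speed_{;\Ent} \GradEnt^a \partial_a \phi + 3 \Speed^{-1}\Speed_{;\Ent}(\Transport \Ent)\Transport \phi$; collecting then produces exactly \eqref{E:COVARIANTWVAEOPERATORINRECTANGULARCOORDINATES}.

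The computation is essentially mechanical; there is no conceptual obstacle. The only place where one has to be careful is the bookkeeping that reorganizes the two naturally-appearing $\Speed$-derivative groupings (one from the divergence of $g^{-1}$, one from the $\sqrt{|\det g|}$ factor) into the specific combination $-\Speed^{-1}\Speed_{;\Densrenormalized}(g^{-1})^{\alpha\beta}\partial_\alpha\Densrenormalized \partial_\beta \phi + 2\Speed^{-1}\Speed_{;\Densrenormalized}(\Transport\Densrenormalized)\Transport\phi$ that is most convenient for subsequent analysis of null structure, since this combination isolates a manifest $g$-null form together with a $\Transport$-derivative term that can be treated by the transport machinery of Sect.~\ref{SS:MAINTHEOREMTRANSPORTEQUATIONS}. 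I keep the $\Transport \Ent$ term in the statement (rather than dropping it via \eqref{E:ENTROPYTRANSPORT}) because the identity is an identity of differential operators acting on an arbitrary scalar $\phi$, independent of whether the fluid equations are imposed.
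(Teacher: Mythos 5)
Your proposal is correct, and it follows essentially the same strategy as the paper's proof: compute $\det g = -\Speed^{-6}$, then expand the divergence in Cartesian coordinates using the explicit form of $g^{-1}$, and finally repackage the $\Speed$-derivative terms using the identity $(g^{-1})^{\alpha\beta}(\partial_\alpha f)\partial_\beta\phi = -(\Transport f)(\Transport\phi)+\Speed^2\delta^{ab}(\partial_a f)\partial_b\phi$. The only cosmetic difference is in the product-rule bookkeeping: the paper keeps $\sqrt{|\det g|}\,g^{-1} = -\Speed^{-3}\Transport\otimes\Transport + \Speed^{-1}\sum_a\partial_a\otimes\partial_a$ bundled as a single factor (its intermediate formula \eqref{E:FIRSTFORMULACOVARIANTWVAEOPERATORINRECTANGULARCOORDINATES}), so the $\Speed_{;\Ent}$ terms land directly in their final form while only the $\Speed_{;\Densrenormalized}$ terms need a final regrouping; you instead pull the $\Speed^{-3}$ factor out immediately, obtaining $\partial_\alpha[(g^{-1})^{\alpha\beta}\partial_\beta\phi] - 3\Speed^{-1}(\partial_\alpha\Speed)(g^{-1})^{\alpha\beta}\partial_\beta\phi$ and then regroup both the $\Speed_{;\Densrenormalized}$ and $\Speed_{;\Ent}$ terms symmetrically. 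Both routes lead to the same cancellation of the $(\Transport v^a)\partial_a\phi$ terms and the same final expression; the difference is purely organizational.
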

\begin{proof}
It is straightforward to compute using equations 
\eqref{E:ACOUSTICALMETRIC}-\eqref{E:INVERSEACOUSTICALMETRIC}
that relative to Cartesian coordinates, we have
\begin{align} \label{E:DETG}
	\mbox{\upshape det} g
	& = - \Speed^{-6}
\end{align}
and hence
\begin{align} \label{E:ROOTDETGTIMESGINVERSE}
	\sqrt{|\mbox{\upshape det} g|} g^{-1} 
	& = 
		- \Speed^{-3} \Transport \otimes \Transport
		+
		\Speed^{-1} \sum_{a=1}^3 \partial_a \otimes \partial_a.
\end{align}
Using 
definition \eqref{E:ENTROPYGRADIENT}
and equations
\eqref{E:WAVEOPERATORARBITRARYCOORDINATES},
\eqref{E:DETG}, 
and
\eqref{E:ROOTDETGTIMESGINVERSE}, 
we compute that
\begin{align} \label{E:FIRSTFORMULACOVARIANTWVAEOPERATORINRECTANGULARCOORDINATES}
	\square_g \phi
	 & = 
	- 
	\Speed^3
	\left(
		\Transport^{\alpha} \partial_{\alpha} (\Speed^{-3})
	\right)
	\Transport^{\beta} \partial_{\beta} \phi
	- 
	(\partial_{\alpha} \Transport^{\alpha}) \Transport^{\beta} \partial_{\beta} \phi
	- 
	(\Transport^{\alpha} \partial_{\alpha} \Transport^{\beta}) \partial_{\beta} \phi
		\\
&  \ \
	-
	\Transport^{\alpha} \Transport^{\beta} \partial_{\alpha} \partial_{\beta}
	\phi
	+ 
	\Speed^2 \delta^{ab} \partial_a \partial_b \phi
	- 
	\Speed \Speed_{;\Densrenormalized} \delta^{ab} (\partial_a \Densrenormalized) \partial_b \phi
	- 
	\Speed \Speed_{;\Ent} \GradEnt^a \partial_a \phi.
	\notag
\end{align}
Finally, from \eqref{E:FIRSTFORMULACOVARIANTWVAEOPERATORINRECTANGULARCOORDINATES},
the expression \eqref{E:MATERIALVECTORVIELDRELATIVETORECTANGULAR} for $\Transport$, 
the expression \eqref{E:INVERSEACOUSTICALMETRIC} for $g^{-1}$,
and simple calculations, we arrive at \eqref{E:COVARIANTWVAEOPERATORINRECTANGULARCOORDINATES}.
\end{proof}

In the next lemma, we derive equation \eqref{E:RENORMALIZEDDENSITYWAVEEQUATION}.

\begin{lemma}[\textbf{Wave equation for} $\Densrenormalized$]
\label{L:WAVEEQUATIONFORLOGDENSITY}
The compressible Euler equations 
\eqref{E:TRANSPORTDENSRENORMALIZEDRELATIVETORECTANGULAR}-\eqref{E:ENTROPYTRANSPORT}
imply the following covariant wave equation for the
logarithmic density variable $\Densrenormalized$
from Def.~\ref{D:RESCALEDVARIABLES},
where $\DivofEntrenormalized$ is the modified fluid variable
from Def.~\ref{D:RENORMALIZEDCURLOFSPECIFICVORTICITY}:
\begin{align} \label{E:PROOFRENORMALIZEDDENSITYWAVEEQUATION}
\square_g \Densrenormalized 
& = -
		\exp(\Densrenormalized) \frac{p_{;\Ent}}{\bar{\varrho}} \DivofEntrenormalized
		- 
		3 \Speed^{-1} \Speed_{;\Densrenormalized} 
		(g^{-1})^{\alpha \beta} \partial_{\alpha} \Densrenormalized \partial_{\beta} \Densrenormalized
		+ 
		(\partial_a v^a) (\partial_b v^b)
		-
		(\partial_a v^b) \partial_b v^a
			\\
& \ \
		-
		2 \exp(-\Densrenormalized) \frac{p_{;\Ent;\Densrenormalized}}{\bar{\varrho}} \GradEnt^a \partial_a \Densrenormalized 
		-
		\exp(-\Densrenormalized) \frac{p_{;\Ent;\Ent}}{\bar{\varrho}} \delta_{ab} \GradEnt^a \GradEnt^b.
		\notag
\end{align}
\end{lemma}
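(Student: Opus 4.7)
The plan is to start from Lemma~\ref{L:COVARIANTWAVEOPRELATIVETOCARTESIAN}, which provides an explicit Cartesian-coordinate expression for $\square_g \phi$, and apply it to $\phi = \Densrenormalized$. First I would exploit equation \eqref{E:ENTROPYTRANSPORT} to eliminate the term $3\Speed^{-1}\Speed_{;\Ent}(\Transport\Ent)\Transport\Densrenormalized$ on RHS~\eqref{E:COVARIANTWVAEOPERATORINRECTANGULARCOORDINATES}, and use equation \eqref{E:TRANSPORTDENSRENORMALIZEDRELATIVETORECTANGULAR} (i.e.\ $\Transport\Densrenormalized = -\partial_a v^a$) to rewrite the terms $2\Speed^{-1}\Speed_{;\Densrenormalized}(\Transport\Densrenormalized)\Transport\Densrenormalized$ and $-(\partial_a v^a)\Transport\Densrenormalized$ as multiples of $(\partial_a v^a)(\partial_b v^b)$; the latter will produce the second null-form contribution on RHS~\eqref{E:PROOFRENORMALIZEDDENSITYWAVEEQUATION}.

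The main substantive step is computing $-\Transport\Transport\Densrenormalized$. Using \eqref{E:TRANSPORTDENSRENORMALIZEDRELATIVETORECTANGULAR} and the commutator identity $[\Transport,\partial_a] = (\partial_a v^b)\partial_b$ (immediate from \eqref{E:MATERIALVECTORVIELDRELATIVETORECTANGULAR}), I would write
\[
-\Transport\Transport\Densrenormalized = \Transport(\partial_a v^a) = \partial_a(\Transport v^a) - (\partial_a v^b)\partial_b v^a,
\]
which already yields the $-(\partial_a v^b)\partial_b v^a$ term on RHS~\eqref{E:PROOFRENORMALIZEDDENSITYWAVEEQUATION}. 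Next I substitute the velocity evolution equation \eqref{E:TRANSPORTVELOCITYRELATIVETORECTANGULAR} for $\Transport v^a$ and expand $\partial_a[-\Speed^2\delta^{ab}\partial_b\Densrenormalized -\exp(-\Densrenormalized)(p_{;\Ent}/\bar{\varrho})\delta^{ab}\partial_b\Ent]$ using the chain rule. The $-\partial_a(\Speed^2\delta^{ab}\partial_b\Densrenormalized)$ piece produces an explicit $-\Speed^2\delta^{ab}\partial_a\partial_b\Densrenormalized$ that exactly cancels the corresponding term on RHS~\eqref{E:COVARIANTWVAEOPERATORINRECTANGULARCOORDINATES}, together with quadratic terms in $\partial\Densrenormalized$ and $\GradEnt^a\partial_a\Densrenormalized$ involving $\Speed_{;\Densrenormalized}$ and $\Speed_{;\Ent}$; the $-\partial_a[\exp(-\Densrenormalized)(p_{;\Ent}/\bar{\varrho})\GradEnt^a]$ piece produces a $\Flatdiv\GradEnt$ term plus lower-order contributions from $\partial_a\Densrenormalized$ and $\partial_a\Ent = \GradEnt^a$.

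At this point there are three algebraic reductions to apply. First, I invoke the definition \eqref{E:RENORMALIZEDDIVOFENTROPY}, rewritten as
\[
\Flatdiv\GradEnt \;=\; \exp(2\Densrenormalized)\DivofEntrenormalized + \GradEnt^a\partial_a\Densrenormalized,
\]
which is exactly what is needed to convert $\Flatdiv\GradEnt$ into the desired $-\exp(\Densrenormalized)(p_{;\Ent}/\bar{\varrho})\DivofEntrenormalized$ term and to cancel a spurious $\GradEnt^a\partial_a\Densrenormalized$ contribution. Second, I use the identity
\[
\Speed^2\delta^{ab}(\partial_a\Densrenormalized)\partial_b\Densrenormalized
= (g^{-1})^{\alpha\beta}(\partial_\alpha\Densrenormalized)\partial_\beta\Densrenormalized + (\Transport\Densrenormalized)^2,
\]
a direct consequence of \eqref{E:INVERSEACOUSTICALMETRIC}, to convert the Euclidean quadratic gradient pieces (from the $\Speed_{;\Densrenormalized}$ expansion) into the covariant $(g^{-1})^{\alpha\beta}$ form appearing on RHS~\eqref{E:PROOFRENORMALIZEDDENSITYWAVEEQUATION}; the residual $(\Transport\Densrenormalized)^2$ combines with the $(\partial_a v^a)^2$ contributions identified in the first paragraph, and the coefficients are arranged so as to yield exactly the factor $-3\Speed^{-1}\Speed_{;\Densrenormalized}$. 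Third, I use the chain rule identity
\[
2\Speed\Speed_{;\Ent} \;=\; \bar{\varrho}^{-1}\exp(-\Densrenormalized) p_{;\Ent;\Densrenormalized},
\]
obtained by differentiating \eqref{E:SOUNDSPEED} with respect to $\Ent$ (and invoking equality of mixed partials as in Notation~\ref{N:STATESPACEDIFFERENTIATION}), to convert all remaining $\Speed\Speed_{;\Ent}$ terms into the form $\exp(-\Densrenormalized)(p_{;\Ent;\Densrenormalized}/\bar{\varrho})$ appearing in $\mathfrak{L}_{(\Densrenormalized)}$.

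The argument involves no conceptual difficulty; the only obstacle is careful bookkeeping to track the numerous intermediate terms and verify that everything except those listed on RHS~\eqref{E:PROOFRENORMALIZEDDENSITYWAVEEQUATION} cancels. In particular, one must be attentive to the three independent sources of $\GradEnt^a\partial_a\Densrenormalized$ contributions (from the $-\Speed\Speed_{;\Ent}\GradEnt^a\partial_a\phi$ term in $\square_g$ applied to $\phi=\Densrenormalized$, from the expansion of $-\partial_a(\Speed^2\partial_a\Densrenormalized)$ using $\partial_a\Ent = \GradEnt^a$, and from differentiating the coefficient $\exp(-\Densrenormalized)p_{;\Ent}/\bar{\varrho}$ in the velocity equation) and to combine them consistently using the chain-rule identity for $\Speed\Speed_{;\Ent}$.
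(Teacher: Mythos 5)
Your proposal follows the paper's proof essentially step by step: apply Lemma~\ref{L:COVARIANTWAVEOPRELATIVETOCARTESIAN} with $\phi=\Densrenormalized$, use \eqref{E:ENTROPYTRANSPORT} and \eqref{E:TRANSPORTDENSRENORMALIZEDRELATIVETORECTANGULAR} to simplify, compute $-\Transport\Transport\Densrenormalized$ by commuting $\Transport$ through $\partial_a$ and substituting \eqref{E:TRANSPORTVELOCITYRELATIVETORECTANGULAR}, then apply the same three algebraic reductions (the rewriting of $\Flatdiv\GradEnt$ via the definition of $\DivofEntrenormalized$, the conversion of $\Speed^2\delta^{ab}\partial_a\Densrenormalized\partial_b\Densrenormalized$ to $(g^{-1})^{\alpha\beta}$ form, and the chain-rule identity for $\Speed\Speed_{;\Ent}$), which are precisely the ones the paper uses. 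One small slip worth noting: you quote the commutator as $[\Transport,\partial_a]=(\partial_a v^b)\partial_b$, whereas the correct sign is $[\Transport,\partial_a]=-(\partial_a v^b)\partial_b$; your subsequent displayed formula $\Transport(\partial_a v^a)=\partial_a(\Transport v^a)-(\partial_a v^b)\partial_b v^a$ is nonetheless correct, so this is only a typo in the prose and does not affect the argument.
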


\begin{proof}
First, using \eqref{E:COVARIANTWVAEOPERATORINRECTANGULARCOORDINATES}
with $\phi = \Densrenormalized$,
equation \eqref{E:TRANSPORTDENSRENORMALIZEDRELATIVETORECTANGULAR},
and equation \eqref{E:ENTROPYTRANSPORT} 
(which implies that the last product on RHS~\eqref{E:COVARIANTWVAEOPERATORINRECTANGULARCOORDINATES} vanishes),
we compute that
\begin{align} \label{E:FIRSTCOMPUTATIONRENORMALIZEDDENSITYWAVEEQUATION}
	\square_g \Densrenormalized
	& = - \Transport \Transport \Densrenormalized
		+ 
		\Speed^2 \delta^{ab} \partial_a \partial_b \Densrenormalized
		+ 
		2 \Speed^{-1} \Speed_{;\Densrenormalized} (\Transport \Densrenormalized)^2
		+ 
		(\partial_a v^a)^2
		- 
		\Speed^{-1} \Speed_{;\Densrenormalized} 
			(g^{-1})^{\alpha \beta} 
			\partial_{\alpha} \Densrenormalized \partial_{\beta} \Densrenormalized
		- 
		\Speed \Speed_{;\Ent} \GradEnt^a \partial_a \Densrenormalized.
\end{align}
Next, we use
definitions \eqref{E:ENTROPYGRADIENT} and \eqref{E:MATERIALVECTORVIELDRELATIVETORECTANGULAR},
equations
\eqref{E:TRANSPORTDENSRENORMALIZEDRELATIVETORECTANGULAR}-\eqref{E:ENTROPYTRANSPORT},
the chain rule identity
$
\displaystyle
2 \Speed \Speed_{;\Ent} 
= (\Speed^2)_{;\Ent} 
= \left(\frac{1}{\bar{\varrho}} \exp(-\Densrenormalized) p_{;\Densrenormalized} \right)_{;\Ent}
= \exp(-\Densrenormalized) \frac{p_{;\Densrenormalized;\Ent}}{\bar{\varrho}}
$,
and the identity 
$
p_{;\Densrenormalized;\Ent} 
= 
p_{;\Ent;\Densrenormalized}
$
to compute that
\begin{align} \label{E:TWOTRANSPORTAPPLIEDTORENORMALIZEDDENSITYEXPRESSION}
	\Transport \Transport \Densrenormalized
	& = - \partial_a (\Transport v^a)
		+ (\partial_a v^b) \partial_b v^a 
			\\
	& = 
		\Speed^2 \delta^{ab} \partial_a \partial_b \Densrenormalized
		+
		\delta^{ab} (\partial_a \Speed^2)  \partial_b \Densrenormalized
		+
		\partial_a
		\left\lbrace
			\exp(-\Densrenormalized) \frac{p_{;\Ent}}{\bar{\varrho}} \delta^{ab} \partial_b \Ent
		\right\rbrace
		+ 
		(\partial_a v^b) \partial_b v^a
			\notag \\
	& = 
		\Speed^2 \delta^{ab} \partial_a \partial_b \Densrenormalized
		+
		\exp(-\Densrenormalized) \frac{p_{;\Ent}}{\bar{\varrho}} \delta^{ab} \partial_a \partial_b \Ent
		+
		2 \Speed \Speed_{;\Densrenormalized} \delta^{ab} \partial_a \Densrenormalized \partial_b \Densrenormalized
			\notag 
				\\
	& \ \
		+
		2 \Speed \Speed_{;\Ent} \delta^{ab} \partial_a \Densrenormalized \partial_b \Ent
		- 
		\exp(-\Densrenormalized) \frac{p_{;\Ent}}{\bar{\varrho}} \delta^{ab} \partial_a \Densrenormalized \partial_b \Ent
		+
		\exp(-\Densrenormalized) \frac{p_{;\Ent;\Densrenormalized}}{\bar{\varrho}} \delta^{ab} \partial_a \Densrenormalized \partial_b \Ent
			\notag \\
	& \ \
		+
		\exp(-\Densrenormalized) \frac{p_{;\Ent;\Ent}}{\bar{\varrho}} \delta^{ab} \partial_a \Ent \partial_b \Ent
		+
		(\partial_a v^b) \partial_b v^a
		\notag
			\\
	& = 
		\Speed^2 \delta^{ab} \partial_a \partial_b \Densrenormalized
		+
		\exp(-\Densrenormalized) \frac{p_{;\Ent}}{\bar{\varrho}} \Flatdiv \GradEnt
		+
		2 \Speed \Speed_{;\Densrenormalized} \delta^{ab} \partial_a \Densrenormalized \partial_b \Densrenormalized
		+
		(\partial_a v^b) \partial_b v^a
			\notag 
				\\
	& \ \
		+
		2 \exp(-\Densrenormalized) \frac{p_{;\Ent;\Densrenormalized}}{\bar{\varrho}} \GradEnt^a \partial_a \Densrenormalized 
		- 
		\exp(-\Densrenormalized) \frac{p_{;\Ent}}{\bar{\varrho}} \GradEnt^a \partial_a \Densrenormalized
		+
		\exp(-\Densrenormalized) \frac{p_{;\Ent;\Ent}}{\bar{\varrho}} \delta_{ab} \GradEnt^a \GradEnt^b.
		\notag
\end{align}
Finally, using \eqref{E:TWOTRANSPORTAPPLIEDTORENORMALIZEDDENSITYEXPRESSION} 
to substitute for the term
$- \Transport \Transport \Densrenormalized$ on RHS~\eqref{E:FIRSTCOMPUTATIONRENORMALIZEDDENSITYWAVEEQUATION},
using the identity
$
\Transport \Densrenormalized \Transport \Densrenormalized
-
\Speed^2 \delta^{ab} \partial_a \Densrenormalized \partial_b \Densrenormalized
= - (g^{-1})^{\alpha \beta} \partial_{\alpha} \Densrenormalized \partial_{\beta} \Densrenormalized
$
(see \eqref{E:INVERSEACOUSTICALMETRIC}),
and using definition \eqref{E:RENORMALIZEDDIVOFENTROPY}
to algebraically substitute for the factor $\Flatdiv \GradEnt$ on RHS~\eqref{E:TWOTRANSPORTAPPLIEDTORENORMALIZEDDENSITYEXPRESSION},
we arrive at the desired expression 
\eqref{E:PROOFRENORMALIZEDDENSITYWAVEEQUATION}.
\end{proof}

We now establish equation \eqref{E:VELOCITYWAVEEQUATION}.

\begin{lemma}[\textbf{Wave equation for} $v^i$]
\label{L:WAVEEQUATIONFORV}
The compressible Euler equations 
\eqref{E:TRANSPORTDENSRENORMALIZEDRELATIVETORECTANGULAR}-\eqref{E:ENTROPYTRANSPORT}
imply the following covariant wave equation for the
scalar-valued functions $v^i$, $(i=1,2,3)$,
where $\lbrace \CurlofVortrenormalized^i \rbrace_{i=1,2,3}$ are 
the Cartesian components of the modified fluid variable
from Def.~\ref{D:RENORMALIZEDCURLOFSPECIFICVORTICITY}:
\begin{align} \label{E:PROOFVELOCITYWAVEEQUATION}
\square_g v^i
& = 	- 
			\Speed^2 \exp(2\Densrenormalized) \CurlofVortrenormalized^i 
			- 
			\left\lbrace
				1+\Speed^{-1} \Speed'
			\right\rbrace
			(g^{-1})^{\alpha \beta} \partial_{\alpha} \Densrenormalized \partial_{\beta} v^i
				\\
& \ \
		+ 
		2 \exp(\Densrenormalized) \epsilon_{iab} (\Transport v^a) \Vortrenormalized^b
		-
		\frac{p_{;\Ent}}{\bar{\varrho}} \epsilon_{iab} \Vortrenormalized^a \GradEnt^b
		 \notag \\
& \ \
		- 
		\exp(-\Densrenormalized) \frac{p_{;\Ent}}{\bar{\varrho}} \GradEnt^a \partial_a v^i
		- 
		\frac{1}{2} \exp(-\Densrenormalized) \frac{p_{;\Densrenormalized;\Ent}}{\bar{\varrho}} \GradEnt^a \partial_a v^i
		\notag \\
& \ \
		- 
		2 \exp(-\Densrenormalized) \Speed^{-1} \Speed_{;\Densrenormalized} \frac{p_{;\Ent}}{\bar{\varrho}} 
		(\Transport \Densrenormalized) \GradEnt^i
		+
		\exp(-\Densrenormalized) \frac{p_{;\Ent;\Densrenormalized}}{\bar{\varrho}} (\Transport \Densrenormalized) \GradEnt^i.
		\notag
\end{align}
\end{lemma}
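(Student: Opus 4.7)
The plan is to apply the general formula for $\square_g \phi$ from Lemma~\ref{L:COVARIANTWAVEOPRELATIVETOCARTESIAN} with $\phi = v^i$, compute the two second-order expressions $-\Transport \Transport v^i$ and $\Speed^2 \delta^{ab} \partial_a \partial_b v^i$ appearing on RHS~\eqref{E:COVARIANTWVAEOPERATORINRECTANGULARCOORDINATES}, and then recombine the results using definition \eqref{E:RENORMALIZEDCURLOFSPECIFICVORTICITY} of the modified fluid variable $\CurlofVortrenormalized^i$. For the spatial Laplacian term I would use the standard Euclidean vector identity
\begin{align*}
\delta^{ab} \partial_a \partial_b v^i
= \partial_i \Flatdiv v - (\Flatcurl \omega)^i,
\end{align*}
then substitute $\Flatdiv v = - \Transport \Densrenormalized$ via \eqref{E:TRANSPORTDENSRENORMALIZEDRELATIVETORECTANGULAR} and $\omega^i = \exp(\Densrenormalized) \Vortrenormalized^i$ via \eqref{E:VORTICIITYRENORMALIZEDVORTICITYANDDENSITYRELATION}. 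This yields
$
(\Flatcurl \omega)^i = \exp(\Densrenormalized)(\Flatcurl \Vortrenormalized)^i + \exp(\Densrenormalized) \epsilon_{iab} (\partial_a \Densrenormalized) \Vortrenormalized^b,
$
and Def.~\ref{D:RENORMALIZEDCURLOFSPECIFICVORTICITY} then lets me trade $\Speed^2 \exp(\Densrenormalized)(\Flatcurl \Vortrenormalized)^i$ for $\Speed^2 \exp(2\Densrenormalized) \CurlofVortrenormalized^i$ modulo explicit $\GradEnt$-dependent lower-order terms.

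For $\Transport \Transport v^i$, I would substitute $\Transport v^i$ from \eqref{E:TRANSPORTVELOCITYRELATIVETORECTANGULAR} and apply $\Transport$ once more. The key technical inputs are the commutator identity $[\Transport, \partial_i] f = -(\partial_i v^a) \partial_a f$, the chain rule for $\Speed^2 = \Speed^2(\Densrenormalized,\Ent)$ and for $\exp(-\Densrenormalized) p_{;\Ent}/\bar{\varrho}$, and the fact that $\Transport \Ent = 0$ by \eqref{E:ENTROPYTRANSPORT}, which eliminates every $\Transport \Ent$ contribution. The remaining factor $\Transport \GradEnt^i$ is then replaced using the already-established equation \eqref{E:PROOFGRADENTROPYTRANSPORT}.

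Assembling the pieces, the awkward second-order term $\Speed^2 \partial_i \Transport \Densrenormalized$ that emerges from the Laplacian computation is exactly cancelled by the counterpart $-\Speed^2 \partial_i \Transport \Densrenormalized$ produced inside $\Transport \Transport v^i$; I view this as the main structural cancellation underlying the appearance of a clean wave operator on the LHS. Collecting the surviving first-order terms, the coefficient of the null form $(g^{-1})^{\alpha \beta} \partial_{\alpha} \Densrenormalized \partial_{\beta} v^i$ becomes $-(1 + \Speed^{-1} \Speed_{;\Densrenormalized})$ after combining the explicit null form already present in \eqref{E:COVARIANTWVAEOPERATORINRECTANGULARCOORDINATES} with the cross term produced by the commutator $[\Transport, \partial_i] \Densrenormalized = -(\partial_i v^a) \partial_a \Densrenormalized$ and by using $\Transport \Densrenormalized \Transport v^i - \Speed^2 \delta^{ab} (\partial_a \Densrenormalized)(\partial_b v^i) = -(g^{-1})^{\alpha\beta} \partial_\alpha \Densrenormalized \partial_\beta v^i$. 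The vorticity-velocity interaction $2 \exp(\Densrenormalized) \epsilon_{iab} (\Transport v^a) \Vortrenormalized^b$ and the $\epsilon_{iab} \Vortrenormalized^a \GradEnt^b$ contribution then emerge by rewriting the antisymmetric combinations $(\partial_a v^b - \partial_b v^a)$ arising in the Laplacian step in terms of $\omega$ and invoking \eqref{E:VORTICIITYRENORMALIZEDVORTICITYANDDENSITYRELATION}.

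The hard part of the argument is not any single manipulation but rather the bookkeeping: tracking the roughly ten entropy-related linear terms involving $p_{;\Ent}$, $p_{;\Ent;\Densrenormalized}$, $\Speed_{;\Densrenormalized}$, $\Transport \Densrenormalized$, and $\GradEnt^i$ that sprout from differentiating the coefficients $\Speed^2$ and $\exp(-\Densrenormalized) p_{;\Ent}/\bar{\varrho}$ a second time, and verifying that they collapse into exactly the six inhomogeneities listed on the second through fourth lines of RHS~\eqref{E:PROOFVELOCITYWAVEEQUATION} with no unexpected residual. The $\Transport \GradEnt^i$ substitution in particular must be handled with care, since it feeds a further $\epsilon_{iab} \Vortrenormalized^a \GradEnt^b$ term that must be tracked through the chain-rule expansion before it combines cleanly with the analogous contribution coming from $\partial_i(p_{;\Ent}/\bar{\varrho}) \GradEnt^i$.
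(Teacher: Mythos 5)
Your proposal is correct and amounts to the same calculation as the paper's proof, reorganized only in where the Helmholtz identity is applied. The paper produces $\Speed^2\delta^{ab}\partial_a\partial_b v^i$ \emph{inside} $\Transport\Transport v^i$ (by writing $\Speed^2\delta^{ia}\partial_a\partial_b v^b = \Speed^2\delta^{ab}\partial_a\partial_b v^i + \Speed^2\delta^{ia}\partial_d(\partial_a v^d - \partial_d v^a)$ after substituting $\Transport\Densrenormalized = -\partial_b v^b$ and commuting $\Transport$ past $\partial_a$), and then lets this Laplacian cancel against the explicit one in \eqref{E:COVARIANTWVAEOPERATORINRECTANGULARCOORDINATES}. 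You instead apply $\delta^{ab}\partial_a\partial_b v^i = \partial_i\Flatdiv v - (\Flatcurl\omega)^i$ on the Laplacian side, substitute $\Flatdiv v = -\Transport\Densrenormalized$, and cancel the resulting $\pm\Speed^2\partial_i\Transport\Densrenormalized$ pair. These are the same cancellation expressed from opposite sides, and both ultimately leave the residual $-\Speed^2(\Flatcurl\omega)^i$, which via $\omega = \exp(\Densrenormalized)\Vortrenormalized$ and Def.~\ref{D:RENORMALIZEDCURLOFSPECIFICVORTICITY} produces $-\Speed^2\exp(2\Densrenormalized)\CurlofVortrenormalized^i$ plus the displayed $\GradEnt$-dependent terms. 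Your remaining steps (commutator $[\Transport,\partial_i]$, chain rule on the $(\Densrenormalized,\Ent)$-dependent coefficients with $\Transport\Ent \equiv 0$ killing the $\Ent$-derivative branch, substitution of $\Transport\GradEnt^i$ via \eqref{E:PROOFGRADENTROPYTRANSPORT}, antisymmetrization into $\omega$, and the identity $\Transport\Densrenormalized\,\Transport v^i - \Speed^2\delta^{ab}\partial_a\Densrenormalized\,\partial_b v^i = -(g^{-1})^{\alpha\beta}\partial_\alpha\Densrenormalized\,\partial_\beta v^i$ producing the $-1$ in the coefficient $-(1 + \Speed^{-1}\Speed_{;\Densrenormalized})$) match the paper's manipulations exactly. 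The one detail you leave slightly implicit is the final step: after substituting Def.~\ref{D:RENORMALIZEDCURLOFSPECIFICVORTICITY}, the term $-\exp(-\Densrenormalized)\frac{p_{;\Ent}}{\bar{\varrho}}(\partial_a v^a)\GradEnt^i$ that appears must itself be converted via \eqref{E:TRANSPORTDENSRENORMALIZEDRELATIVETORECTANGULAR} into $+\exp(-\Densrenormalized)\frac{p_{;\Ent}}{\bar{\varrho}}(\Transport\Densrenormalized)\GradEnt^i$ before the coefficients of $\GradEnt^a\partial_a v^i$ and $(\Transport\Densrenormalized)\GradEnt^i$ settle into the stated form — this is precisely the final substitution the paper carries out — but that is a bookkeeping point, not a gap.
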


\begin{proof}
First, we use equation \eqref{E:COVARIANTWVAEOPERATORINRECTANGULARCOORDINATES} with $\phi = v^i$, 
definition \eqref{E:ENTROPYGRADIENT},
equation \eqref{E:TRANSPORTVELOCITYRELATIVETORECTANGULAR},
and equation \eqref{E:ENTROPYTRANSPORT} 
(which implies that the last product on RHS~\eqref{E:COVARIANTWVAEOPERATORINRECTANGULARCOORDINATES} vanishes),
to compute
\begin{align} \label{E:FIRSTCOMPUTATIONVELOCITYWAVEEQUATION}
	\square_g v^i
	& = - 
			\Transport \Transport v^i
		+ 
		\Speed^2 \delta^{ab} \partial_a \partial_b v^i
		- 
		2 \Speed \Speed_{;\Densrenormalized} (\Transport \Densrenormalized) \delta^{ia} \partial_a \Densrenormalized
		- 
		2 \exp(-\Densrenormalized) \Speed^{-1} \Speed_{;\Densrenormalized} \frac{p_{;\Ent}}{\bar{\varrho}} 
			(\Transport \Densrenormalized) \GradEnt^i
			\\
	& \ \
		- 
		(\partial_a v^a) \Transport v^i
		- 
		\Speed^{-1} \Speed_{;\Densrenormalized} (g^{-1})^{\alpha \beta} (\partial_{\alpha} \Densrenormalized) \partial_{\beta} v^i
		- 
		\Speed \Speed_{;\Ent} \GradEnt^a \partial_a v^i.
		\notag
\end{align}
Next, we use definitions \eqref{E:ENTROPYGRADIENT} and \eqref{E:MATERIALVECTORVIELDRELATIVETORECTANGULAR},
equations \eqref{E:TRANSPORTDENSRENORMALIZEDRELATIVETORECTANGULAR}-\eqref{E:ENTROPYTRANSPORT},
and the already established equation \eqref{E:PROOFGRADENTROPYTRANSPORT} to compute that
\begin{align} \label{E:TWOTRANSPORTAPPLIEDTOVELOCITYEXPRESSION}
\Transport \Transport v^i
	& = - 
			\Speed^2 \delta^{ia} \Transport \partial_a \Densrenormalized
			- 
			2 \Speed \Speed_{;\Densrenormalized} \Transport \Densrenormalized \delta^{ia} \partial_a \Densrenormalized
				\\
	& \ \
			+ 
			\exp(-\Densrenormalized) \frac{p_{;\Ent}}{\bar{\varrho}} (\Transport \Densrenormalized) \GradEnt^i
			- 
			\exp(-\Densrenormalized) \frac{p_{;\Ent;\Densrenormalized}}{\bar{\varrho}} (\Transport \Densrenormalized) \GradEnt^i
			- 
			\exp(-\Densrenormalized) \frac{p_{;\Ent}}{\bar{\varrho}} \Transport \GradEnt^i
			\notag \\
	& = 
		- 
		\Speed^2 \delta^{ia} \partial_a (\Transport \Densrenormalized)
		+ 
		\Speed^2 \delta^{ia} (\partial_a v^b) \partial_b \Densrenormalized
		- 
		2 \Speed \Speed_{;\Densrenormalized} (\Transport \Densrenormalized) \delta^{ia} \partial_a \Densrenormalized
			\notag \\
	& \ \
		+ 
		\exp(-\Densrenormalized) \frac{p_{;\Ent}}{\bar{\varrho}} (\Transport \Densrenormalized) \GradEnt^i
		- 
		\exp(-\Densrenormalized) \frac{p_{;\Ent;\Densrenormalized}}{\bar{\varrho}} (\Transport \Densrenormalized) \GradEnt^i
		+ 
		\exp(-\Densrenormalized) \frac{p_{;\Ent}}{\bar{\varrho}} \GradEnt^a \partial_a v^i
		-
		\epsilon_{iab} \frac{p_{;\Ent}}{\bar{\varrho}} \Vortrenormalized^a \GradEnt^b
			\notag \\
	& = 
			\Speed^2 \delta^{ia} \delta_d^b \partial_a (\partial_b v^d)
		- 
		\delta^{ia} \delta_{bd} (\partial_a v^b) \Transport v^d
		-
		\exp(-\Densrenormalized) \frac{p_{;\Ent}}{\bar{\varrho}}
		\delta^{ij} \delta_{ab} (\partial_j v^a) \GradEnt^b
		- 
		2 \Speed \Speed_{;\Densrenormalized} (\Transport \Densrenormalized) \delta^{ia} \partial_a \Densrenormalized
		\notag \\
	& \ \
		+ 
		\exp(-\Densrenormalized) \frac{p_{;\Ent}}{\bar{\varrho}} (\Transport \Densrenormalized) \GradEnt^i
		- 
		\exp(-\Densrenormalized) \frac{p_{;\Ent;\Densrenormalized}}{\bar{\varrho}} (\Transport \Densrenormalized) \GradEnt^i
		+ 
		\exp(-\Densrenormalized) \frac{p_{;\Ent}}{\bar{\varrho}} \GradEnt^a \partial_a v^i
		-
		\epsilon_{iab} \frac{p_{;\Ent}}{\bar{\varrho}} \Vortrenormalized^a \GradEnt^b
		\notag \\
	& = 
		\Speed^2 \delta^{ab} \partial_a \partial_b v^i
		+ 
		\Speed^2 \delta^{ia} \partial_d (\partial_a v^d - \partial_d v^a)
			\notag \\
	& \ \
		- 
		(\Transport v^a) \partial_a v^i 
		+
		(\Transport v^a)(\partial_a v^i - \partial_i v^a)
		\notag	\\
	& \ \
		+
		\exp(-\Densrenormalized) \frac{p_{;\Ent}}{\bar{\varrho}} \GradEnt^a (\partial_a v^i - \partial_i v^a)
		- 
		2 \Speed \Speed_{;\Densrenormalized} (\Transport \Densrenormalized) \delta^{ia} \partial_a \Densrenormalized
			\notag \\
	& \ \
		+ 
		\exp(-\Densrenormalized) \frac{p_{;\Ent}}{\bar{\varrho}} (\Transport \Densrenormalized) \GradEnt^i
		- 
		\exp(-\Densrenormalized) \frac{p_{;\Ent;\Densrenormalized}}{\bar{\varrho}} (\Transport \Densrenormalized) \GradEnt^i
		+ 
		\exp(-\Densrenormalized) \frac{p_{;\Ent}}{\bar{\varrho}} \GradEnt^a \partial_a v^i
		-
		\frac{p_{;\Ent}}{\bar{\varrho}} \epsilon_{iab}  \Vortrenormalized^a \GradEnt^b.
			\notag
\end{align}
Next, we use 
definitions \eqref{E:ENTROPYGRADIENT} and \eqref{E:VORTICITYDEFINITION},
the identity \eqref{E:VORTICIITYRENORMALIZEDVORTICITYANDDENSITYRELATION},
equation \eqref{E:TRANSPORTVELOCITYRELATIVETORECTANGULAR}, 
and the antisymmetry of $\epsilon_{\cdots}$
to derive the identities
\begin{align}
\Speed^2 \delta^{ia} \partial_d (\partial_a v^d - \partial_d v^a)
	& = \Speed^2 \epsilon_{iab} \partial_a \omega^b
	= \Speed^2 \Flatcurl \omega^i
		\label{E:FIRSTIDUSEDINDERIVINGVELOCITYWAVEEQN}
		\\
	& = \Speed^2 \exp(\Densrenormalized)  \Flatcurl \Vortrenormalized^i
		+
		\Speed^2 \exp(\Densrenormalized) 
		\epsilon_{iab} \Vortrenormalized^b \partial_a \Densrenormalized
	\notag \\
	& =
		\Speed^2 \exp(\Densrenormalized)  \Flatcurl \Vortrenormalized^i 
			-
			\exp(\Densrenormalized) \epsilon_{iab} (\Transport v^a) \Vortrenormalized^b
			+
			\frac{p_{;\Ent}}{\bar{\varrho}} \epsilon_{iab} \Vortrenormalized^a \GradEnt^b,
			\notag \\
	\exp(-\Densrenormalized) \frac{p_{;\Ent}}{\bar{\varrho}}
	\GradEnt^a (\partial_a v^i - \partial_i v^a)
	& = 
	\frac{p_{;\Ent}}{\bar{\varrho}} \epsilon_{bai} \GradEnt^a \Vortrenormalized^b
		= 
		\frac{p_{;\Ent}}{\bar{\varrho}} \epsilon_{iab} \Vortrenormalized^a \GradEnt^b,
		\label{E:SIMPLECURLID} 
		\\
(\Transport v^a)(\partial_a v^i - \partial_i v^a)
 & 
=  \exp(\Densrenormalized) \epsilon_{bai} (\Transport v^a) \Vortrenormalized^b 
= - \exp(\Densrenormalized) \epsilon_{iab} (\Transport v^a) \Vortrenormalized^b.
\label{E:SECONDIDUSEDINDERIVINGVELOCITYWAVEEQN}
\end{align}
Substituting the RHSs of 
\eqref{E:FIRSTIDUSEDINDERIVINGVELOCITYWAVEEQN}-\eqref{E:SECONDIDUSEDINDERIVINGVELOCITYWAVEEQN}
for the relevant terms on RHS~\eqref{E:TWOTRANSPORTAPPLIEDTOVELOCITYEXPRESSION},
we obtain
\begin{align} \label{E:TWOTRANSPORTAPPLIEDTOVELOCITYSECONDEXPRESSION}
	\Transport \Transport v^i
	& = \Speed^2 \delta^{ab} \partial_a \partial_b v^i
			+ 
			\Speed^2 \exp(\Densrenormalized) \Flatcurl \Vortrenormalized^i 
			-
			2 \exp(\Densrenormalized) \epsilon_{iab} (\Transport v^a) \Vortrenormalized^b
			+
			\frac{p_{;\Ent}}{\bar{\varrho}} \epsilon_{iab} \Vortrenormalized^a \GradEnt^b
			\\
	& \ \
		- (\Transport v^a) \partial_a v^i
		- 2 \Speed \Speed_{;\Densrenormalized} (\Transport \Densrenormalized) \delta^{ia} \partial_a \Densrenormalized
		\notag
			\\
	& \ \
		+ 
		\exp(-\Densrenormalized) \frac{p_{;\Ent}}{\bar{\varrho}} \GradEnt^a \partial_a v^i
		+ 
		\exp(-\Densrenormalized) \frac{p_{;\Ent}}{\bar{\varrho}} (\Transport \Densrenormalized) \GradEnt^i
		- 
		\exp(-\Densrenormalized) \frac{p_{;\Ent;\Densrenormalized}}{\bar{\varrho}} (\Transport \Densrenormalized) \GradEnt^i.
		\notag
\end{align}
Next, substituting $-$RHS~\eqref{E:TWOTRANSPORTAPPLIEDTOVELOCITYSECONDEXPRESSION}
for the term
$- \Transport \Transport v^i$ 
on RHS~\eqref{E:FIRSTCOMPUTATIONVELOCITYWAVEEQUATION},
and using the chain rule identity
$
\displaystyle
\Speed \Speed_{;\Ent} 
= \frac{1}{2} (\Speed^2)_{;\Ent} 
= \frac{1}{2} \left(\frac{1}{\bar{\varrho}} \exp(-\Densrenormalized) p_{;\Densrenormalized} \right)_{;\Ent}
= \frac{1}{2} \exp(-\Densrenormalized) \frac{p_{;\Densrenormalized;\Ent}}{\bar{\varrho}}
$,
we compute that
\begin{align}\label{first.thm.prelim.1}
\square_g v^i
& = - \Speed^2 \exp(\Densrenormalized)  (\Flatcurl \Vortrenormalized)^i 
			+
			2 \exp(\Densrenormalized) \epsilon_{iab} (\Transport v^a) \Vortrenormalized^b
			-
			\frac{p_{;\Ent}}{\bar{\varrho}} \epsilon_{iab} \Vortrenormalized^a \GradEnt^b
				\\
& \ \
			+
			\left\lbrace
				(\Transport v^a) \partial_a v^i
					-
				(\partial_a v^a) \Transport v^i
			\right\rbrace
			- \Speed^{-1} \Speed_{;\Densrenormalized} (g^{-1})^{\alpha \beta} \partial_{\alpha} \Densrenormalized \partial_{\beta} v^i
				\notag
				\\
& \ \
		- 
		\exp(-\Densrenormalized) \frac{p_{;\Ent}}{\bar{\varrho}} \GradEnt^a \partial_a v^i
		- 
		\frac{1}{2} \exp(-\Densrenormalized) \frac{p_{;\Densrenormalized;\Ent}}{\bar{\varrho}} \GradEnt^a \partial_a v^i
		\notag \\
& \ \
		- 
		2 \exp(-\Densrenormalized) \Speed^{-1} \Speed_{;\Densrenormalized} \frac{p_{;\Ent}}{\bar{\varrho}} 
		(\Transport \Densrenormalized) \GradEnt^i
		- 
		\exp(-\Densrenormalized) \frac{p_{;\Ent}}{\bar{\varrho}} (\Transport \Densrenormalized) \GradEnt^i
		+ 
		\exp(-\Densrenormalized) \frac{p_{;\Ent;\Densrenormalized}}{\bar{\varrho}} (\Transport \Densrenormalized) \GradEnt^i.
		\notag
\end{align}
To handle the terms $\lbrace \cdot \rbrace$ in \eqref{first.thm.prelim.1}, we use 
definitions \eqref{E:ENTROPYGRADIENT} and \eqref{E:INVERSEACOUSTICALMETRIC} 
and equations \eqref{E:TRANSPORTDENSRENORMALIZEDRELATIVETORECTANGULAR}-\eqref{E:TRANSPORTVELOCITYRELATIVETORECTANGULAR}
to obtain
\begin{align}\label{first.thm.prelim.2}
(\Transport v^a) \partial_a v^i
-
(\partial_a v^a) \Transport v^i
& =
	- 
	\Speed^2 \delta^{ab} (\partial_b \Densrenormalized) \partial_a v^i
	- 
	\exp(-\Densrenormalized) \frac{p_{;\Ent}}{\bar{\varrho}} \GradEnt^a \partial_a v^i
	+
	(\Transport \Densrenormalized) \Transport v^i
	\\
& = - \exp(-\Densrenormalized) \frac{p_{;\Ent}}{\bar{\varrho}} \GradEnt^a \partial_a v^i
		-(g^{-1})^{\alpha \beta} (\partial_{\alpha} \Densrenormalized) \partial_{\beta} v^i.
	\notag 
\end{align}
Next, substituting \eqref{first.thm.prelim.2} into \eqref{first.thm.prelim.1}, we 
deduce that
\begin{align} \label{E:ALMOSTPROOFVELOCITYWAVEEQUATION}
\square_g v^i
& = - \Speed^2 \exp(\Densrenormalized) (\Flatcurl \Vortrenormalized)^i 
			- 
			\left\lbrace
				1+\Speed^{-1} \Speed_{;\Densrenormalized}
			\right\rbrace
			(g^{-1})^{\alpha \beta} \partial_{\alpha} \Densrenormalized \partial_{\beta} v^i
				\\
& \ \
		+ 
		2 \exp(\Densrenormalized) \epsilon_{iab} (\Transport v^a) \Vortrenormalized^b
		-
		\frac{p_{;\Ent}}{\bar{\varrho}} \epsilon_{iab} \Vortrenormalized^a \GradEnt^b
		 \notag \\
& \ \
		- 
		2 \exp(-\Densrenormalized) \frac{p_{;\Ent}}{\bar{\varrho}} \GradEnt^a \partial_a v^i
		- 
		\frac{1}{2} \exp(-\Densrenormalized) \frac{p_{;\Densrenormalized;\Ent}}{\bar{\varrho}} \GradEnt^a \partial_a v^i
		\notag \\
& \ \
		- 
		2 \exp(-\Densrenormalized) \Speed^{-1} \Speed_{;\Densrenormalized} \frac{p_{;\Ent}}{\bar{\varrho}} 
		(\Transport \Densrenormalized) \GradEnt^i
		- 
		\exp(-\Densrenormalized) \frac{p_{;\Ent}}{\bar{\varrho}} (\Transport \Densrenormalized) \GradEnt^i
		+ 
		\exp(-\Densrenormalized) \frac{p_{;\Ent;\Densrenormalized}}{\bar{\varrho}} (\Transport \Densrenormalized) \GradEnt^i.
		\notag
\end{align}
Finally, we use equation \eqref{E:RENORMALIZEDCURLOFSPECIFICVORTICITY}
to algebraically substitute for the term
$(\Flatcurl \Vortrenormalized)^i$
on RHS~\eqref{E:ALMOSTPROOFVELOCITYWAVEEQUATION}
and equation \eqref{E:TRANSPORTDENSRENORMALIZEDRELATIVETORECTANGULAR}
to replace the term 
$
\displaystyle
- \exp(-3\Densrenormalized) \Speed^{-2} \frac{p_{;\Ent}}{\bar{\varrho}} (\partial_a v^a) \GradEnt^i
$ 
on RHS~\eqref{E:RENORMALIZEDCURLOFSPECIFICVORTICITY} with 
$
\displaystyle
\exp(-3\Densrenormalized) \Speed^{-2} \frac{p_{;\Ent}}{\bar{\varrho}} (\Transport \Densrenormalized) \GradEnt^i
$, 
which in total yields the desired equation \eqref{E:PROOFVELOCITYWAVEEQUATION}.

\end{proof}

\bibliographystyle{amsalpha}
\bibliography{JBib}

\end{document}